\pdfoutput=1
\documentclass[11pt]{article}

\usepackage[utf8]{inputenc}
\usepackage[T1]{fontenc}
\usepackage[margin=1in]{geometry}
\usepackage{microtype}
\usepackage{amsfonts,amsmath,amssymb,amsthm}
\usepackage{graphicx}
\usepackage{wrapfig}
\usepackage{xcolor}
\usepackage[round,authoryear]{natbib}
\usepackage[colorlinks=true,linkcolor=red!60!black,citecolor=blue!60!black,urlcolor=orange!70!black]{hyperref}
\usepackage[capitalize]{cleveref}
\usepackage{authblk}

\newtheorem{lemma}{Lemma}
\newtheorem{proposition}{Proposition}
\newtheorem*{proposition*}{Proposition}
\newtheorem{theorem}{Theorem}
\newtheorem*{theorem*}{Theorem}

\newcommand{\A}{\mathcal A}
\newcommand{\Bernoulli}{\operatorname{Ber}}
\newcommand{\crl}[1]{\left\{#1\right\}}
\newcommand{\dd}{\mathrm d}
\newcommand{\E}{\mathbb E}
\newcommand{\Ee}{\mathcal E}
\newcommand{\KL}[2]{\operatorname{KL}(#1 || #2)}
\newcommand{\N}{\mathcal N}
\newcommand{\Nn}{\mathbb N}
\newcommand{\one}{\mathbf{1}}
\newcommand{\PR}{\mathcal P}
\newcommand{\R}{\mathbb R}
\newcommand{\Sc}{\mathcal S}
\newcommand{\X}{\mathcal X}

\title{\vspace{-2em}Time-sensitive anytime-valid testing}

\author[$a$,$\ast$]{Eugenio Clerico}
\author[$b$,$\ast$]{Tobias Wegel}
\author[$a$]{Iskander Azangulov}
\author[$a$]{Patrick Rebeschini}

\affil[$a$]{\small Department of Statistics, University of Oxford}
\affil[$b$]{\small Department of Computer Science, ETH Z\"urich}

\date{}

\makeatletter
\renewcommand{\@maketitle}{%
  \newpage
  \null
  \begin{center}%
    {\huge \@title \par}%
    \vskip 2.5em%
    {\large
      \lineskip .5em%
      \begin{tabular}[t]{c}%
        \AB@authlist
      \end{tabular}\par}%
    \vskip .2em%
    {\small
      \begin{tabular}[t]{c}%
        \AB@affillist
      \end{tabular}\par}%
  \end{center}%
  \par
  \vskip 1.5em%
}
\makeatother

\begin{document}

\maketitle

\begingroup
\renewcommand\thefootnote{$\ast$}
\footnotetext{Contributed equally}
\endgroup

\begin{abstract}
\noindent Anytime-valid tests allow evidence to be checked during data collection: one can either continue testing or stop and reject the null while still controlling type-I error. Yet, in many applications rejection is useful only if it comes soon enough. We introduce a time-sensitive testing-by-betting framework that favours early rejection by assigning rewards to rejection times and maximising their expected value under a given alternative. This encompasses hard deadlines and softer time preferences. The resulting optimal control problem admits a Bellman representation in terms only of time and evidence against the null, rather than the full history. For hard deadlines,  the simple-vs-simple case reduces to a finite-horizon Neyman--Pearson problem and identify the corresponding optimal e-process. Furthermore, we show that exponentially decaying rewards admit a stationary approximation, yielding the exponential-decay-optimal (EDO) criterion: a finite-time-scale counterpart to the classical growth-rate-optimal (GRO) viewpoint in anytime-valid statistics, with the GRO criterion recovered in the large-time-scale limit.
\end{abstract}

\section{Introduction}
Classical hypothesis testing in statistics is built around fixed-sample designs: one specifies a sample size in advance, then collects the data and computes a p-value. However, this framework is often too rigid for modern settings, where data arrive sequentially and may be monitored continuously. Practitioners increasingly need the freedom to adaptively stop or continue testing based on real-time evidence. As a consequence, recent literature has focused on anytime-valid inference, developing tools, such as e-values and e-processes, that provide statistical guarantees which remain valid under optional stopping and continuation \citep{grunwald2024safe,ramdas2025hypothesis}. A concrete and intuitive instantiation of anytime-valid inference is the \emph{testing-by-betting} game \citep{ramdas2023game}, where a gambler starts with unit wealth and, at each round, places a bet against the null hypothesis. The game is structured to be fair (or  disadvantageous to the player) when the null is true, meaning that the gambler is highly unlikely to become rich. Consequently, accumulating significant wealth constitutes evidence against the null. The appeal of this perspective is that it turns sequential testing into an algorithmic problem: based on currently available observations, one needs to decide where and how aggressively to bet. In this sense, the statistical question naturally admits a sequential-control interpretation, creating a bridge towards dynamic programming and reinforcement learning.

To guide betting decisions in the testing-by-betting game, the statistical literature typically fixes an alternative hypothesis and chooses bets that maximise asymptotic wealth when that alternative holds. This is formalised by the growth-rate-optimal (GRO) criterion \citep{grunwald2024safe}, which in simple cases can recover classical ideas such as the Kelly criterion \citep{kelly1956new} and Wald's sequential likelihood-ratio test (SPRT) \citep{wald1945sequential}. This approach is natural and theoretically  justified, especially in idealised settings where no time or cost constraints affect the experiment. Yet, from a control perspective, long-run growth is only one possible objective, and in many practical applications  arguably not the most relevant one. Often, a guarantee of eventual rejection is insufficient, with practical considerations requiring the test to reject as early as possible. It is thus natural to think of the \emph{value} of a rejection as depreciating over time. Recent work has  tackled this or related issues by  focusing on finite-horizon settings, showing that when a fixed deadline is imposed, asymptotically optimal strategies can be strictly suboptimal \citep{voravcek2025star,taga2026learning}.

The starting point of our work is that the finite-horizon formulation addresses only one case within a fundamentally broader control problem.

Most sequential experiments do not naturally come with a single hard deadline, yet there are natural preferences over rejection times: earlier rejection is better, but later rejection may still retain some value. This observation suggests shifting the focus from a fixed horizon to a reward assigned directly to the rejection time. In this framework, the hard-deadline scenario is recovered in the special case where the reward is an indicator that vanishes after a given step. Alternative reward choices allow softer and more expressive formulations of time preference. Among valid betting strategies, we therefore propose to consider those that maximise the expected reward associated with the rejection time under the given alternative. \cref{fig:heatmaps-Gaussian} shows how, in a simple Gaussian testing problem, changing the reward yields different optimal betting strategies, hence inducing different distribution of rejection times. We dub our approach \emph{time-sensitive anytime-valid testing}, and view it as the natural control formulation of sequential testing when rejection times matter. In summary, the main conceptual contribution of this paper is to formulate a reward-based sequential decision problem that naturally encodes  preferences over the duration of the experiment, all while preserving anytime validity. For the sake of clarity, we develop our formulation for i.i.d.~testing-by-betting, although we expect it to  extend to more general statistical frameworks.

\begin{figure}[b]
    \vspace{-1.5em}
    \centering
    \includegraphics[width=\linewidth]{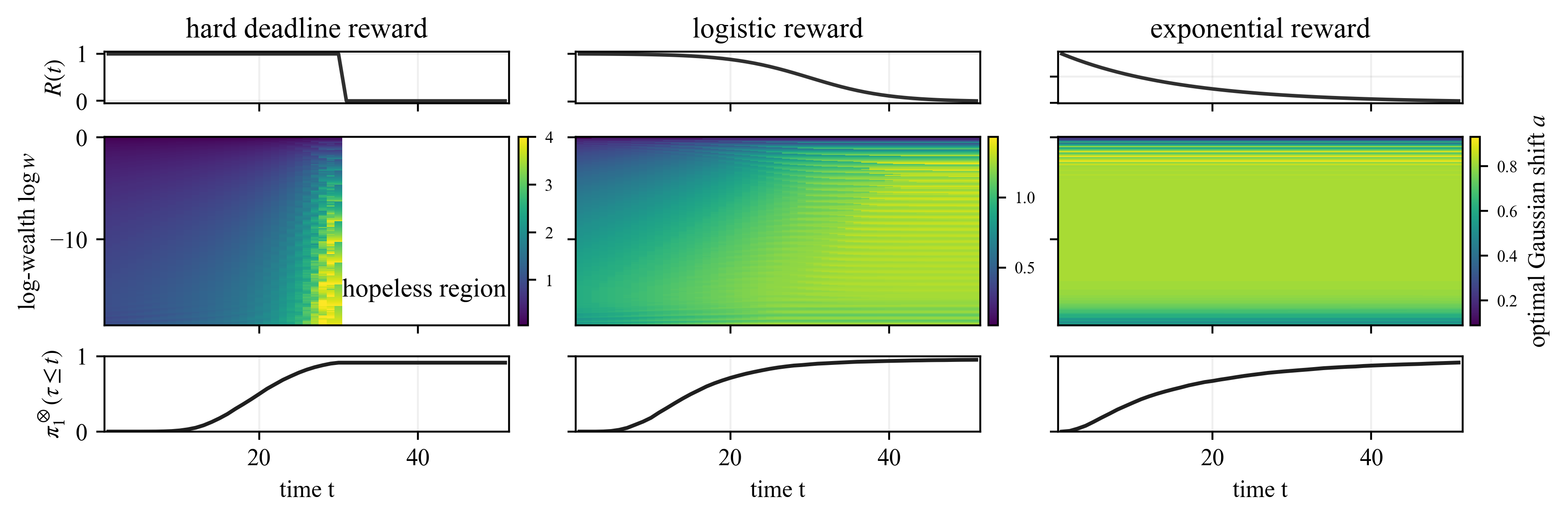}
    \caption{\small Heatmap of the optimal action $a$, representing the bet $E_{[a]}(x)=\exp\{ax-a^2/2\}$, as a function of time $t$ and log-wealth $\log w$ when testing $\pi_0=\N(0,1)$ against $\pi_1=\N(0.6,1)$ with $\alpha=.05$, under different  rewards. The corresponding stopping-time cdfs are shown below. Further details in \Cref{sec:app-fig-heatmaps-gaussian}.}
    \label{fig:heatmaps-Gaussian}

\end{figure}

The reward-based viewpoint that we introduce brings the control perspective to the forefront. Although testing by betting is already sequential in nature, assigning rewards to rejection times makes the choice of a betting strategy an explicit control problem over time and wealth. This viewpoint is in line with recent reinforcement-learning approaches to horizon-aware anytime-valid testing \citep{taga2026learning}, but differs in that we optimise against a prescribed alternative and allow general rewards over rejection times rather than only a fixed deadline. The same target-reaching structure also relates our work to a longer tradition of optimal gambling, including red-and-black and Dubins--Savage gambling, where stakes are chosen as a function of current fortune in order to reach a target wealth, with finite-time and discounted variants studied in later work \citep{dubins1965gamble,yao2007bold}. Our setting has a similar target-reaching flavour, yet also fundamental differences, as  both the admissible bets and the utility are dictated by the statistical testing problem, with validity under the null constraining the betting rules and the prescribed alternative determining the reward objective.

Our main contributions are threefold. First, we show that, among all predictable betting strategies that can depend on the full observation history, it is enough to consider policies depending only on the current wealth and round, which yields  a simpler  Bellman formulation. Second, in the simple-versus-simple setting, we show that restricting attention to testing-by-betting incurs no loss compared with optimising over more general anytime-valid procedures (testing via e-processes). Third, we analyse hard-deadline and exponentially decaying rewards. The hard-deadline reward recovers a finite-horizon Neyman--Pearson structure in the simple-versus-simple setting. For exponentially decaying rewards, we derive the exponential-decay optimal (EDO) criterion, which selects a stationary constant bet linked to power utilities and Rényi divergences, and recovers the GRO log-utility criterion in the large-time-scale limit. We illustrate our framework with Gaussian and Bernoulli examples.

\paragraph{Paper structure.}
The rest of the paper is organised as follows. \Cref{sec:control} introduces the sequential-control formulation of time-sensitive anytime-valid testing. We then prove the Markovian reduction and derive the associated Bellman representation in \Cref{sec:markov}. \Cref{sec:simple} specialises to the simple-vs-simple setting, where testing-by-betting with a canonical action space is shown to be equivalent to optimisation over general e-processes. \Cref{sec:hard} analyses hard-deadline rewards and relates the optimal procedure to a finite-horizon Neyman--Pearson problem, while \Cref{sec:exponential-reward} studies exponentially decaying rewards and introduces the EDO criterion. \Cref{sec:examples} works out Gaussian and Bernoulli examples. A brief collection of notation and conventions used throughout the paper is given in \Cref{app:notation}. All proofs are detailed in \Cref{app:proofs}. Additional calculations for the examples are provided in \Cref{sec:app-examples}, while \Cref{sec:appendix-experiments} describes the numerical experiments used to generate the figures. Finally, \Cref{sec:appendix-hard} gives further discussion of the hard-deadline reward and compares different horizon-aware methods.

\section{Testing-by-betting and time-sensitive formulation}
We let $\X$ be a standard Borel space and write $\PR_\X$ for the set of Borel probability measures on $\X$. We fix a null hypothesis $\PR_0\subseteq\PR_\X$ and a \emph{simple} alternative $\pi_1\in\PR_\X\setminus\PR_0$. We consider the problem of testing $\PR_0$ against $\pi_1$ in the i.i.d.~setting. More precisely, we sequentially observe a stream of observations $x_1, x_2,\dots$ and wish to test whether these are i.i.d.~draws from some unknown $\pi\in\PR_0$, against the alternative that they are i.i.d.~draws from $\pi_1$. It is convenient to formulate the problem directly on the path space $\X^\infty$ (endowed with the usual product $\sigma$-algebra). For $\pi\in\PR_\X$ we write $\pi^\otimes$ for the corresponding infinite product measures, and set $\PR_0^\otimes = \{\pi^\otimes\,:\,\pi\in\PR_0\}$. Our goal is therefore to test the sequential null  $\PR_0^\otimes$ against the simple alternative $\pi_1^\otimes$. Throughout the paper, we fix a significance level $\alpha\in(0,1)$: we are  interested in procedures whose type-I error  is at most $\alpha$.

We call an \emph{e-variable} for $\PR_0$ any Borel function $E:\X\to\R_+$ such that $\E_\pi[E]\leq 1$ for every $\pi\in\PR_0$. E-variables serve as building blocks of modern anytime-valid testing \citep{ramdas2023game, ramdas2025hypothesis} and as the core ingredient of the following \emph{testing-by-betting} framework, where they represent bets against the null. Fix a collection $\Ee$ of e-variables. A player starts with wealth $W_0=1$ and, at each round $t\geq1$, chooses predictably (based on past observations) a bet $E_t\in\Ee$. Then $x_t$ is revealed and the wealth  updated as $W_t=W_{t-1}E_t(x_t)$. Under the null, $(W_t)_{t\geq0}$ is a non-negative supermartingale. So, regardless of the betting strategy, the player is unlikely to accumulate substantial wealth, and rejecting the null once $W_t\geq\alpha^{-1}$ yields a valid level-$\alpha$ test. This is formalised in the next proposition (Proposition 1 in \citealp{clerico2025optimality}),  a direct consequence of Ville's inequality.
\begin{proposition}\label{prop:testbybet}
    For any predictable betting strategy in the testing-by-betting game, the procedure that rejects when the wealth first reaches $\alpha^{-1}$ is a valid sequential test at level $\alpha$. That is, when i.i.d.~draws from some $\pi\in\PR_0$ are observed,  the procedure rejects with probability at most $\alpha$.
\end{proposition}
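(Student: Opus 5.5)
The plan is to show that, under any fixed null distribution, the wealth process is a nonnegative supermartingale with initial value one, and then to apply Ville's inequality. Fix $\pi\in\PR_0$ and work on the path space $\X^\infty$ under $\pi^\otimes$, with the natural filtration $\mathcal F_t=\sigma(x_1,\dots,x_t)$. Predictability of the strategy means that $E_t$ is selected by an $\mathcal F_{t-1}$-measurable rule. To keep the measurability clean, I would formalise this by requiring that the map $(x_1,\dots,x_{t-1},x)\mapsto E_t(x)$ be jointly Borel. Under this convention, $W_t=W_{t-1}E_t(x_t)$ is $\mathcal F_t$-measurable and nonnegative.

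\textbf{Supermartingale step.} Under $\pi^\otimes$, the observation $x_t$ is independent of $\mathcal F_{t-1}$ and has law $\pi$. By the freezing (substitution) lemma for conditional expectations of functions of an $\mathcal F_{t-1}$-measurable quantity and an independent variable, we get
\[
\E_{\pi^\otimes}[W_t\mid\mathcal F_{t-1}] = W_{t-1}\int_\X E_t(x)\,\dd\pi(x)\le W_{t-1}.
\]
The inequality holds because each realised bet lies in $\Ee$ and is therefore an e-variable for $\PR_0$, which gives $\int E_t\,\dd\pi\le1$. Nonnegativity makes every conditional expectation well defined, with no integrability issues. Hence $(W_t)_{t\ge0}$ is a nonnegative supermartingale with $W_0=1$.

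\textbf{Ville's inequality.} For a nonnegative supermartingale starting at one, Ville's inequality gives $\pi^\otimes(\exists t:\,W_t\ge\alpha^{-1})\le\alpha$. The rejection event, namely that the wealth ever reaches $\alpha^{-1}$, is exactly this event. So the test rejects with probability at most $\alpha$, and since $\pi\in\PR_0$ was arbitrary this proves the claim.

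If a self-contained argument is preferred, Ville's inequality can be derived directly. Let $\tau=\inf\{t:W_t\ge\alpha^{-1}\}$. Optional stopping for nonnegative supermartingales at the bounded time $\tau\wedge n$ gives $1\ge\E[W_{\tau\wedge n}]\ge\alpha^{-1}\pi^\otimes(\tau\le n)$. Letting $n\to\infty$ then yields the bound by monotone convergence.

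The only genuinely delicate point is the measurability and formalisation of a "predictable choice of e-variable". One needs the conditional-expectation identity above to hold, which requires joint measurability of the bet in the history and the new observation. Everything else is standard.
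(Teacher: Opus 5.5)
Your proof is correct and follows the route the paper indicates. The paper gives no standalone proof, only the remark that under any $\pi\in\PR_0$ the wealth is a non-negative supermartingale started at one, after which it invokes Ville's inequality; your joint-measurability convention (which the paper's control formalisation ensures via the Borel maps $A_t$ and $\phi$) and your optional-stopping derivation of Ville's inequality simply make that argument self-contained.
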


A central question is how to choose the bets $E_t$. When an alternative $\pi_1$ is specified (i.e., under $\pi_1$  one wants the test to reject efficiently) the standard recommendation is the growth-rate-optimal (GRO) criterion \citep{grunwald2024safe}: choose an e-variable maximising the \emph{e-power} $\E_{\pi_1}[\log E]$.  This objective is the testing-by-betting analogue of the Kelly criterion \citep{kelly1956new}, and provides a standard notion of asymptotic power under optional continuation \citep{ramdas2025hypothesis}. For a \emph{simple} null (i.e., $\PR_0=\{\pi_0\}$ is a single measure), the GRO bet coincides with the likelihood ratio $\dd\pi_1/\dd\pi_0$,  recovering Wald's sequential probability ratio test (SPRT) \citep{wald1945sequential}. Its statistical justification is fundamentally asymptotic: under a constant strategy $E_t=E$, the law of large numbers gives $\log W_t\approx t\,\E_{\pi_1}[\log E]$ for large $t$ under $\pi_1^\otimes$. Hence, if $\E_{\pi_1}[\log E]>0$, wealth grows exponentially and the null is rejected in finite time almost surely. Moreover, \cite{agrawal2025stopping} showed that, as $\alpha\to0$, the rejection time has first-order asymptotic scale $\log(1/\alpha)/\E_{\pi_1}[\log E]$. Thus, maximising e-power minimises the first-order asymptotic scale of the rejection time.

Since the GRO criterion controls the rejection time only through its first-order asymptotic scale, it need not reflect preferences over when rejection occurs. In many applications, a test that rejects eventually may still be unsatisfactory if it does so too late. This suggests formulating the problem directly in terms of the \emph{rejection time} $\tau=\inf\{t\geq0:W_t\geq\alpha^{-1}\}$, with $\tau=\infty$ if the threshold is never reached. To express preferences over rejection times, we fix a non-increasing \emph{reward function} $R:\mathbb{N}\cup\{\infty\}\to\R_+$, with $R(\infty)=0$. We evaluate the performance of a betting strategy in the testing-by-betting game by $\E_{\pi_1^\otimes}[R(\tau)]$: we aim at a betting strategy that maximises this expected reward, thus replacing the asymptotic GRO criterion with an objective directly accounting for rejection time. A few natural examples of reward functions are the following. $R(t)=\one_{\{t\leq T\}}$ corresponds to maximising the probability of rejection within the first $T$ rounds, as in the hard-deadline setting of \cite{taga2026learning}. A softer version is the logistic $R(t)=1/(1+e^{(t-T)/\beta})$, with $\beta>0$, which smooths the transition around  $T$. Another  option is $R(t)=e^{-t/T}$,  assigning an exponentially decaying value to rejection, with time scale $T>0$. These examples are illustrated in \cref{fig:heatmaps-Gaussian}.

\section{Sequential control formalisation}\label{sec:control}

We now formalise the testing-by-betting procedure as a sequential control problem.  We consider an \emph{action space} $\A$ standard Borel and a Borel map $\phi:\A\times\X\to\R_+$ satisfying the validity condition
$$\int_\X\phi(a,x)\,\dd\pi(x)\leq1\,,$$
for every $a\in\A$ and $\pi\in\PR_0$. Each  $a\in\A$ indexes an e-variable $E_{[a]}=\phi(a,\,\cdot\,)$: in a testing-by-betting game with $\Ee = \{E_{[a]}\,:\,a\in\A\}$, ``\emph{choosing  action $a$}'' means ``\emph{playing  bet $E_{[a]}$}''.

In simple-null examples $\PR_0=\{\pi_0\}$, a natural construction is to let $a$ index a measure $\nu_a\ll\pi_0$ and set $\phi(a,x)=\dd\nu_a/\dd\pi_0(x)$ (cf.~\Cref{sec:simple}). For Bernoulli testing with $\pi_0=\Bernoulli(p_0)$, for $\A=[0,1]$ and $\nu_a=\Bernoulli(a)$, $\phi(a,1)=a/p_0$ and $\phi(a,0)=(1-a)/(1-p_0)$. For Gaussian testing with $\pi_0=\N(0,\sigma^2)$, taking $\A=\R$ and $\nu_a=\N(a,\sigma^2)$ gives $\phi(a,x)=\exp(ax/\sigma^2-a^2/(2\sigma^2))$. Beyond simple nulls, natural parametrisations are often available. For instance, for $\PR_0$ the class of measures supported on $[-1,1]$ with mean $m$, one obtains the coin-betting class \citep{orabona2024tight, clerico2025optimality} by taking $\A=[(m-1)^{-1},m^{-1}]$ and $\phi(a,x)=1+a(x-m)$. More generally, when the null is specified through constraints, the corresponding dual representation of e-variables yields a natural parametrised action space \citep{clerico2024properly,larsson2025constraints}.

We call a \emph{policy} any sequence $A^\infty=(A_t)_{t\geq1}$, where each $A_t:\X^{t-1}\to\A$ is a Borel map. At round $t$, after observing $x^{t-1}$, the policy plays the e-variable $E_t=E_{[A_t(x^{t-1})]}=\phi(A_t(x^{t-1}),\,\cdot\,)$. Define recursively
$$\widehat W_t^{A^\infty}(x^t)=\widehat W_{t-1}^{A^\infty}(x^{t-1})\,\phi\big(A_t(x^{t-1}),x_t\big)=\prod_{k=1}^t\phi\big(A_k(x^{k-1}),x_k\big) = \prod_{k=1}^t E_k(x_k)\,,$$ with $\widehat W_0^{A^\infty}(x^0)=1$. This is the \emph{wealth} at time $t$ under the policy $A^\infty$. We define $\tau^{A^\infty}:\X^\infty\to\Nn\cup\{\infty\}$ as the first time the wealth under $A^\infty$ reaches the  threshold $\alpha^{-1}$, namely
$$\tau^{A^\infty}(x^\infty) = \inf\{t\geq 1\,:\,\widehat W_t^{A^\infty}(x^t)\geq\alpha^{-1}\}\,,$$
where $\inf\emptyset=\infty$. $\tau^{A^\infty}$ is  a stopping time with respect to the natural filtration induced by the sequence of observations. With these definitions, we have reformulated the testing-by-betting game as a sequential control problem with action space $\A$, where the available bets are precisely $\{E_{[a]}:a\in\A\}$. Hence, by \Cref{prop:testbybet}, the procedure that rejects at time $\tau^{A^\infty}$ is valid at level $\alpha$.

Recall a \emph{reward function} is any non-increasing map $R:\Nn\cup\{\infty\}\to\R_+$ with $R(\infty)=0$. Evaluating a policy through the expected reward of its rejection time under $\pi_1^\otimes$ leads to the objective
\begin{equation}\label{eq:objective}\sup_{A^\infty}\E_{\pi_1^\otimes}[R(\tau^{A^\infty})]\,.\end{equation}

\section{Markovian reduction and Bellman formulation}\label{sec:markov}
The control formulation that we introduced allows very general policies: the action chosen at time $t+1$ may depend on the entire past $x^t$. This raises a natural question: is such full history dependence  needed? Our first main structural result shows that it is not. In fact, for the reward objective that we consider, there is no loss in restricting  to policies whose decision at time $t+1$ depends on the past only through the wealth. We call $a^\infty=(a_t)_{t\geq0}$ a \emph{Markov policy} if each $a_t:[0,\alpha^{-1})\to\A$ is Borel. We identify such a sequence with any ordinary policy that chooses the action $a_t(w)$ at round $t$ if the current wealth is $w<\alpha^{-1}$. Notably, once the threshold is reached the choice of action is irrelevant. With slight abuse of notations, we write $\tau^{a^\infty}$ for the rejection time induced by the Markov policy.

\begin{theorem}
\label{thm:markovian_reduction}
For any non-increasing reward function $R:\Nn\cup\{\infty\}\to\R_+$ we have  $$\sup_{A^\infty}\E_{\pi_1^\otimes}[R(\tau^{A^\infty})]=\sup_{a^\infty\text{ Markov}}\E_{\pi_1^\otimes}[R(\tau^{a^\infty})]\,.$$\end{theorem}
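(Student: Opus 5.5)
The inequality ``$\geq$'' is trivial, since every Markov policy is an ordinary policy. The work is in ``$\leq$''. I plan to prove it by dynamic programming, reducing to finite horizons first. Under $\pi_1^\otimes$ the observations are i.i.d. Hence, conditionally on $x^t$ with $\tau>t$, the future is again an i.i.d.\ stream. The only information relevant to the continuation is the current time $t$ (because $R$ is time-dependent) and the current wealth $w=\widehat W_t(x^t)$ (because the rejection time of the continuation depends on the past only through the threshold $\alpha^{-1}/w$ still to be reached).

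I first treat the truncated reward $R_N(t)=R(t)\one_{\{t\leq N\}}$. Define the value functions backwards: $V_N^{(N)}(t,w)=R(t)$ if $w\geq\alpha^{-1}$ and $t\le N$, and $0$ for $w<\alpha^{-1}$ at $t=N$. For $t<N$ and $w<\alpha^{-1}$,
$$V^{(N)}(t,w)=\sup_{a\in\A}\int_\X V^{(N)}\big(t+1,w\phi(a,x)\big)\,\dd\pi_1(x)\,,$$
with $V^{(N)}(t,w)=R(t)$ for $w\ge\alpha^{-1}$. Write $\E_{x^{t}}$ for conditional expectation given $x^t$ on the event $\{\tau>t\}$. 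By backward induction over $t$, using the tower property and the independence of $x_{t+1}$ from $x^t$, I will show two things. First, for any policy $A^\infty$, $\E_{x^t}[R_N(\tau^{A^\infty})]\leq V^{(N)}(t,\widehat W_t(x^t))$. Second, if measurable $\varepsilon$-maximisers $a_t(w)$ exist, the Markov policy they define attains $V^{(N)}(1\text{-step start})-N\varepsilon$. This gives the finite-horizon statement up to $\varepsilon$.

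\textbf{Main obstacle: measurability.} The supremum over $a$ of a Borel function need not be Borel, and exact maximisers need not exist. My first attempt would avoid Bellman measurability altogether. For each fixed $t$ and $w$, I note that the conditional continuation value under a general policy depends on $x^t$ only through $w$ in the sense of the supremum: any history-dependent continuation from $(t,w)$ can be re-run from any other history with the same wealth, by shifting indices. Should this fail to give a clean measurable selection, I would instead treat $V^{(N)}$ as universally measurable, appealing to standard Borel dynamic programming (Bertsekas--Shreve). That machinery yields universally measurable $\varepsilon$-optimal selectors. I would then modify these on a $\pi_1$-law-null set of wealth values to obtain Borel maps, justified because only the laws of the wealth process matter.

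Finally, I pass from $R_N$ to $R$. Since $R$ is non-negative and non-increasing, it is bounded by $R(1)$, and $R_N\uparrow R$ pointwise on $\Nn\cup\{\infty\}$ (with $R(\infty)=0$). For any policy, $\E[R(\tau)]-\E[R_N(\tau)]\le R(N+1)\,\Pr(N<\tau<\infty)\leq R(N+1)$. If $\lim R(N)=c>0$, the argument must be adjusted. In that case the tail contributes $c\Pr(\tau<\infty)$ plus a vanishing part, so I would add the terminal value at horizon $N$ to the recursion: at time $N$, replace $0$ by $\sup$ of $c\Pr(\text{eventual hit from }w)$. This term is itself a monotone Markov quantity, computable as the limit of its own finite-horizon recursions.

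Given $\varepsilon$, I choose $N$ with $|R(N)-c|<\varepsilon$. I then construct a Markov policy that is $\varepsilon$-optimal up to $N$ and afterwards $\varepsilon$-optimal for eventual hitting. The latter is again obtained by the same truncation argument applied to the reward $\one_{\{t<\infty\}}$ via horizons $M\to\infty$. Letting $\varepsilon\to0$ gives ``$\leq$''.
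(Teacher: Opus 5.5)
Your argument is correct in outline, but it follows a different route from the paper.

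\textbf{The paper's route.} The paper does not truncate. It embeds the problem in an infinite-horizon positive Borel model on $\Sc=(\Nn\times[0,\alpha^{-1}))\cup\{\circledast\}$, with reward $R(t+1)$ paid on entering the absorbing state $\circledast$. Using regular conditional distributions, it shows that every history-dependent policy is matched by a randomised state-history policy with the same state law. It then invokes Sch\"al's result that Borel stationary policies are $\varepsilon$-optimal in positive Borel models; a stationary policy on $\Sc$ is exactly a Markov policy in $(t,w)$. This gives Borel policies directly and handles the infinite horizon in one step, at the cost of relying on a black-box theorem.

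\textbf{Your route.} You use only finite-horizon Bertsekas--Shreve machinery together with a verification inequality. That inequality dominates history-dependent policies directly, so you never need randomised reduced policies. The price is measurability bookkeeping, and two points need tightening.

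First, the passage from universally measurable to Borel selectors must be done forward in time. The law of $W_t$ on $\{\tau>t\}$ depends on the earlier actions, so there is no single null set fixed in advance. Fix Borel versions of $a_0,\dots,a_{t-1}$, then replace $a_t$ by a Borel map agreeing with it almost surely under the resulting law of $W_t$, and iterate.

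Second, your case split on $c=\lim_N R(N)$ is unnecessary, and as written it is mildly circular. The reward $\one_{\{t<\infty\}}$ you fall back on has $c=1$, so the uniform tail bound $R(N+1)\to0$ does not apply to it either. What actually works there is the monotone-limit observation you only hint at, and it settles every $R$ at once. Since $0\leq R_N\uparrow R$, monotone convergence and exchanging suprema give $\sup_{A^\infty}\E_{\pi_1^\otimes}[R(\tau^{A^\infty})]=\sup_N\sup_{A^\infty}\E_{\pi_1^\otimes}[R_N(\tau^{A^\infty})]$. Any $\varepsilon$-optimal Markov policy for $R_N$ does at least as well for $R$, because $R\geq R_N$. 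No terminal-value term and no uniformity over policies are needed.
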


Although intuitive, the Markovian reduction is not straightforward  at this level of generality. Yet, this generality matters:  the infinite-dimensional class of e-variables  might not admit a natural finite-dimensional parametrisation without loss of power (e.g., \Cref{sec:simple}  shows that for simple-vs-simple tests the canonical action space is the space of all probability measures dominated by the null). \Cref{thm:markovian_reduction}'s proof embeds the problem in a positive Borel dynamic programme whose state records time, wealth, and rejection,  then concludes via a stationary-policy result  \citep{schal1987stationary}.

Since the restriction of a Markov policy to future time steps remains Markov, it is natural to consider continuation problems starting from an arbitrary wealth $w\in[0,\alpha^{-1})$. For a Markov policy $a^\infty$, let
$$\widetilde W_t^{w,a^\infty}(x^t)=\widetilde W_{t-1}^{w,a^\infty}(x^{t-1})\,\phi\bigl(a_{t-1}(\widetilde W_{t-1}^{w,a^\infty}(x^{t-1})),x_t\bigr)\,,$$
where formally $\widetilde W_0^{w,a^\infty}(x^0)=w$. We also define $\tau_w^{a^\infty}(x^\infty)=\inf\{t\geq 1\,:\,\widetilde W_t^{w,a^\infty}(x^t)\geq \alpha^{-1}\}$, the number of additional rounds needed for rejection when the current wealth is $w\in[0,\alpha^{-1})$ and future actions follow $a^\infty$. This continuation viewpoint naturally leads to the value function
$$V_t(w)=\sup_{a^\infty\text{ Markov}}\E_{\pi_1^\otimes}\bigl[R(t+\tau_w^{a^\infty})\bigr]\,.$$
In words, $V_t(w)$ is the maximal expected reward that can still be achieved when $t$ rounds have already been played, the current wealth is $w$, and the threshold has not yet been reached. Now \eqref{eq:objective} reads
$$\sup_{A^\infty}\E_{\pi_1^\otimes}[R(\tau^{A^\infty})]= \sup_{a^\infty\text{ Markov}}\E_{\pi_1^\otimes}[R(\tau^{a^\infty})] = V_0(1)\,.$$

For $t\geq0$ and $w\in[0,\alpha^{-1})$, we obtain formally  the following Bellman recursion:
\begin{equation}
\label{eq:bellman}
V_t(w)=\sup_{a\in\A}\int_\X\Bigl(R(t+1)\mathbf{1}_{\{w\phi(a,x)\geq \alpha^{-1}\}}+V_{t+1}(w\phi(a,x))\mathbf{1}_{\{w\phi(a,x)<\alpha^{-1}\}}\Bigr)\dd\pi_1(x)\,.
\end{equation}
We remark that, at this level of generality, the Bellman recursion should be interpreted with some care. Since $V_t$ is defined as a supremum, there is no reason a priori for $V_t$ itself to be Borel. Nevertheless, $V_t$ is at least universally measurable, which is enough to ensure that the integral in \eqref{eq:bellman} is well defined. However, in general, there is no guarantee of either a closed-form expression for $V_t$ or an optimal Borel policy attaining the supremum in \eqref{eq:bellman}. We therefore view the Bellman formulation primarily as a guiding principle. On the one hand, it provides a natural starting point for numerical approximation schemes, such as value-iteration type methods. On the other hand, it helps identify simpler but still informative approximations of the control problem itself, as we will see in \Cref{sec:exponential-reward}.

\section{Simple-vs-simple case: equivalence with e-processes}\label{sec:simple}
Before turning to concrete reward functions, it is worth relating  testing-by-betting  to the broader class of sequential tests based on e-processes of which testing-by-betting is a special case. Given a null hypothesis on $\X^\infty$, an \emph{e-process} is a sequence $U^\infty=(U_t)_{t\geq0}$ of non-negative Borel maps $U_t:\X^t\to\R_+$ such that $\E_{\Pi_0}[U_\tau]\leq1$ for every stopping time $\tau$ and every $\Pi_0$ in the null. Such a process induces a valid sequential test by rejecting when $U_t$ first reaches $\alpha^{-1}$ \citep{ramdas2025hypothesis}. Testing by betting constructs evidence processes by multiplying one-round bets, whereas general e-processes need not admit such a product representation \citep{ramdas2022testing}. It is  natural to ask whether, for our reward-based objective, restricting  to testing-by-betting can lose power.

We now specialise to the case of a simple null $\pi_0^\otimes$ and simple alternative $\pi_1^\otimes$, where $\pi_0,\pi_1\in\PR_\X$ and $\pi_1\ll\pi_0$. In this setting, there is a canonical choice of  action space $\A$ that entails no loss of power. First, note that every  e-variable $E$ for $\pi_0$ satisfies $\E_{\pi_0}[E]\leq 1$, and since the reward is non-increasing, any non-zero such $E$ may be normalised to satisfy $\E_{\pi_0}[E]=1$, which only increases the resulting wealth pathwise. Hence, without loss of generality, one may restrict attention to those e-variables satisfying $\E_{\pi_0}[E]=1$. These are precisely the Radon--Nikodym densities of probability measures dominated by $\pi_0$. Accordingly, in the simple-null case we may take as action space $\A$ the set
$$\PR^{\pi_0}=\{\nu\in\PR_\X\,:\,\nu\ll\pi_0\}\,.$$
\Cref{lemma:canonical_action_space} in \Cref{app:proofs} shows that $\PR^{\pi_0}$ is a standard Borel space and that versions of Radon--Nikodym densities can be chosen so that $(\nu,x)\mapsto \phi^{\pi_0}(\nu,x)=\dd\nu/\dd\pi_0(x)$ is Borel on $\PR^{\pi_0}\times\X$.\footnote{Since $\pi_1\ll\pi_0$, the particular choice of version does not affect either the null constraint or the objective under $\pi_1^\otimes$.} Thus, for simple nulls we may take $(\A,\phi)=(\PR^{\pi_0},\phi^{\pi_0})$. This is the full canonical version of the likelihood-ratio parametrisation used in the examples at the beginning of \Cref{sec:control}. In the Bernoulli case with $p_0\in(0,1)$, $\A=[0,1]$ coincides with $\PR^{\pi_0}$. By contrast, when $\pi_0=\N(0,\sigma^2)$, setting $\A=\R$ as in \Cref{sec:control} only parametrises the Gaussian-shift subfamily $\{\N(a,\sigma^2):a\in\R\}\subsetneq\PR^{\pi_0}$.

Let $\tau^{U^\infty}=\inf\{t\geq 0\,:\,U_t\geq \alpha^{-1}\}$ be the rejection time of $U^\infty$. The following equivalence holds.
\begin{theorem}
\label{thm:eprocess_equivalence}
Fix $\pi_0,\pi_1\in\PR_\X$, with $\pi_1\ll\pi_0$, and let $(\A,\phi) = (\PR^{\pi_0},\phi^{\pi_0})$. Then, we have
$$\sup_{U^\infty\text{ e-process}}\E_{\pi_1^\otimes}[R(\tau^{U^\infty})]=\sup_{A^\infty}\E_{\pi_1^\otimes}[R(\tau^{A^\infty})] = \sup_{a^\infty\text{ Markov}}\E_{\pi_1^\otimes}[R(\tau^{a^\infty})]\,.$$
\end{theorem}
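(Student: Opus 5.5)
The plan is to prove the chain of inequalities
$$\sup_{U^\infty}\E_{\pi_1^\otimes}[R(\tau^{U^\infty})]\;\geq\;\sup_{A^\infty}\E_{\pi_1^\otimes}[R(\tau^{A^\infty})]\;\geq\;\sup_{U^\infty}\E_{\pi_1^\otimes}[R(\tau^{U^\infty})]\,,$$
where the suprema range over e-processes and policies respectively. The remaining equality, between general policies and Markov policies, is exactly \Cref{thm:markovian_reduction}, so nothing further is needed there.

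\textbf{First inequality.} Every wealth process is an e-process. For any policy $A^\infty$, the wealth $\widehat W_t^{A^\infty}$ is a non-negative $\pi_0^\otimes$-supermartingale with $\widehat W_0=1$, because each factor $\phi^{\pi_0}(\nu,\cdot)$ has $\pi_0$-mean one. By optional stopping and Fatou, $\E_{\pi_0^\otimes}[\widehat W_\tau]\leq1$ for every stopping time $\tau$. Moreover, the rejection times coincide: $\tau^{A^\infty}$ and $\tau^{U^\infty}$ differ only at $t=0$, and $U_0=1<\alpha^{-1}$.

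\textbf{Second inequality: construction.} Fix an e-process $U^\infty$ and let $\sigma=\tau^{U^\infty}$. The goal is a policy whose wealth crosses $\alpha^{-1}$ no later than $\sigma$, $\pi_1^\otimes$-almost surely.
\begin{itemize}
\item Since $\E_{\pi_0^\otimes}[U_{\sigma\wedge n}]\leq1$ and $U_\sigma\geq\alpha^{-1}$ on $\{\sigma\leq n\}$, we get $\pi_0^\otimes(\sigma\leq n)\leq\alpha$ for all $n$. Letting $n\to\infty$ gives $\alpha':=\pi_0^\otimes(B)\leq\alpha$, where $B=\{\sigma<\infty\}$.
\item If $\alpha'=0$, then each finite-dimensional event $\{\sigma=n\}$ is $\pi_0^{\otimes n}$-null. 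Since $\pi_1^{\otimes n}\ll\pi_0^{\otimes n}$, it is also $\pi_1^\otimes$-null. Hence $R(\sigma)=0$ almost surely and any policy does at least as well.
\item Otherwise, define the Doob martingale $M_t=\pi_0^\otimes(B\mid\mathcal F_t)/\alpha'$. It satisfies $M_0=1$ and $M_t\geq0$. On $\{\sigma\leq t\}$ we have $M_t=1/\alpha'\geq\alpha^{-1}$, since $\{\sigma\leq t\}\in\mathcal F_t$ is contained in $B$.
\item The martingale property $\E_{\pi_0}[M_t\mid x^{t-1}]=M_{t-1}$ says that, on $\{M_{t-1}>0\}$, the ratio $x_t\mapsto M_t(x^{t-1},x_t)/M_{t-1}(x^{t-1})$ is a $\pi_0$-probability density. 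Call the corresponding measure $\nu_t(x^{t-1})\in\PR^{\pi_0}$. On $\{M_{t-1}=0\}$, set $\nu_t=\pi_0$.
\item Set $A_t(x^{t-1})=\nu_t(x^{t-1})$. Then $\widehat W_t^{A^\infty}=M_t$ $\pi_0^{\otimes t}$-a.s. on the event where $M$ has not yet hit zero.
\end{itemize}

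\textbf{Second inequality: transfer to $\pi_1^\otimes$.} Every identity above holds $\pi_0^{\otimes t}$-a.s. for each fixed $t$, and $\pi_1^{\otimes t}\ll\pi_0^{\otimes t}$, so each identity also holds $\pi_1^{\otimes t}$-a.s. (Absolute continuity may fail on the whole path space, so only finite horizons may be used.) Taking a countable union over $t$, $\pi_1^\otimes$-a.s. on $\{\sigma=n\}$ we have $\widehat W_n^{A^\infty}=M_n\geq\alpha^{-1}$. Here one must check that $M$ cannot vanish before $\sigma$. If $M_{s}=0$ for some $s<n$, then $\pi_0^\otimes(B\mid\mathcal F_s)=0$, while $\{\sigma=n\}\subseteq B$ has positive conditional mass on a set of positive measure, a contradiction up to null sets. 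Hence $\tau^{A^\infty}\leq\sigma$ $\pi_1^\otimes$-a.s. Since $R$ is non-increasing, $\E_{\pi_1^\otimes}[R(\tau^{A^\infty})]\geq\E_{\pi_1^\otimes}[R(\sigma)]$, which gives the second inequality.

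\textbf{Main obstacle.} The delicate step is measurability: showing that $x^{t-1}\mapsto\nu_t(x^{t-1})$ is a Borel map into the standard Borel space $\PR^{\pi_0}$, so that $A^\infty$ is an admissible policy. The first approach to try is the following.
\begin{itemize}
\item Choose a jointly Borel version of $M_t$ on $\X^t$, which exists because conditional expectations given $\mathcal F_t$ are Borel functions of $x^t$.
\item Modify this version on a $\pi_0^{\otimes t}$-null set so that the one-step ratio integrates exactly to one for every $x^{t-1}$, rather than only almost every.
\item Then $x^{t-1}\mapsto\nu_t(x^{t-1})(C)=\int_C M_t/M_{t-1}\,\dd\pi_0$ is Borel for each Borel $C$, hence the map is Borel into $\PR_\X$.
\item Finally, use \Cref{lemma:canonical_action_space} to obtain a Borel map into $\PR^{\pi_0}$, and to ensure that $\phi^{\pi_0}(\nu_t,x_t)$ agrees with $M_t/M_{t-1}$ almost surely.
\end{itemize}
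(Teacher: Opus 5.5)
Your argument is correct, but it takes a different route from the paper in the key step of the reverse inequality.

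\textbf{The paper's route.} It takes an arbitrary e-process and invokes \citet[Lemma~6]{ramdas2020admissible} to get a test martingale with $U_t\le M_t$ for every $t$, $\pi_0^\otimes$-almost surely. It then realises that martingale as a canonical wealth process, using the same Borel-version, null-set-modification and ratio construction you outline.

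\textbf{Your route.} You bypass the domination lemma with a construction tailored to the hitting-time objective. The Doob martingale $M_t=\pi_0^\otimes(\sigma<\infty\mid\mathcal F_t)/\pi_0^\otimes(\sigma<\infty)$ of the rejection event does not dominate $U$. However, it equals $1/\alpha'\geq\alpha^{-1}$ from time $\sigma$ on, and that is all the reward sees. This is the same device the paper uses in \Cref{prop:hard-deadline-np}, with $\Gamma^\star$ in place of $\{\sigma<\infty\}$.

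\textbf{What each buys.} Your route is more elementary and self-contained: it needs only Markov's inequality and a Doob martingale, with no Snell-envelope-type structure theorem for e-processes. It also unifies the proof with the hard-deadline argument. It further shows that the supremum over e-processes may be restricted to Doob martingales of events with null probability at most $\alpha$. The paper's route yields a stronger, threshold-independent conclusion, namely pathwise domination of $U$ at every time, at the price of the external citation.

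\textbf{Your bookkeeping.} The case $\alpha'=0$ is handled correctly. The check that $M$ does not vanish before $\sigma$ is right but phrased loosely. It can be written cleanly as $\pi_0^\otimes(\{\sigma=n\}\cap\{M_s=0\})\le\E_{\pi_0^\otimes}[\one_{\{M_s=0\}}\pi_0^\otimes(\sigma<\infty\mid\mathcal F_s)]=\alpha'\,\E_{\pi_0^\otimes}[\one_{\{M_s=0\}}M_s]=0$, followed by transfer to $\pi_1^\otimes$, since the event lies in $\mathcal F_n$. You could also avoid the check entirely. A non-negative martingale that hits zero stays there, so with your convention $\nu_t=\pi_0$ on $\{M_{t-1}=0\}$, the identity $\widehat W_t^{A^\infty}=M_t$ already holds almost surely without restriction.
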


The proof leverages that every e-process for a simple null is dominated by a  martingale \citep{ramdas2020admissible}. Thus, every e-process procedure can be realised as a testing-by-betting strategy with history-dependent one-round bets, and the Markovian reduction of \Cref{thm:markovian_reduction} then yields the claim.
\Cref{thm:eprocess_equivalence} shows that, in the simple-vs-simple i.i.d.\ setting, the control formulation loses no power from using one-round bets, provided $(\A,\phi)$ is chosen canonically. It also recovers Wald's classical SPRT under GRO (whose bet is $d\pi_1/d\pi_0$ each round), while our reward-based approach can yield more general policies that may depend on time and wealth.

\section{Hard-deadline reward}\label{sec:hard}

A natural choice for the reward is the hard-deadline $R(t)=\one_{\crl{t\leq T}}$. This yields the objective \eqref{eq:objective} $$\sup_{A^\infty}\E_{\pi_1^\otimes}[R(\tau^{A^\infty})] = \sup_{A^\infty}\pi_1^\otimes(\tau^{A^\infty}\leq T) =  \sup_{a^\infty\text{ Markov}} \pi_1^\otimes(\tau^{a^\infty}\leq T)\,,$$ namely maximising the probability of rejecting the null under the alternative before a fixed horizon. Related finite-horizon ideas appear in \cite{voravcek2025star} in the context of coin-betting for mean estimation (see, e.g., \citealp{orabona2024tight, waudbysmith2024estimating}). There, the deadline affects the betting strategy by inducing more aggressive bets as the horizon approaches, a behaviour one also expects in our formulation. The key difference is that their approach lacks a prescribed alternative, but implicitly generates  adaptive surrogate alternatives from observed deviations from the null. The horizon-aware formulation of \cite{taga2026learning} is closer to ours in its control objective: bounded mean testing under a fixed deadline is formulated as a finite-horizon control problem over time and wealth, aiming to maximise rejection probability before the deadline while preserving anytime-validity. However, unlike our formulation, they do not fix a single simple alternative. Instead, their reinforcement learning implementation uses several synthetic alternatives  as oracle training instances. As solving the dynamic programme exactly is generally hard, they mainly use its structure to describe regimes of betting aggressiveness and  motivate heuristic betting policies.

Even in our framework, with a fixed simple alternative $\pi_1$, the hard-deadline control problem remains intractable in general. Yet, in the simple-vs-simple case it  admits a canonical optimal Doob e-process (\cref{fig:bernoulli_doob}), whose construction is closely related to the induction of \cite{koning2026anytime}.
\begin{proposition}\label{prop:hard-deadline-np}
Consider testing $\pi_0^\otimes$ against $\pi_1^\otimes$, with $\pi_1\ll\pi_0$. Then there exists a Borel set $\Gamma^\star\subseteq\X^T$, viewed as an event depending only on the first $T$ observations, such that $\pi_0^{\otimes}(\Gamma^\star)\leq\alpha$ and
$$\sup_{U^\infty\text{ e-process}}\pi_1^\otimes\bigl(\tau^{U^\infty}\leq T\bigr)=\pi_1^{\otimes}(\Gamma^\star)=\max\bigl\{\pi_1^{\otimes}(\Gamma)\,:\,\Gamma\subseteq\X^T\text{ Borel}\,,\,\pi_0^{\otimes}(\Gamma)\leq\alpha\bigr\}\,.$$
Fixing Borel versions of the conditional probabilities, the supremum  is achieved by the $\pi_0^\otimes$-martingale
$U_t^\star(x^t)=\pi_0^\otimes(\Gamma^\star\mid x^t)/\pi_0^\otimes(\Gamma^\star)$ if $\pi_0^\otimes(\Gamma^\star)>0$, and by the constant e-process $U_t\equiv 1$ otherwise.
\end{proposition}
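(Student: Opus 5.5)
The argument has two halves: an upper bound reducing any e-process to a level-$\alpha$ test on $\X^T$, and a matching lower bound from an explicit martingale built on a Neyman--Pearson optimal set. We first settle existence of $\Gamma^\star$. Since $\pi_1\ll\pi_0$, we have $\pi_1^{\otimes T}\ll\pi_0^{\otimes T}$ on $\X^T$. The Neyman--Pearson lemma then gives a randomised optimal test of the form $\one_{\{L>k\}}+\gamma\one_{\{L=k\}}$, where $L=\prod_{t\le T}\dd\pi_1/\dd\pi_0(x_t)$. A priori, optimality over sets might need randomisation. However, $\X^T$ is standard Borel, so we can split the atom $\{L=k\}$ by a Borel set of the right $\pi_0$-mass. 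If $\pi_0$ has atoms, this may be impossible, in which case we instead show directly that the maximum over Borel sets $\Gamma$ with $\pi_0^\otimes(\Gamma)\le\alpha$ is attained. Attainment follows by a compactness argument: take the weak-$*$ closure in $L^\infty(\pi_0^{\otimes T})$ of indicators of feasible sets, and use the extreme-point structure of the set of tests $\{f:0\le f\le1,\ \int f\,\dd\pi_0^{\otimes}\le\alpha\}$. A linear functional $f\mapsto\int fL\,\dd\pi_0$ attains its max at an extreme point, and extreme points of this set are indicators only when no atom obstructs. So the cleanest route is to define $\Gamma^\star$ as a maximiser over Borel sets directly. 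Because $\X^T$ is standard Borel, the measure algebra is separable, and we can use the greedy/Lyapunov argument on the non-atomic part together with a finite knapsack on the atoms. The exact form of $\Gamma^\star$ is immaterial for what follows; only its optimality and $\pi_0^\otimes(\Gamma^\star)\le\alpha$ are used.

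\emph{Upper bound.} Let $U^\infty$ be any e-process and set $\Gamma=\{\tau^{U^\infty}\le T\}$. This is a Borel subset of $\X^T$, since $\tau^{U^\infty}$ is a stopping time. Apply the e-process property at the bounded stopping time $\sigma=\tau^{U^\infty}\wedge T$. On $\Gamma$ we have $U_\sigma\ge\alpha^{-1}$, so Markov's inequality gives $\pi_0^\otimes(\Gamma)\le\alpha\,\E_{\pi_0^\otimes}[U_\sigma]\le\alpha$. Hence $\pi_1^\otimes(\tau^{U^\infty}\le T)=\pi_1^\otimes(\Gamma)\le\pi_1^\otimes(\Gamma^\star)$.

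\emph{Lower bound.} If $\pi_0^\otimes(\Gamma^\star)=0$, then $\pi_1^\otimes(\Gamma^\star)=0$ by absolute continuity, so the constant e-process $U_t\equiv1$ is trivially optimal. Otherwise let $p=\pi_0^\otimes(\Gamma^\star)\in(0,\alpha]$ and define $U_t^\star$ as in the statement. Fixing Borel versions of the conditional probabilities, $U_t^\star$ is a nonnegative Doob martingale under $\pi_0^\otimes$ with $U_0^\star=1$. By optional stopping for nonnegative martingales, $\E[U^\star_\tau]\le1$ for every stopping time $\tau$, so $U^\star$ is an e-process. For $t=T$ we have $\pi_0^\otimes(\Gamma^\star\mid x^T)=\one_{\Gamma^\star}(x^T)$ for $\pi_0^\otimes$-a.e. $x^T$. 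Thus on $\Gamma^\star$ we get $U^\star_T=1/p\ge\alpha^{-1}$, giving $\{\tau^{U^\star}\le T\}\supseteq\Gamma^\star$ up to a $\pi_0$-null set. This null set is also $\pi_1$-null because $\pi_1^{\otimes T}\ll\pi_0^{\otimes T}$. Therefore $\pi_1^\otimes(\tau^{U^\star}\le T)\ge\pi_1^\otimes(\Gamma^\star)$, and combined with the upper bound we get equality.

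The main obstacle is the existence of a non-randomised maximiser $\Gamma^\star$ when $\pi_0$ has atoms. There, Neyman--Pearson alone yields a randomised test, and one must argue that the maximum over sets is still attained. The set of achievable pairs $(\pi_0(\Gamma),\pi_1(\Gamma))$ need not be closed a priori. I would handle this by decomposing $\X^T$ into the atoms of $\pi_0^{\otimes T}$ (countably many) and a non-atomic part. On the non-atomic part, Lyapunov-type arguments make the range of $(\pi_0,\pi_1)$ convex and compact. On the atoms, the feasible subsets form a compact set in the product topology on $\{0,1\}^{\Nn}$, and $\Gamma\mapsto(\pi_0(\Gamma),\pi_1(\Gamma))$ is continuous there by dominated convergence. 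Combining the two parts, a maximiser exists.
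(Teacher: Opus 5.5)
Your proposal is correct and follows the paper's proof. The upper bound (Markov's inequality applied at the bounded stopping time $\tau^{U^\infty}\wedge T$) and the lower bound (the Doob martingale, where your treatment of the $\pi_0$-null set at $t=T$ is slightly more careful than the paper's choice of version) are the same. For the existence of $\Gamma^\star$, the paper simply cites Liapounoff's theorem that the range of the $\R^2$-valued measure $(\pi_0^{\otimes T},\pi_1^{\otimes T})$ is closed even when atoms are present; your final atomic/non-atomic decomposition reproves exactly this, so the earlier detours through randomised Neyman--Pearson tests, extreme points and a ``finite knapsack'' (there may be countably many atoms) can be dropped.
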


Here $\Gamma^\star$ is a Neyman--Pearson rejection region for the fixed-sample problem at horizon $T$ \citep{neyman1933problem,lehmann2005testing}. Notably, the hard-deadline reward does not favour rejection earlier than the deadline. This can make the induced Doob test effectively non-sequential, as in Gaussian mean testing, where it coincides with the fixed-horizon Neyman--Pearson test (cf.~\Cref{sec:examples}). In Bernoulli testing, earlier rejection is possible, but the procedure is still conservative until close to the deadline (see \Cref{sec:appendix-hard} for further discussion). This motivates considering other rewards, such as the exponential-decay reward of the next section.

\begin{figure}[!b]
    \centering
    \includegraphics[width=0.75\textwidth]{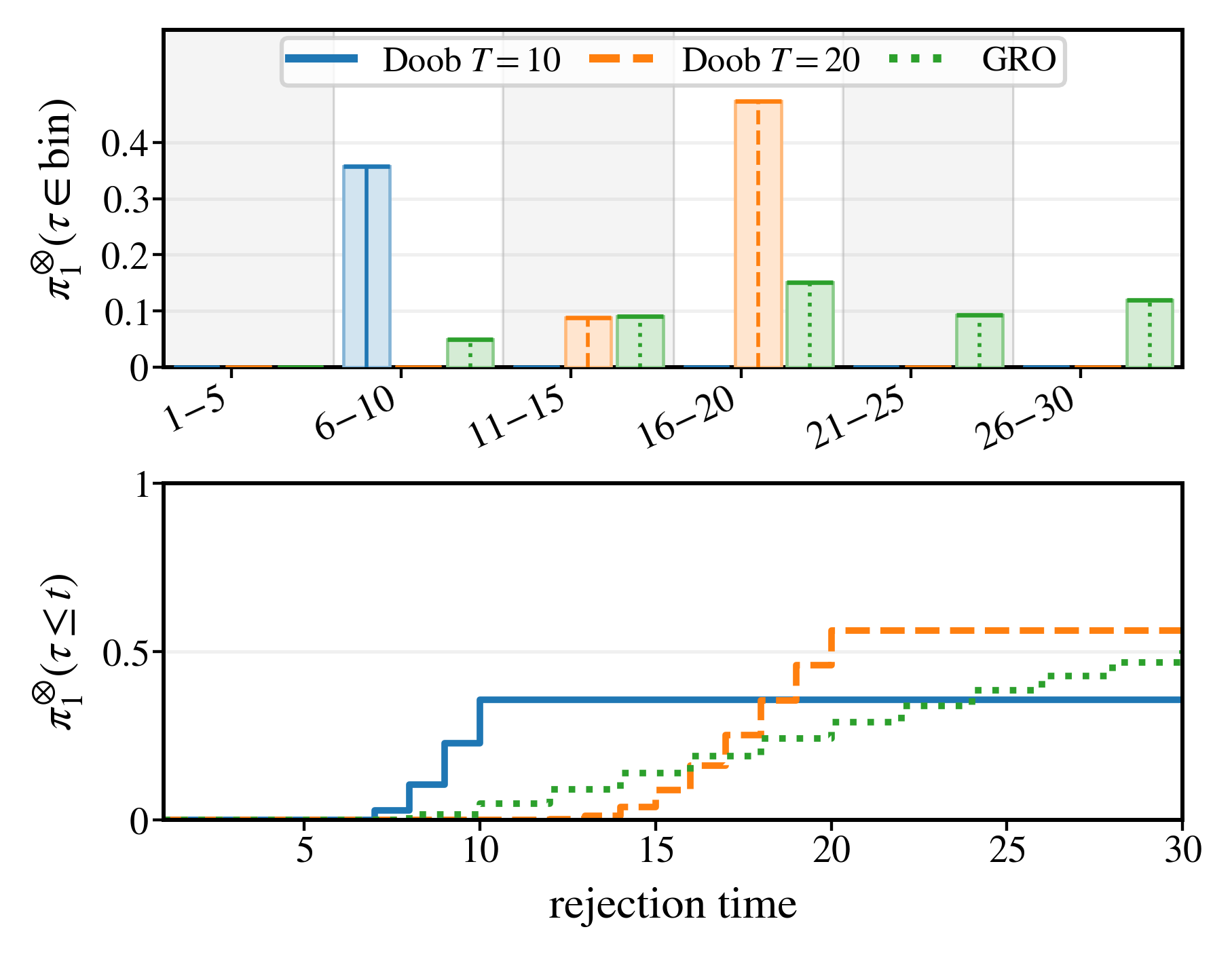}
    \caption{Rejection-time distributions for $\pi_0=\Bernoulli(0.4)$ vs $\pi_1=\Bernoulli(0.6)$ under the Doob e-process, compared with GRO. Details in \Cref{sec:appendix-fig-bernoulli_doob}.}
    \label{fig:bernoulli_doob}
\end{figure}
\Cref{prop:hard-deadline-np} shows that for the simple-vs-simple the hard-deadline problem can be tackled directly at the e-process level, without solving the Bellman recursion. The optimal level-$\alpha$ sequential test rejects the null as soon as $U^{\star,\infty}$ crosses  $\alpha^{-1}$.  For $(\A,\phi)=(\A^{\pi_0},\phi^{\pi_0})$,  $U^{\star,\infty}$ is the wealth of any predictable policy satisfying $\nu_t^\star(x^{t-1})=\pi_0^{\otimes}(x_t\in\cdot\mid x^{t-1},\Gamma^\star)$ for $t\leq T$. This canonical policy implementation need not be Markovian, as it may depend on the full history. This is consistent with \Cref{thm:markovian_reduction}, which does not require every optimiser to be Markovian. Concrete examples are discussed in \Cref{sec:examples} and \Cref{sec:app-examples}, where the canonical induced policy is Markovian for Gaussian mean testing, not necessarily so in the Bernoulli case.

\section{Exponentially decaying reward}
\label{sec:exponential-reward}
We now specialise to the exponentially decaying reward $R(t)=e^{-t/T}$, with $T>0$ the time scale. It is convenient to factor out the explicit time dependence from the value function and rescale the wealth. Thus, we let $z=\alpha w$ and $V_t(z/\alpha)=e^{-t/T}v(z)$. The Bellman recursion \eqref{eq:bellman} now reads
$$v(z)=e^{-1/T}\sup_{a\in\A}\int_\X\Bigl(\mathbf{1}_{\{z\phi(a,x)\geq 1\}}+v(z\phi(a,x))\mathbf{1}_{\{z\phi(a,x)<1\}}\Bigr)\dd\pi_1(x)\,.$$
The problem is therefore stationary ($t$ no longer appears) and the only remaining inhomogeneity is the rejection boundary at $z=1$. In general, this stationary Bellman equation does not admit a closed-form solution, but can be tackled numerically by standard dynamic-programming methods, after discretising wealth and actions, or by truncating the horizon and applying backward induction (as for \cref{fig:heatmaps-Gaussian}). However, our focus here is different. We derive an analytically tractable approximation which, like GRO, selects a constant betting rule and admits an explicit large-threshold interpretation.

Since the test starts from $z=\alpha$, typically much smaller than $1$, it is natural to start by studying the behaviour far from the threshold. In that regime, the truncation at $1$ should have only a limited effect, suggesting the approximate homogeneous equation
\begin{equation}\label{eq:hom}v(z)\approx e^{-1/T}\sup_{a\in\A}\int_\X v(z\phi(a,x))\dd\pi_1(x)\,.\end{equation}
Under this approximation, the homogeneous equation becomes scale invariant in the wealth, suggesting the multiplicative ansatz $v(z)=cz^\eta$, with $c>0$ and $\eta>0$. Substituting this into \eqref{eq:hom} factors out the dependence on $z$. For the ansatz to yield a solution, there must exist $a_\star\in\A$ and $\eta_\star>0$ such that
\begin{equation}\label{eq:cond}\sup_{a\in\A}\E_{\pi_1}\bigl[\phi(a,\,\cdot\,)^{\eta_\star}\bigr] = \E_{\pi_1}\bigl[\phi(a_\star,\,\cdot\,)^{\eta_\star}\bigr] = e^{1/T}\,.\end{equation}
When this holds, $z\mapsto cz^{\eta_\star}$ is a fixed point of the homogeneous Bellman equation \eqref{eq:hom} and we define the exponential-decay-optimal (EDO) criterion: play the action $a_\star$ from \Cref{eq:cond}, independently of wealth. For $T$ large enough, the existence of a pair satisfying \Cref{eq:cond} is natural when the action space is sufficiently rich: if $e^{1/T}$ is close to $1$, it is enough to have actions with a small amount of moment growth under $\pi_1$. This becomes explicit in \Cref{prop:exp-reward-summary} for the simple-vs-simple case.

The threshold boundary effect has deliberately been ignored. Near $z=1$, the stationary action should only be viewed as a baseline: once some outcomes already trigger rejection, extra payoff beyond the threshold has no value, and a wealth-dependent action may improve performance by reallocating betting mass to non-rejection outcomes. For instance, in Bernoulli testing, if observing $1$ already rejects, one can cap the payoff assigned to $1$ at the level needed to cross the threshold and shift the remaining mass to increase the payoff assigned to $0$. Such boundary-aware corrections can be applied to baselines such as EDO or GRO, and are related in spirit to \cite{fischer2026improving}. Numerically, in \cref{fig:edo-approx}, EDO already provides a close approximation to the Bellman optimum, and the boundary-aware capped variant nearly matches it.

\begin{figure}[!t]
    \centering
    \includegraphics[width=.75\linewidth]{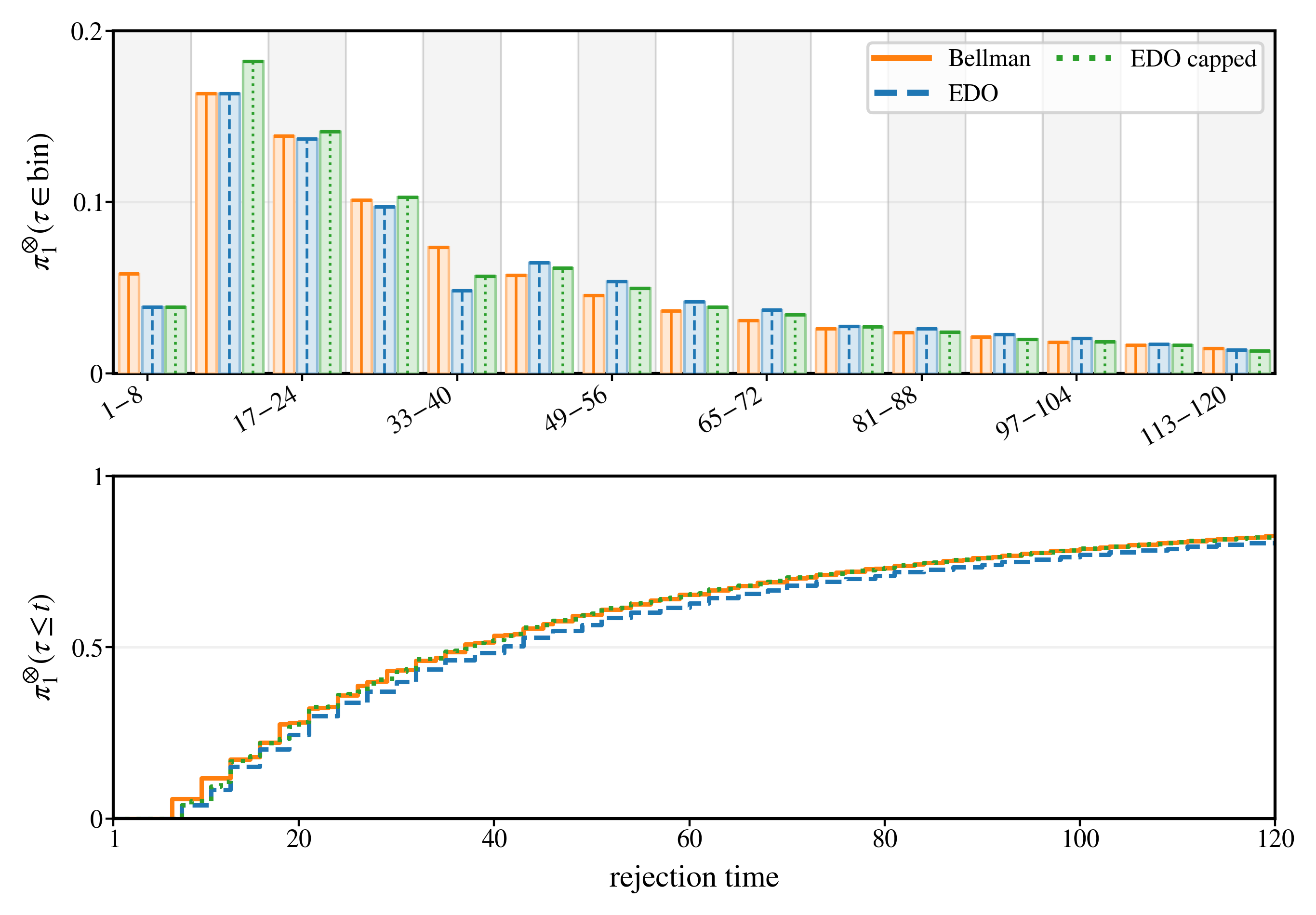}
        \caption{Comparison of EDO with the numerical Bellman optimum for $\pi_0=\Bernoulli(1/2)$ vs $\pi_1=\Bernoulli(2/3)$ in terms of rejection times. With boundary-aware capping, the approximation is nearly indistinguishable. Details in \Cref{sec:appendix-fig-edo-approx}.}
        \label{fig:edo-approx}
\end{figure}

The following  bound shows that the policy suggested by the homogeneous equation has the right large-threshold order. We write $\tau^a$ for the rejection time under the policy that always plays $a\in\A$.
\begin{proposition}
\label{prop:approximation-constant-strategy}
Let $(a_\star,\eta_\star)$ satisfy \eqref{eq:cond}, with $\eta_\star\in(0,1)$ and $\phi(a_\star,\,\cdot\,)\leq \Phi$, $\pi_1$-almost surely. Then
$$(\alpha/\Phi)^{\eta_\star}\leq \E_{\pi_1^\otimes}[e^{-\tau^{a_\star}/T}] \leq \sup_{a^\infty\text{ Markov}}\E_{\pi_1^\otimes}[e^{-\tau^{a^\infty}/T}]\leq \alpha^{\eta_\star}\,.$$
\end{proposition}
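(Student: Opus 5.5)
The plan is a supermartingale argument for the process
$$M_t=e^{-t/T}\bigl(\alpha W_t\bigr)^{\eta_\star}\,,$$
where $W_t$ is the wealth of the policy under consideration. This is exactly the ansatz $e^{-t/T}c z^{\eta_\star}$ from the homogeneous equation \eqref{eq:hom}, evaluated along the wealth path. The middle inequality is trivial, because the constant policy that always plays $a_\star$ is a Markov policy. So only the two outer bounds need proof.

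For the upper bound, I fix an arbitrary Markov policy $a^\infty$, and in fact any predictable policy works. Conditionally on $x^{t}$, the next action $a=a_t(W_t)$ is fixed, so
$$\E_{\pi_1^\otimes}[M_{t+1}\mid x^t]=M_t\,e^{-1/T}\,\E_{\pi_1}\bigl[\phi(a,\cdot)^{\eta_\star}\bigr]\leq M_t\,.$$
The inequality holds because \eqref{eq:cond} states that the supremum over $a$ of $\E_{\pi_1}[\phi(a,\cdot)^{\eta_\star}]$ equals $e^{1/T}$. Hence $(M_t)$ is a nonnegative $\pi_1^\otimes$-supermartingale with $M_0=\alpha^{\eta_\star}$. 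On the event $\{\tau<\infty\}$ we have $\alpha W_\tau\geq1$, so $e^{-\tau/T}\leq M_\tau$. Since $R(\infty)=0$, optional stopping for nonnegative supermartingales, applied via Fatou's lemma to $M_{t\wedge\tau}$, gives
$$\E\bigl[e^{-\tau/T}\one_{\{\tau<\infty\}}\bigr]\leq\E\bigl[M_\tau\one_{\{\tau<\infty\}}\bigr]\leq\liminf_{t\to\infty}\E[M_{t\wedge\tau}]\leq\alpha^{\eta_\star}\,.$$
Taking the supremum over policies yields the upper bound.

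For the lower bound, I take the constant policy $a_\star$. The equality in \eqref{eq:cond} now makes $(M_t)$ an exact martingale, so $\E[M_{t\wedge\tau}]=\alpha^{\eta_\star}$ for every $t$. The step needing care is passing to the limit, and I would handle it with a domination argument.
\begin{itemize}
\item Before rejection, $\alpha W_t<1$, so $M_t\leq e^{-t/T}\leq1$.
\item At rejection, $\alpha W_\tau=\alpha W_{\tau-1}\phi(a_\star,x_\tau)<\Phi$, $\pi_1$-a.s., so $M_\tau\leq\Phi^{\eta_\star}e^{-\tau/T}$.
\end{itemize}
Thus $M_{t\wedge\tau}\leq\max(1,\Phi^{\eta_\star})$ is bounded. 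On $\{\tau=\infty\}$, $M_t\leq e^{-t/T}\to0$. Dominated convergence therefore gives
$$\alpha^{\eta_\star}=\E\bigl[M_\tau\one_{\{\tau<\infty\}}\bigr]\leq\Phi^{\eta_\star}\,\E\bigl[e^{-\tau^{a_\star}/T}\bigr]\,,$$
which rearranges to the lower bound $(\alpha/\Phi)^{\eta_\star}$.

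The main obstacle is this limit exchange together with the overshoot control. The bound $\phi\leq\Phi$ is exactly what caps the wealth at the crossing time, and this cap is what makes the martingale identity transfer to the stopped value. The hypothesis $\eta_\star\in(0,1)$ does not seem essential to this argument; positivity of $\eta_\star$ suffices for the monotonicity $\alpha W_\tau\geq1\Rightarrow(\alpha W_\tau)^{\eta_\star}\geq1$. If it were needed anywhere, it would be only to guarantee $\Phi\geq1$-type comparisons or measurability conventions, and I would check this at the end.
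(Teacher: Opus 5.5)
Your proof is correct and follows essentially the same route as the paper: a stopped supermartingale $e^{-(t\wedge\tau)/T}(\alpha W_{t\wedge\tau})^{\eta_\star}$ for the upper bound (the paper uses $e^{-\tau/T}\one_{\{\tau\le n\}}\le M_n$ with monotone convergence where you use Fatou), and, for the lower bound, the exact martingale under $a_\star$ with the overshoot capped by $\Phi$ and dominated convergence. Your remark that only $\eta_\star>0$ is used is also accurate, since the paper's argument never uses $\eta_\star<1$ either.
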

Thus EDO provides a first-order proxy: it identifies a constant policy with optimal $\alpha$-exponent, in the sense that both the optimal Markov value and the value achieved by the constant action $a_\star$ have logarithm $\eta_\star\log(\alpha)+O(1)$ as $\alpha\to 0$. Hence, like GRO, EDO gives an $\alpha$-agnostic constant-bet criterion whose optimality is first-order in the rejection threshold \citep{agrawal2025stopping}.

To make things more concrete, we consider the simple-vs-simple case, with $\pi_1\ll\pi_0$ and $(\A, \phi)=(\PR^{\pi_0},\phi^{\pi_0})$. Then, for $T$ large enough a closed-solution of \eqref{eq:cond} typically exists: $a^\star$ is a likelihood-ratio tilt of $\pi_0$, and $\eta^\star$ is expressed through a Rényi divergence. For $\eta\in(0,1)$, let $L_\eta=(\dd\pi_1/\dd\pi_0)^{1/(1-\eta)}$ and $D_\xi(\pi\|\pi') = \frac{1}{\xi - 1} \log \E_{\pi'}[(\dd\pi/\dd\pi')^\xi]$ denote the Rényi divergence of order $\xi>1$.

\begin{proposition}
\label{prop:exp-reward-summary}
Let $G = \{(1-\eta)\log\E_{\pi_0}[L_\eta]\,:\,\eta\in[0,1)\,,\,\E_{\pi_0}[L_\eta]<\infty\}$. If $1/T\in(0,\sup G)$, then there exist $\eta_\star\in(0,1)$ and $\pi_\star\ll\pi_0$ satisfying \eqref{eq:cond}. Moreover,
$\dd\pi_\star/\dd\pi_0=L_{\eta_\star}/\E_{\pi_0}[L_{\eta_\star}]$
and $\log(\sup_{\pi\ll\pi_0}\E_{\pi_1}[(\dd\pi/\dd\pi_0)^{\eta_\star}]) = \eta^\star D_{1/(1-\eta_\star)}(\pi_1\|\pi_0) = 1/T$.
\end{proposition}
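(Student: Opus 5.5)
The plan is to first solve the inner maximisation in \eqref{eq:cond} in closed form for each fixed $\eta\in(0,1)$, and then choose $\eta$ by an intermediate-value argument. Write $\ell=\dd\pi_1/\dd\pi_0$, so that $\E_{\pi_0}[\ell]=1$. For any $\pi\ll\pi_0$ with density $f=\dd\pi/\dd\pi_0$ (so $\E_{\pi_0}[f]=1$), we have $\E_{\pi_1}[f^\eta]=\E_{\pi_0}[\ell f^\eta]$.

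\textbf{Step 1: the inner supremum.} Apply H\"older's inequality with exponents $1/(1-\eta)$ and $1/\eta$:
$$\E_{\pi_0}[\ell f^\eta]\leq \bigl(\E_{\pi_0}[\ell^{1/(1-\eta)}]\bigr)^{1-\eta}\bigl(\E_{\pi_0}[f]\bigr)^\eta=\bigl(\E_{\pi_0}[L_\eta]\bigr)^{1-\eta}.$$
Whenever $\E_{\pi_0}[L_\eta]<\infty$, equality holds for $f\propto \ell^{1/(1-\eta)}=L_\eta$, i.e.\ for $f=L_\eta/\E_{\pi_0}[L_\eta]$. Define $h(\eta)=(1-\eta)\log\E_{\pi_0}[L_\eta]$. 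Then $h(\eta)$ is the log of the supremum, and it is attained by the tilt in the statement. Setting $\xi=1/(1-\eta)$, so that $\xi-1=\eta/(1-\eta)$, gives the identity
$$h(\eta)=(1-\eta)\log\E_{\pi_0}[\ell^\xi]=\eta\,D_{\xi}(\pi_1\|\pi_0).$$
This yields the Rényi expression. It therefore remains to find $\eta_\star\in(0,1)$ with $h(\eta_\star)=1/T$.

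\textbf{Step 2: regularity of $h$.} Let $\eta\mapsto\xi=1/(1-\eta)$ be the increasing reparametrisation. By Lyapunov/Jensen, finiteness of $\E_{\pi_0}[\ell^{\xi_1}]$ implies finiteness of $\E_{\pi_0}[\ell^{\xi}]$ for all $\xi\in[1,\xi_1]$. Hence the finiteness domain is an interval containing $0$, with $h(0)=\log\E_{\pi_0}[\ell]=0$. Since $1/T<\sup G$, there is $\eta_1$ in the domain with $h(\eta_1)>1/T$.

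On $[0,\eta_1]$, the pointwise bound $\ell^\xi\leq \max(1,\ell^{\xi_1})$, with $\xi_1=1/(1-\eta_1)$, provides an integrable dominating function. Dominated convergence then shows that $\eta\mapsto\E_{\pi_0}[L_\eta]$ is continuous on the closed interval $[0,\eta_1]$, including both endpoints. Consequently $h$ is continuous on $[0,\eta_1]$. (Monotonicity of $h$, from $\eta$ increasing and $D_\xi$ nondecreasing in $\xi$, is not needed but shows the root is well behaved.)

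\textbf{Step 3: conclusion.} By the intermediate value theorem there is $\eta_\star\in(0,\eta_1]$ with $h(\eta_\star)=1/T$. Here $\eta_\star>0$ because $h(0)=0<1/T$, and $\eta_\star\leq\eta_1<1$. Set $\dd\pi_\star/\dd\pi_0=L_{\eta_\star}/\E_{\pi_0}[L_{\eta_\star}]$. By Step 1, $\pi_\star$ attains $\sup_{\pi\ll\pi_0}\E_{\pi_1}[(\dd\pi/\dd\pi_0)^{\eta_\star}]=e^{h(\eta_\star)}=e^{1/T}$. This is exactly \eqref{eq:cond} with $a_\star=\pi_\star$ in the canonical action space $(\PR^{\pi_0},\phi^{\pi_0})$. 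The Rényi identity follows from the computation in Step 1.

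\textbf{Main obstacle.} The only delicate point is Step 2: continuity of $h$ up to the edge of its finiteness domain. A jump there could make the intermediate value theorem fail. This is handled by working on a closed interval $[0,\eta_1]$ strictly inside the region where the moment is finite at $\eta_1$ itself, so that dominated convergence applies uniformly.
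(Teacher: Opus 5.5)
Your proposal is correct and follows essentially the same route as the paper: H\"older's inequality with exponents $1/(1-\eta)$ and $1/\eta$ identifies the inner supremum and its maximiser $L_\eta/\E_{\pi_0}[L_\eta]$, the identity $(1-\eta)\log\E_{\pi_0}[L_\eta]=\eta D_{1/(1-\eta)}(\pi_1\|\pi_0)$ gives the R\'enyi form, and the intermediate value theorem on $[0,\bar\eta]$, starting from $h(0)=0$, produces $\eta_\star$. The one difference is in how continuity is justified: your dominated-convergence argument with the bound $\ell^\xi\leq\max(1,\ell^{\xi_1})$ makes explicit what the paper attributes to the continuity of $q\mapsto\|L\|_q$.
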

Under the mild integrability condition $G\neq\{0\}$, \eqref{eq:cond} admits a solution for all sufficiently large $T$. EDO then selects the e-variable $\dd\pi_\star/\dd\pi_0=L_{\eta_\star}/\E_{\pi_0}[L_{\eta_\star}]$, a power tilt of $\pi_0$ by the likelihood ratio, with $\eta_\star$ chosen so that $\eta_\star D_{1/(1-\eta_\star)}(\pi_1\|\pi_0)=1/T$. We can compare EDO with the GRO criterion. In the simple-vs-simple case, GRO recovers the constant action $\pi_1$,  achieving $\sup_{\pi\ll\pi_0}\E_{\pi_1}\bigl[\log(\dd\pi/\dd\pi_0)\bigr]=\KL{\pi_1}{\pi_0}$. This KL divergence is also the quantity governing the first-order expected stopping-time scale under GRO: for the likelihood-ratio strategy, $\E_{\pi_1^\otimes}[\tau^{\pi_1}]\sim\log(1/\alpha)/\KL{\pi_1}{\pi_0}$ as $\alpha\to0$ \citep{agrawal2025stopping}. By contrast, EDO plays $\pi^\star$ every round. In this case, the  large-threshold relation reads $-T\log\E_{\pi_1^\otimes}[e^{-\tau^{\pi_\star}/T}]\sim\log(1/\alpha)/D_{1/(1-\eta_\star)}(\pi_1\|\pi_0)$. The two pictures are consistent, as  $T\to\infty$ the objective approaches the linear regime $e^{-\tau/T}\sim 1-\tau/T$, while $\eta_\star\to0$ and $D_{1/(1-\eta_\star)}(\pi_1\|\pi_0)\to\KL{\pi_1}{\pi_0}$ (cf.~\cref{fig:edo-gro}).

There is another comparison with GRO worth making. GRO favours actions with large expected log-growth under the alternative, and positive log-growth leads to almost sure eventual rejection. Conversely, the exponentially decaying reward values earlier rejection, and for small time scales may select an action with non-positive log-drift. Thus early rejection can come at the cost of power. The next result records this trade-off for constant policies in the general action-space  formulation.

\begin{figure}[!t]
    \centering
    \includegraphics[width=.75\linewidth]{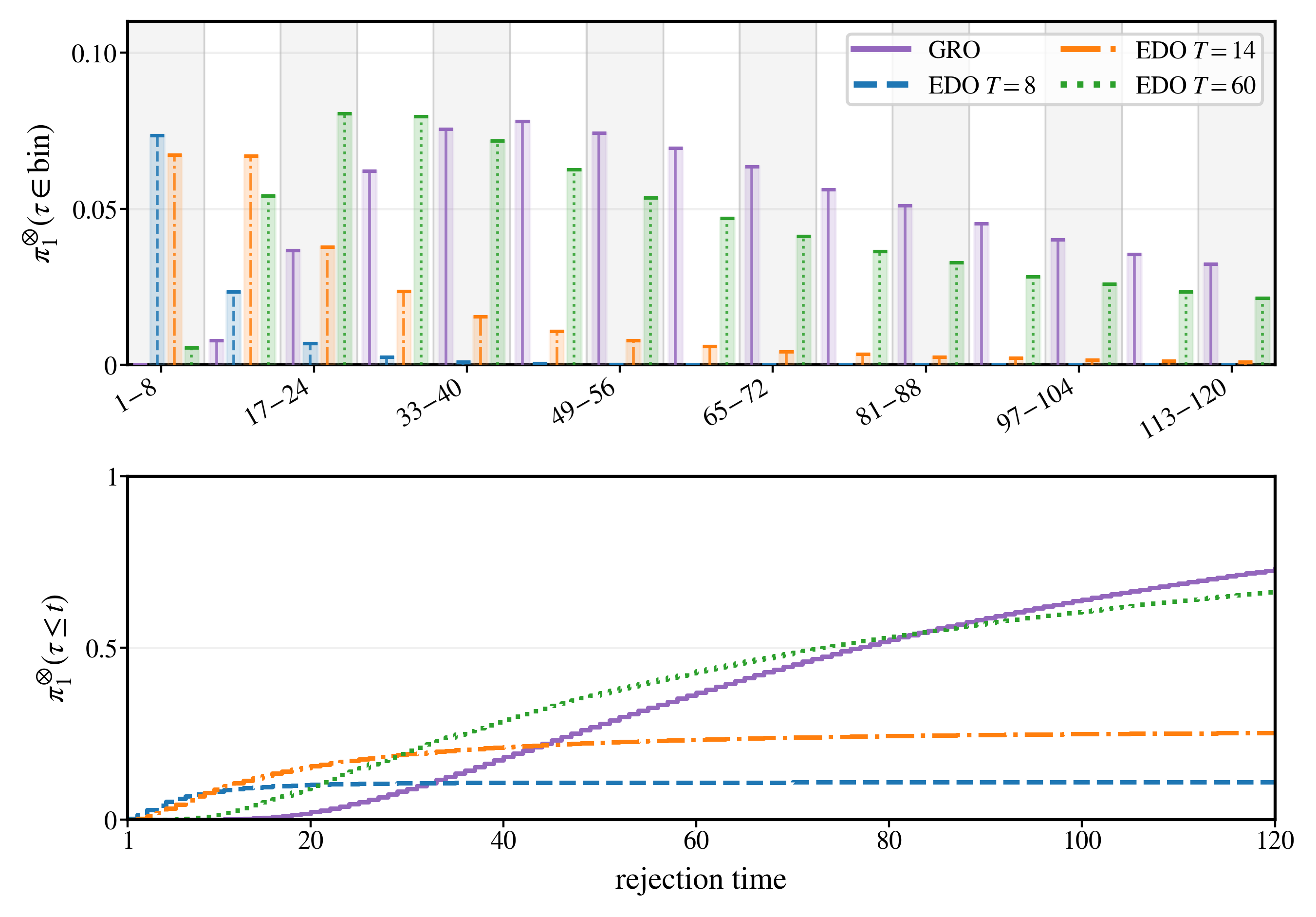}
        \caption{Effect of the reward time scale on EDO rejection-time distributions for $\pi_0=\N(0,1)$ vs $\pi_1=\N(.25,1)$: smaller time scales favour earlier rejection, with a visible power trade-off. GRO is shown for comparison. Details in \Cref{sec:appendix-fig-edo-gro}.}
        \label{fig:edo-gro}
\end{figure}

\begin{proposition}
\label{prop:power-constant-strategy}
Fix $a\in\A$, assume that $\phi(a,\,\cdot\,)>0$, $\pi_1$-almost surely, and that $\gamma(a)=\E_{\pi_1}[\log\phi(a,\,\cdot\,)]\in[-\infty,\infty]$ is well-defined. Then, $\pi_1^\otimes(\tau_w^a<\infty)=1$ for every $w\in(0,\alpha^{-1})$ if and only if either $\gamma(a)>0$, or $\gamma(a)=0$ and $\pi_1(\log\phi(a,\,\cdot\,)>0)>0$. Moreover, when $\gamma(a)<0$, if $\phi(a,\,\cdot\,)\leq\Phi$, $\pi_1$-almost surely, and there exists $\kappa>0$ such that $\E_{\pi_1}[\phi(a,\,\cdot\,)^\kappa]=1$, then $(\alpha w/\Phi)^\kappa\leq\pi_1^\otimes(\tau_w^a<\infty)\leq(\alpha w)^\kappa$ for every $w\in(0,\alpha^{-1})$.
\end{proposition}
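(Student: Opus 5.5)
Under the constant policy, the log-wealth $\log\widetilde W_t^{w,a} = \log w + S_t$ is a random walk, where $S_t=\sum_{k\le t}Y_k$ and $Y_k=\log\phi(a,x_k)$ are i.i.d.\ under $\pi_1^\otimes$ with mean $\gamma(a)$. The event $\{\tau_w^a<\infty\}$ is exactly $\{\sup_t S_t\geq \log(1/(\alpha w))\}$, where the level $b=\log(1/(\alpha w))$ is strictly positive. The first part then reduces to the classical trichotomy for one-dimensional random walks, and the second part to a Cramér--Lundberg type estimate via an exponential martingale.

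\textbf{First part.} We split according to the sign of $\gamma(a)$.

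If $\gamma(a)>0$, including $+\infty$, the strong law of large numbers gives $S_t\to\infty$ almost surely. For $\gamma=+\infty$ we use the version with infinite positive mean. Hence every level is crossed.

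If $\gamma(a)<0$, including $-\infty$, then $S_t\to-\infty$ almost surely, so $M=\sup_t S_t$ is almost surely finite. Since $\mathrm{law}(M)$ is a proper distribution on $[0,\infty)$, we can choose $b$ large with $\pi_1^\otimes(M\geq b)<1$. Such a $b$ corresponds to some $w\in(0,\alpha^{-1})$, so the property fails for that $w$.

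If $\gamma(a)=0$ and $Y$ is not almost surely $0$, the walk is recurrent-oscillating by the Chung--Fuchs theorem, or more elementarily by the classical fact that a mean-zero nondegenerate random walk has $\limsup S_t=+\infty$ almost surely. Every level is then reached. Note that $\gamma=0$ together with $\pi_1(Y>0)>0$ is equivalent to nondegeneracy. If instead $\pi_1(Y>0)=0$ with $\gamma=0$, then $Y=0$ almost surely. The wealth then stays at $w<\alpha^{-1}$ forever and $\tau_w^a=\infty$.

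This covers the "if and only if". The only delicate point is quoting the right random-walk results, including the infinite-mean cases; the rest is routine.

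\textbf{Second part.} Assume $\gamma<0$, $\phi(a,\cdot)\leq\Phi$, and $\E_{\pi_1}[\phi(a,\cdot)^\kappa]=1$. Then $M_t=(\widetilde W_t^{w,a})^\kappa/w^\kappa=e^{\kappa S_t}$ is a nonnegative mean-one $\pi_1^\otimes$-martingale.

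For the upper bound, apply Ville's inequality to $M_t$. We get $\pi_1^\otimes(\sup_t M_t\geq (\alpha w)^{-\kappa})\leq(\alpha w)^\kappa$, which is exactly the event $\tau_w^a<\infty$.

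For the lower bound, use optional stopping at $\tau\wedge n$. This gives
$$1=\E[M_{\tau\wedge n}]=\E[M_\tau\one_{\tau\le n}]+\E[M_n\one_{\tau>n}].$$
On $\{\tau>n\}$ we have $M_n<(\alpha w)^{-\kappa}$. Moreover $M_n\to0$ almost surely, because $S_n\to-\infty$. Dominated convergence therefore kills the second term, yielding $1=\E[M_\tau\one_{\tau<\infty}]$.

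At the stopping time the overshoot is controlled: $\widetilde W_\tau=\widetilde W_{\tau-1}\phi\leq\alpha^{-1}\Phi$. Hence $M_\tau\leq(\Phi/(\alpha w))^\kappa$, and so
$$1\leq (\Phi/(\alpha w))^\kappa\,\pi_1^\otimes(\tau<\infty).$$
Rearranging gives the lower bound $(\alpha w/\Phi)^\kappa\leq\pi_1^\otimes(\tau_w^a<\infty)$.

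The main step to verify carefully is that $M_n\one_{\tau>n}\to0$ in $L^1$. This follows from the uniform bound on $\{\tau>n\}$ combined with almost sure convergence, so it is not a real obstacle.
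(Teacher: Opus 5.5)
Your proof is correct and follows essentially the same route as the paper: you reduce to the random walk $S_t=\sum_{k\le t}\log\phi(a,x_k)$, settle the equivalence via the strong law and the oscillation of nondegenerate mean-zero walks, and obtain both bounds by optional stopping of the martingale $e^{\kappa S_t}$ together with the overshoot bound $\widetilde W_\tau\leq\Phi/\alpha$. The deviations are minor and harmless. For $\gamma(a)<0$ you exhibit a single failing $w$, which is all the stated equivalence requires, whereas the paper shows failure for every $w$. You also dispose of $\E[M_n\one_{\{\tau>n\}}]$ by dominated convergence instead of the paper's auxiliary lower barrier $\sigma_b$ with $b\downarrow0$.
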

\begin{figure}[!b]
    \centering
    \includegraphics[width=0.5\textwidth]{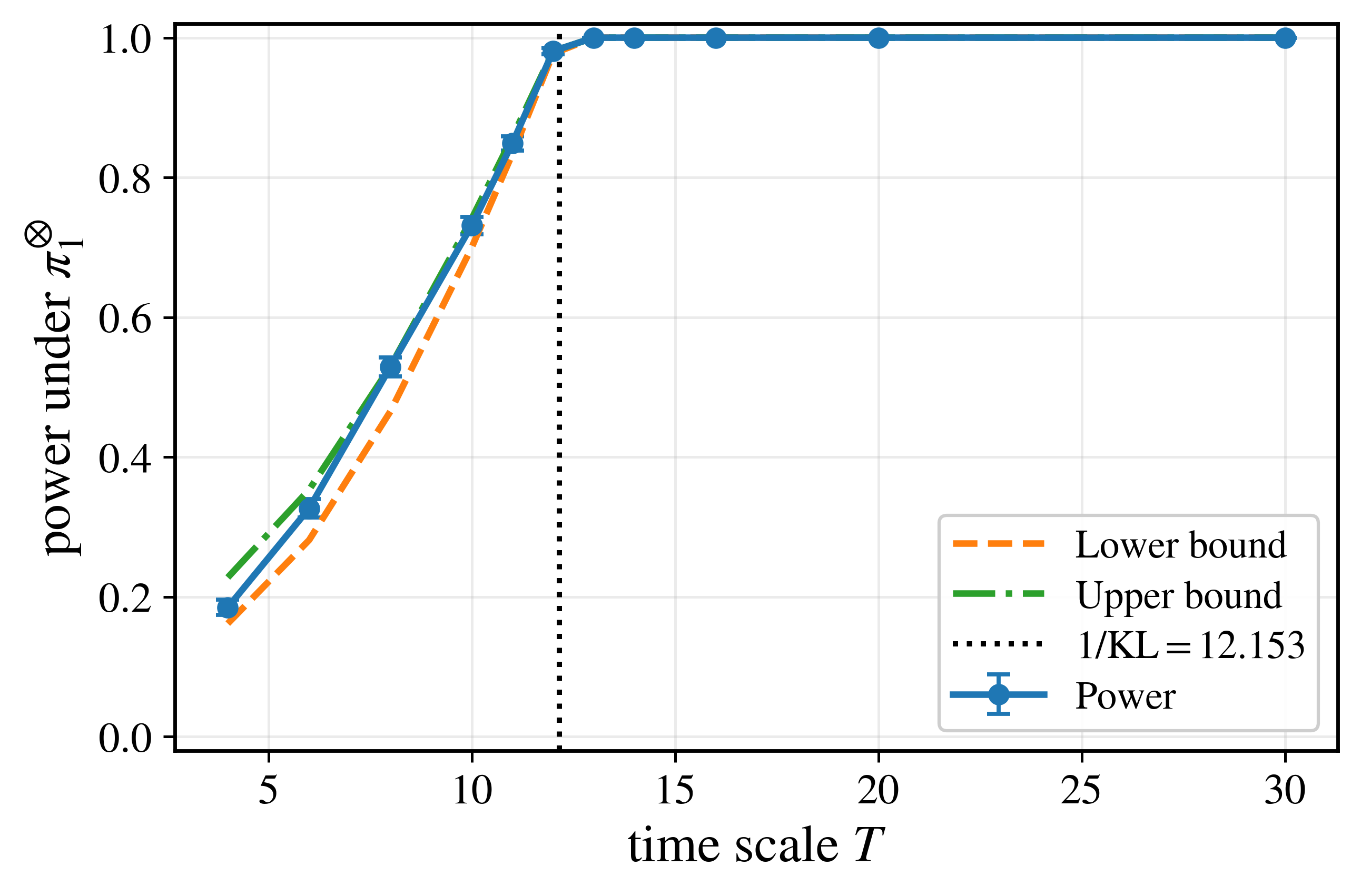}
    \caption{Power trade-off in $\pi_0=\Bernoulli(.5)$ vs $\pi_1=\Bernoulli(.7)$ for  EDO criterion. Details in \Cref{sec:appendix-fig-power-bound-check}.}
    \label{fig:power-bound-check}
\end{figure}

\Cref{prop:power-constant-strategy} essentially says that a strategy betting the same action each round has power $1$ against $\pi_1^\otimes$ if and only if its log-wealth has non-negative drift under $\pi_1$. In the simple-vs-simple setting, under the exponentially decaying reward this criterion becomes especially explicit for the constant action $\pi_\star$ selected by \Cref{prop:approximation-constant-strategy}, for which we get $\gamma(\pi_\star)=(\KL{\pi_1}{\pi_0}-1/T)/(1-\eta_\star)$.
In such case for a test to have power $1$ we need $\KL{\pi_1}{\pi_0}$ to be at least $1/T$. The trade-off between $T$ and the power is illustrated in \cref{fig:power-bound-check} for two Bernoulli distributions.

\section{Two illustrative examples}\label{sec:examples}
We discuss two simple-vs-simple examples. The full calculations are deferred to \Cref{sec:app-examples}.
\paragraph{Testing Gaussians.} We consider the Gaussian case, where $\pi_0=\N(0,\sigma^2)$ and $\pi_1=\N(\mu,\sigma^2)$, with $\mu>0$ and $\sigma>0$. We fix $(\A,\phi) = (\PR^{\pi_0},\phi^{\pi_0})$. For the hard-deadline reward $R(t)=\one_{t\leq T}$, \Cref{prop:hard-deadline-np} gives the event $\Gamma^\star=\{S_T\geq c\}$, where $c=\sigma\sqrt T F^{-1}(1-\alpha)$, $S_T=\sum_{t=1}^T x_t$ and $F$ is the cdf of a standard normal. The optimal e-process is the Doob martingale $$U_t^\star=\alpha^{-1}\pi_0^\otimes(S_T\geq c\mid S_t)=\frac{1}{\alpha}\left(1-F\left(\frac{c-S_t}{\sigma\sqrt{T-t}}\right)\right)\,,$$ for $t<T$, with terminal convention $U_T^\star=\alpha^{-1}\one_{[c,\infty)}(S_T)$. The induced strategy is Markovian, since the same conditional probability is strictly increasing in $S_t$, hence in the current wealth (see \Cref{sec:app-gauss}). Note that $S_T$ given $S_t$ is Gaussian with full support, so the conditional probability above is strictly below one for every $t<T$. Thus the exact Gaussian hard-deadline test never rejects before the deadline: it rejects at $T$ precisely on $\Gamma^\star$, and hence effectively coincides with the fixed-sample Neyman--Pearson test. This contrasts with \cref{fig:heatmaps-Gaussian}, where the action space is restricted to Gaussian-shift bets, so the displayed policy is the Bellman optimum within that smaller class rather than the unrestricted Doob strategy. Finally, the magnitude of $\mu$ affects the power but not the rejection region $\Gamma^\star$, only its sign does.

For the exponentially decaying reward $R(t)=e^{-t/T}$, the EDO action from \Cref{prop:exp-reward-summary} is obtained solving $(T\eta^\star)^{-1} = D_\xi(\pi_1\|\pi_0)=\xi\mu^2/(2\sigma^2)^{-1}$, which gives $\eta^\star = 2\sigma^2/(\mu^2T + 2\sigma^2)\in(0,1)$. The EDO action is $\pi_\star=\N\left(2\sigma^2/(\mu T)+\mu,\,\sigma^2\right)$, corresponding to the e-variable
$$E_\star(x) = \frac{\dd\pi_\star}{\dd\pi_0}(x)=\exp\left(\left(\frac{\mu}{\sigma^2}+\frac{2}{\mu T}\right)(x-\mu)-\frac{2\sigma^2}{\mu^2T^2}\right)$$
in the testing-by-betting game.
Compared with the GRO strategy, which plays $\pi_1=\N(\mu,\sigma^2)$, EDO is more aggressive, shifting further away from the null. As $T\to\infty$,  $\pi_\star$ recovers the GRO  $\pi_1$. By \Cref{prop:power-constant-strategy}, this  aggressiveness preserves power one if and only if $T\geq\KL{\pi_1}{\pi_0}^{-1}=2\sigma^2/\mu^2$.

\paragraph{Testing Bernoullis.} Consider the case $\pi_0=\Bernoulli(p_0)$ and $\pi_1=\Bernoulli(p_1)$, with $0<p_0<p_1<1$. Set $(\A,\phi)=(\A^{\pi_0},\phi^{\pi_0})$. For the hard-deadline  $R(t)=\one_{t\leq T}$, identifying  $\Gamma^\star$ is  more delicate than in the Gaussian case, since  Bernoulli  is atomic. This can still be done exactly through a finite integer programme (see \Cref{sec:app-bern}). However, the canonical Doob strategy may fail to be Markovian in the wealth (cf.~the discussion after \Cref{prop:hard-deadline-np}). For large $T$, a simpler and  explicit upper-tail approximation holds, only depending on $S_T = \sum_{t=1}^T x_t$ (see \Cref{sec:app-bern}).

For  $R(t)=e^{-t/T}$, to determine the EDO action, $\eta_\star$ does not admit a closed-form but can be easily computed numerically solving a one-dimensional equation. The resulting action $\pi_\star=\Bernoulli(p_\star)$ has a closed-form expression in terms of $\eta_\star$ (see \Cref{sec:app-bern}), from which it follows that EDO is more aggressive than GRO: $p_\star>p_1$. As $T\to\infty$, we have $\eta_\star\to0$, and $\pi_\star$ recovers the GRO action $\pi_1$. By \Cref{prop:power-constant-strategy}, EDO has power one if and only if $T\geq\KL{\Bernoulli(p_1)}{\Bernoulli(p_0)}^{-1}$ (cf.~\cref{fig:power-bound-check}).

\section{Discussion}

We close by discussing some limitations of the present formulation and possible open directions. First, the current work only focuses on the case of a simple alternative $\pi_1$. This choice makes the control problem transparent, but it is restrictive when the alternative is composite. A natural extension would be to replace the expectation under $\pi_1^\otimes$ by a Bayesian, worst-case, or relative worst-case criterion over a class of alternatives, in the spirit of the GROW and REGROW criteria from safe testing \citep{grunwald2024safe}. The resulting problem would be a robust version of our time-sensitive control formulation. 

A related open direction concerns composite nulls. Our explicit approximation results are derived in the simple-vs-simple setting, while under GRO the extension to a composite null often admits a least-favourable-null interpretation through a KL projection of the alternative onto the null model. It would be interesting to understand whether a similar projection principle exists for the time-sensitive objectives considered here. 

Another point concerns the role of the significance level. Throughout our analysis, we fix a  significance level $\alpha$, making the objective transparent: the reward is assigned to the hitting time of $\alpha^{-1}$. However, the same wealth process determines hitting times for all rejection boundaries, and one may want a strategy that performs well at several values of $\alpha$ simultaneously. A natural extension would be to replace the single-threshold objective by an aggregate, worst-case, or relative worst-case criterion over a family of significance levels, in analogy with the composite-alternative criteria discussed above. In any case, for the exponential decay reward, EDO is already $\alpha$-agnostic. 

Finally,  our analysis is restricted to non-negative rewards. This is natural when $R(t)$ represents the \emph{value} of a rejection at $t$, and it allows to invoke a result of \cite{schal1987stationary} to prove the Markovian reduction. However, it excludes unbounded loss-type objective, such as directly minimising $\E_{\pi_1^\otimes}[\tau]$ (which would correspond to $R(t) = -t$). Extending the theory to suitable unbounded rewards remains an interesting direction. However, the exponentially decaying reward already gives a \emph{proxy} for minimising the expected rejection time: as $T\to\infty$, we enter the linear regime $e^{-t/T}\approx 1-t/T$ and EDO recovers  GRO.

\paragraph{LLM usage.} The authors acknowledge the use of LLMs to improve exposition, explore proof ideas, and generate code. The authors take full responsibility for the content of the paper.

\newpage

\bibliographystyle{abbrvnat}
\bibliography{bibliography}

\newpage
\appendix

\crefalias{section}{appendix}
\crefalias{subsection}{appendix}
\crefalias{subsubsection}{appendix}
\crefname{appendix}{Appendix}{Appendices}
\Crefname{appendix}{Appendix}{Appendices}

\section{Notation and conventions}
\label{app:notation}

We use the following notation throughout. We write $\Nn=\{0,1,2,\dots\}$ and $\R_+=[0,\infty)$. For a statement $B$, $\one_B$ denotes its indicator. We use the convention that empty products are equal to one and that $\inf\emptyset=\infty$. All logarithms are natural logarithms.

For a sequence $x_1,x_2,\dots$, we write $x^t=(x_1,\dots,x_t)$ for the first $t$ coordinates, with $x^0$ denoting the empty history. We write $x^\infty=(x_1,x_2,\dots)$ for the full infinite sequence.

All measurable spaces are assumed to be standard Borel unless explicitly stated otherwise. A standard Borel space is a measurable space that is measurably isomorphic to a Borel subset of a Polish space. This convention is used to avoid repeated measurability qualifications and to ensure that regular conditional probabilities and measurable kernels are available.

If $\X$ is a measurable space, $\PR_\X$ denotes the set of Borel probability measures on $\X$. For $\pi\in\PR_\X$, $\E_\pi$ denotes expectation under $\pi$. If $\pi$ is a one-step law, then $\pi^\otimes$ denotes the infinite product measure on $\X^\infty$, corresponding to an i.i.d. sequence with common law $\pi$.

For probability measures $\nu$ and $\pi$ on the same measurable space, $\nu\ll\pi$ means that $\nu$ is absolutely continuous with respect to $\pi$, and $\dd\nu/\dd\pi$ denotes a Radon--Nikodym derivative. Statements involving such derivatives are understood up to almost-sure equivalence unless a particular version is fixed.

Unless otherwise specified, filtrations are the natural filtrations generated by the observed sequence. A random time is called a stopping time with respect to this filtration. All maps and kernels are taken to be Borel measurable whenever this is required by the context.

Measurability is always meant with respect to the Borel $\sigma$-fields determined by the spaces under consideration. Maps, policies, kernels, and selections are taken to be Borel measurable unless explicitly stated otherwise.

\newpage
\section{Omitted proofs}\label{app:proofs}
\subsection[Proof of Theorem 1]{Proof of \Cref{thm:markovian_reduction}}

We prove here the Markovian reduction stated in \Cref{thm:markovian_reduction}:
\begin{theorem*}[Markovian reduction]
For any reward function $R$ we have $$\sup_{A^\infty}\E_{\pi_1^\otimes}[R(\tau^{A^\infty})]=\sup_{a^\infty\text{ Markov}}\E_{\pi_1^\otimes}[R(\tau^{a^\infty})]\,.$$
\end{theorem*}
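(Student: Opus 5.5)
The inequality ``$\geq$'' is immediate: every Markov policy $a^\infty$ can be identified with an ordinary policy $A_t(x^{t-1})=a_{t-1}(\widehat W_{t-1}(x^{t-1}))$. This map is Borel because the wealth process is a Borel function of the history, obtained by composing Borel maps. The substance is therefore ``$\leq$''. I will follow the route indicated in the main text: embed the problem in a positive Borel dynamic programme whose state is (time, wealth, rejection flag), and then invoke the result of \citet{schal1987stationary}. That result says that in a positive (nonnegative-reward) Borel model the supremum over all history-dependent policies equals the supremum over Markov, or even stationary, deterministic policies.

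\textbf{Construction of the model.} The state space is $\Sc=(\Nn\times[0,\alpha^{-1}))\cup\{\dagger\}$, where $\dagger$ is an absorbing ``already rejected'' cemetery state. The action space is $\A$, and every action is admissible in every state, so the constraint set is trivially Borel. From state $(t,w)$ under action $a$, we draw $x\sim\pi_1$ and move to $(t+1,w\phi(a,x))$ if $w\phi(a,x)<\alpha^{-1}$, and to $\dagger$ otherwise. This transition kernel is Borel because $\phi$ is Borel. The one-step reward is $r((t,w),a)=R(t+1)\,\pi_1(\{x: w\phi(a,x)\geq\alpha^{-1}\})\geq0$, which is Borel, and the reward in $\dagger$ is $0$. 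With these choices the total expected reward from $(0,1)$ under any policy equals $\E_{\pi_1^\otimes}[R(\tau)]$, since exactly one reward $R(\tau)$ is collected, at the jump to $\dagger$, and nothing is collected if $\tau=\infty$, consistent with $R(\infty)=0$.

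\textbf{Comparing policy classes.} A general policy $A^\infty$ in the original problem depends on $x^{t-1}$, whereas a policy in the dynamic programme depends on the state--action history. I need to show that every $A^\infty$ is dominated, and in fact matched, by some (possibly randomized) history-dependent policy of the model. The difficulty is that $x^{t-1}$ carries more information than the state sequence. I will handle this by observing that the value in Sch\"al's framework is unchanged if the controller is given extra exogenous information that is independent of future transitions. Concretely, I will enlarge the state to include the history, or argue through the disintegration below; the key identity is that the objective depends only on the joint law of the state process.

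Take the joint law of $(S_t,A_t(x^{t-1}))$ under $\pi_1^\otimes$. Since $x_t$ is independent of $x^{t-1}$, the conditional law of $S_{t+1}$ given the past states and actions is the model kernel evaluated at $(S_t,A_t)$. Disintegrating gives a randomized policy $\mu_t(\mathrm{d}a\mid S_0,A_0,\dots,S_t)$ of the model, which exists because the spaces are standard Borel, and this policy induces the same law of the state--action process. Hence it yields the same expected reward. Sch\"al's theorem for positive Borel models then gives
\[
\sup_{\text{randomized history-dependent}}=\sup_{\text{deterministic Markov (stationary in }(t,w))}.
\]
A deterministic stationary policy $f(t,w)$ is exactly a Markov policy $a_t(w)=f(t,w)$.

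\textbf{Main obstacle.} The delicate point is verifying the hypotheses of \citet{schal1987stationary}: nonnegativity, Borel measurability of the kernel and the reward, and, if required, boundedness or finiteness of the optimal value. Here the value is bounded by $R(1)\leq R(0)<\infty$, so that holds. What needs care is making sure the theorem delivers Borel (not merely universally measurable) policies. Sch\"al's result provides stationary policies that are $\varepsilon$-optimal, or optimal in the positive case, and I expect some care is needed to obtain Borel selectors and to confirm that equality of suprema, rather than attainment, suffices. I will cite the version asserting that the value of the positive model is approached by deterministic stationary policies, which is exactly what ``$\leq$'' requires.
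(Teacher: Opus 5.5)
Your proposal is correct and follows essentially the same route as the paper. It uses the same reduced model on (time, wealth, absorbing rejection state), the same disintegration of $A_{t}(X^{t-1})$ into a randomised reduced policy that reproduces the law of the state process, and the same appeal to Sch\"al's Corollary~1.1 (persistently $\varepsilon$-optimal Borel stationary policies when the value is finite). The remaining differences are cosmetic: you use an expected one-step reward $r(s,a)$ instead of $r(s,a,s')$, and you condition on state--action histories rather than state histories. You should drop the alternative of \emph{enlarging the state to include the history}, since stationary policies on that enlarged state would not be Markov in $(t,w)$.
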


First, we define a \emph{reduced} control model. This is a positive Borel control model whose state records the current time and wealth, until the threshold is reached. We set
$$\Sc=\big(\Nn\times[0,\alpha^{-1})\big)\cup\{\circledast\}\,,$$
where $\circledast$ is a terminal state. The interpretation is that $(t,w)\in\Nn\times[0,\alpha^{-1})$ means that $t$ observations have been seen, the current wealth is $w$, and the threshold has not yet been reached. The state $\circledast$ is absorbing and represents the threshold having been reached.

Define $\Phi:\Sc\times\A\times\X\to\Sc$ by
$$\Phi((t,w),a,x)=\begin{cases}\circledast&\text{if }w\phi(a,x)\geq\alpha^{-1}\,,\\(t+1,w\phi(a,x))&\text{if }w\phi(a,x)<\alpha^{-1}\,,\end{cases}$$
for $(t,w)\in\Nn\times[0,\alpha^{-1})$, and $\Phi(\circledast,a,x)=\circledast$. Since $\phi$ is jointly Borel, $\Phi$ is Borel. The alternative $\pi_1$ induces a transition kernel $q$, defined as
$$q(B\mid s,a)=\pi_1(\{x\in\X:\Phi(s,a,x)\in B\})\,,$$
for any Borel $B\subseteq\Sc$. This is a Borel measurable stochastic kernel on $\Sc$, by the Borelness of $\Phi$. We now define the one-step reward $r:\Sc\times\A\times\Sc\to\R_+$ as
$$r((t,w),a,s')=R(t+1)\one_{\{s'=\circledast\}}$$
for $(t,w)\in\Nn\times[0,\alpha^{-1})$, and $r(\circledast,a,s')=0$ for every $a\in\A$ and $s'\in\Sc$. A reward is received exactly when the process first enters $\circledast$, namely when the testing-by-betting procedure rejects.

A \emph{reduced policy} is a sequence $S^\infty=(S_0,S_1,\dots)$, where, for each $n\geq0$, $S_n$ is a measurable kernel from $\Sc^{n+1}$ to $\A$. This means that for each $(s_0,\dots,s_n)\in\Sc^{n+1}$, $S_n(\cdot\mid s_0,\dots,s_n)$ is a Borel probability measure on $\A$, and the map $(s_0,\dots,s_n)\mapsto S_n(B\mid s_0,\dots,s_n)$ is Borel for every Borel set $B\subseteq\A$. Note that the reduced model allows arbitrary state-history-dependent randomised policies, and deterministic reduced policies are included as special cases, when each $S_n$ is a Dirac mass. A \emph{stationary} reduced policy is a Borel map $f:\Sc\to\A$.

For a given reduced policy $S^\infty$ and an initial state $s\in\Sc$, let $I(S^\infty,s)$ be its expected total reward. We define
$$v(s)=\sup_{S^\infty}I(S^\infty,s)\,,$$
where the supremum is over all reduced policies.

The reduced model defined above is a positive Borel dynamic-programming model. The following is the only non-elementary ingredient we will need to prove \Cref{thm:markovian_reduction}. This is based on \cite[Corollary~1.1]{schal1987stationary}.

\begin{lemma}
\label{lemma:stationary_reduced_model}
For every initial state $s\in\Sc$,
$$v(s)=\sup_{f\text{ stationary}}I(f,s)\,,$$
where the supremum on the right-hand side is over all Borel stationary policies $f:\Sc\to\A$.
\end{lemma}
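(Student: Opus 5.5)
The plan is to verify that the reduced model meets the hypotheses of \cite[Corollary~1.1]{schal1987stationary} and then read the lemma off directly. That corollary says: in a positive (nonnegative-reward) Borel dynamic programme, the optimal value over all randomised history-dependent policies equals the supremum over deterministic Borel stationary policies. We therefore need to check four things.
\begin{itemize}
\item[(i)] The state space $\Sc$ and the action space $\A$ are standard Borel.
\item[(ii)] The transition $q$ is a Borel stochastic kernel.
\item[(iii)] The one-step reward $r$ is Borel and nonnegative.
\item[(iv)] The model is positive in Sch\"al's sense: $r\geq0$ and the value is well defined in $[0,\infty]$.
\end{itemize}

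For (i): $\Sc$ is the disjoint union of $\Nn\times[0,\alpha^{-1})$, a Borel subset of a Polish space, with a singleton. Hence $\Sc$ is standard Borel, and $\A$ is standard Borel by assumption.

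For (ii): $\Phi$ is Borel because it is built from the Borel map $(w,a,x)\mapsto w\phi(a,x)$ and the Borel set $\{w\phi(a,x)\geq\alpha^{-1}\}$. Measurability of $(s,a)\mapsto q(B\mid s,a)=\int\one_B(\Phi(s,a,x))\,\dd\pi_1(x)$ then follows from Fubini/Tonelli applied to the jointly Borel indicator.

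For (iii): $r$ is Borel since $t\mapsto R(t+1)$ is defined on a countable discrete set and $\{s'=\circledast\}$ is Borel. It is nonnegative since $R\geq0$.

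For (iv): the total reward along any trajectory is at most $\sup_t R(t)=R(0)<\infty$, because $R$ is non-increasing. The reward is collected at most once, on first entry to the absorbing state $\circledast$. So the model is positive and even bounded, and the expected total reward $I(S^\infty,s)$ is well defined in $[0,R(0)]$ for every randomised history-dependent policy, via the Ionescu-Tulcea construction.

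The main obstacle is matching the precise form of Sch\"al's hypotheses. Some versions of the stationary-policy result for positive models need no compactness or continuity assumptions but conclude only $\varepsilon$-optimality, or require the value to be finite. The supremum formulation stated here is exactly what the $\varepsilon$-optimality statement gives: for each $s$ and $\varepsilon>0$ there is a Borel stationary $f$ with $I(f,s)\geq v(s)-\varepsilon$ when $v(s)<\infty$. Here finiteness holds because $v\leq R(0)$.

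If the corollary is phrased with policies chosen universally measurably, I would note that Sch\"al's result specifically yields Borel stationary selectors in the positive Borel case. If necessary, I would fall back on his uniform approximation by stationary policies, which is stated for Borel models.

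Finally, the inequality $\sup_f I(f,s)\leq v(s)$ is trivial, since deterministic stationary policies are special reduced policies (Dirac kernels). Combining it with the reverse inequality from the corollary gives the equality.
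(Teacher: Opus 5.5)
Your proposal is correct and follows the paper's route: check that the reduced model is a positive Borel programme (standard Borel spaces, Borel kernel, nonnegative Borel reward, value bounded because the reward is collected at most once), then invoke Sch\"al's Corollary~1.1 to obtain $\varepsilon$-optimal Borel stationary policies. The one detail the paper makes explicit and you leave implicit is that Sch\"al's value $\bar v$ ranges over policies depending on the full state--action history, a larger class than the paper's state-history reduced policies. The sandwich $\sup_f I(f,s)\le v(s)\le\bar v(s)\le\sup_f I(f,s)$ then closes the argument.
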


\begin{proof}
Let us first consider the larger class of full-history reduced policies. Such a policy is a sequence $\bar S^\infty=(\bar S_0,\bar S_1,\dots)$, where, for each $n\geq0$, $\bar S_n$ is a Borel measurable kernel from $\Sc\times(\A\times\Sc)^n$ to $\A$. Thus $\bar S_n$ may depend on the full reduced history $(s_0,a_0,s_1,\dots,a_{n-1},s_n)$. Let $\bar v(s)=\sup_{\bar S^\infty}I(\bar S^\infty,s)$, where the supremum is over all full-history reduced policies. Then $\sup_{f\text{ stationary}}I(f,s)\leq v(s)\leq\bar v(s)$, because stationary policies are included in the reduced policies used to define $v$, and every such reduced policy can be viewed as a full-history reduced policy which ignores the past actions.

We now apply \cite[Corollary~1.1]{schal1987stationary} to the model with full-history policies. Indeed, $\Sc$ and $\A$ are standard Borel spaces, the feasible action set is constant and equal to $\A$, the transition kernel $q$ is Borel measurable, and the one-step reward $r$ is non-negative and Borel measurable. Since the process can receive a non-zero reward at most once and $R$ is bounded, $\bar v(s)<\infty$ for every $s\in\Sc$. Hence the result applies. In the notation of \cite{schal1987stationary}, our $\bar v$ is the value function of the positive model and Corollary~1.1 says that stationary policies are persistently $\varepsilon$-optimal. Applied to the initial distribution concentrated at $s$, this means that for every $\varepsilon>0$ there exists a stationary policy $f_\varepsilon:\Sc\to\A$ such that $I(f_\varepsilon,s)\geq(1-\varepsilon)\bar v(s)$, with the case $\bar v(s)=0$ being trivial by non-negativity. Therefore $\bar v(s)=\sup_{f\text{ stationary}}I(f,s)$.
Combining this equality with the previous inequalities yields $v(s)=\sup_{f\text{ stationary}}I(f,s)$.
\end{proof}

\begin{proof}[Proof of the Markovian reduction]
We compare the original problem with the reduced control model. First, fix an arbitrary policy $A^\infty=(A_1,A_2,\dots)$ for the original problem. Let $s_0=(0,1)$ and define recursively
$$\widehat S_0^{A^\infty}(x^0)=s_0$$
and
$$\widehat S_{t+1}^{A^\infty}(x^{t+1})=\Phi(\widehat S_t^{A^\infty}(x^t),A_{t+1}(x^t),x_{t+1})\,.$$
These are all Borel maps. By induction over $t$, for any $x^\infty\in\X^\infty$,
$$\widehat S_t^{A^\infty}(x^t)=\begin{cases}(t,\widehat W_t^{A^\infty}(x^t))&\text{if }\tau^{A^\infty}(x^\infty)>t\,,\\\circledast&\text{if }\tau^{A^\infty}(x^\infty)\leq t\,.\end{cases}$$
Now, for any $x^\infty\in\X^\infty$,
$$\sum_{t=0}^\infty r\left(\widehat S_t^{A^\infty}(x^t),A_{t+1}(x^t),\widehat S_{t+1}^{A^\infty}(x^{t+1})\right)=R(\tau^{A^\infty}(x^\infty))\,.$$
Indeed, if $\tau^{A^\infty}(x^\infty)=T<\infty$, then the path enters $\circledast$ at step $T$, and the only non-zero term in the sum is for $t=T-1$, where it is equal to $R(T)$. On the other hand, if $\tau^{A^\infty}(x^\infty)=\infty$, then the path never enters $\circledast$ and the sum is $0=R(\infty)$.

Now, let us construct a reduced policy starting from $A^\infty$. For each $n\geq0$, consider the joint law under $X^\infty\sim\pi_1^\otimes$ of
$$\big(\widehat S_0^{A^\infty}(X^0),\dots,\widehat S_n^{A^\infty}(X^n),A_{n+1}(X^n)\big)\,.$$
Since $\Sc$ and $\A$ are standard Borel spaces, there exists a regular conditional distribution of the last coordinate given the first $n+1$ ones. Let $S_n^{A^\infty}$ be such a kernel, that is,
$$S_n^{A^\infty}(B\mid s_0,\dots,s_n)=\pi_1^\otimes\big(A_{n+1}(X^n)\in B\mid \widehat S_0^{A^\infty}=s_0,\dots,\widehat S_n^{A^\infty}=s_n\big)$$
for every Borel set $B\subseteq\A$. We claim that the state process generated by $(S_n^{A^\infty})_{n\geq0}$ under the transition kernel $q$ has the same law as the process $(\widehat S_t^{A^\infty}(X^t))_{t\geq0}$
generated by $A^\infty$ in the original model, under $X^\infty\sim\pi_1^\otimes$. Indeed, by construction the two processes start from the same state. Moreover, conditionally on $\widehat S_0^{A^\infty},\dots,\widehat S_n^{A^\infty}$, the conditional law of $A_{n+1}(X^n)$ is $S_n^{A^\infty}(\cdot\mid \widehat S_0^{A^\infty},\dots,\widehat S_n^{A^\infty})$, and $X_{n+1}$ is independent of $X^n$ with law $\pi_1$. Hence, for every Borel set $C\subseteq\Sc$,
$$\pi_1^\otimes\big(\widehat S_{n+1}^{A^\infty}\in C\mid \widehat S_0^{A^\infty},\dots,\widehat S_n^{A^\infty}\big)=\int_\A q(C\mid \widehat S_n^{A^\infty},a)\,\dd S_n^{A^\infty}(a\mid \widehat S_0^{A^\infty},\dots,\widehat S_n^{A^\infty})\,,$$
which is precisely the transition rule in the reduced model. It follows by induction on $n$ that, for every $n\geq0$, the law of the first $n+1$ states in the reduced model, started from $(0,1)$, controlled by $S^{A^\infty,\infty}$, and evolved according to $q$, is the same as the law of $(\widehat S_0^{A^\infty},\dots,\widehat S_n^{A^\infty})$ under $X^\infty\sim\pi_1^\otimes$.

Let $\widehat\tau^{A^\infty}=\inf\{t\geq1:\widehat S_t^{A^\infty}(X^t)=\circledast\}$. Since $\widehat S_t^{A^\infty}(x^t)=\circledast$ if and only if $\tau^{A^\infty}(x^\infty)\leq t$, we have $\widehat\tau^{A^\infty}=\tau^{A^\infty}$. Since the total reward in the reduced model is equal to $R$ evaluated at the first entrance time into $\circledast$, the equality in law just proved gives
$$I(S^{A^\infty,\infty},(0,1))=\E_{\pi_1^\otimes}[R(\widehat\tau^{A^\infty})]=\E_{\pi_1^\otimes}[R(\tau^{A^\infty})]\,.$$
Taking the supremum over original policies, we obtain
\begin{equation}\label{eq:ineq}\sup_{A^\infty}\E_{\pi_1^\otimes}[R(\tau^{A^\infty})]\leq\sup_{S^\infty}I(S^\infty,(0,1))=v(0,1)\,.\end{equation}
Now, we show that every stationary reduced policy gives rise to a Markov policy in the original model achieving the same expected total reward. Fix a stationary reduced policy $f:\Sc\to\A$. Define $a^{f,\infty}=(a_t^f)_{t\geq0}$ by
$$a_t^f(w)=f(t,w)$$
for $t\geq0$ and $w\in[0,\alpha^{-1})$. This is a Markov policy. Let $A^{f,\infty}$ be the corresponding ordinary policy, defined recursively by
$$A^{f,\infty}_{t+1}(x^t)=\begin{cases}a_t^f(\widehat W_t^{A^{f,\infty}}(x^t))&\text{if }\widehat W_t^{A^{f,\infty}}(x^t)<\alpha^{-1}\,,\\f(\circledast)&\text{if }\widehat W_t^{A^{f,\infty}}(x^t)\geq\alpha^{-1}\,.\end{cases}$$

Under $X^\infty\sim\pi_1^\otimes$, the state process induced by $A^{f,\infty}$ starts from $(0,1)$ and, before entering $\circledast$, moves from $(t,w)$ to $\circledast$ if $w\phi(a_t^f(w),X_{t+1})\geq\alpha^{-1}$, and to $(t+1,w\phi(a_t^f(w),X_{t+1}))$ otherwise. This is exactly the transition rule of the reduced model under the stationary policy $f$. Hence, by induction on $n$, the law of the first $n+1$ induced states is the same as the law of the first $n+1$ states in the reduced model, started from $(0,1)$, controlled by $f$, and evolved according to $q$. Therefore,
$$\E_{\pi_1^\otimes}[R(\tau^{A^{f,\infty}})]=I(f,(0,1))\,.$$
Taking the supremum over stationary policies $f$, and using \Cref{lemma:stationary_reduced_model}, we obtain
$$\sup_{a^\infty\text{ Markov}}\E_{\pi_1^\otimes}[R(\tau^{a^\infty})]\geq\sup_{f\text{ stationary}}I(f,(0,1))=v(0,1)\,.$$
Together with \eqref{eq:ineq}, this gives
$$\sup_{a^\infty\text{ Markov}}\E_{\pi_1^\otimes}[R(\tau^{a^\infty})]\geq v(0,1)\geq\sup_{A^\infty}\E_{\pi_1^\otimes}[R(\tau^{A^\infty})]\,.$$
The reverse inequality follows because Markov policies induce a subclass of admissible ordinary policies in the original problem. This proves the claim.
\end{proof}

\subsection[Proof of Theorem 2]{Proof of \Cref{thm:eprocess_equivalence}}
We now prove \Cref{thm:eprocess_equivalence}, which we restate here:
\begin{theorem*}[Simple-vs-simple e-process equivalence]
Fix $\pi_0,\pi_1\in\PR_\X$, with $\pi_1\ll\pi_0$, and let $(\A,\phi)=(\PR^{\pi_0},\phi^{\pi_0})$. Then, we have
$$\sup_{U^\infty\text{ e-process}}\E_{\pi_1^\otimes}[R(\tau^{U^\infty})]=\sup_{A^\infty}\E_{\pi_1^\otimes}[R(\tau^{A^\infty})]=\sup_{a^\infty\text{ Markov}}\E_{\pi_1^\otimes}[R(\tau^{a^\infty})]\,.$$
\end{theorem*}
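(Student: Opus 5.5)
The plan is to prove the two inequalities that are not already given. The second equality is exactly \Cref{thm:markovian_reduction} applied with $(\A,\phi)=(\PR^{\pi_0},\phi^{\pi_0})$. For the first equality, one direction is immediate. For any policy $A^\infty$, the wealth $\widehat W^{A^\infty}$ is a non-negative $\pi_0^\otimes$-martingale (even a supermartingale suffices), since each one-round bet has $\E_{\pi_0}[\phi^{\pi_0}(\nu,\cdot)]=1$. By optional stopping and Fatou, $\E_{\pi_0^\otimes}[\widehat W_\tau]\leq 1$ for every stopping time $\tau$, so $\widehat W^{A^\infty}$ is an e-process with the same rejection time. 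This gives $\sup_{A^\infty}\leq\sup_{U^\infty}$.

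For the reverse inequality, fix an e-process $U^\infty$. I would invoke the result of \cite{ramdas2020admissible} that, for a simple null, every e-process is dominated by a non-negative $\pi_0^\otimes$-martingale $M$ with $M_0\leq 1$, so that $U_t\leq M_t$ for all $t$, $\pi_0^\otimes$-a.s. Since $\pi_1\ll\pi_0$ gives $\pi_1^{\otimes t}\ll\pi_0^{\otimes t}$ on every finite prefix, the domination also holds $\pi_1^\otimes$-a.s. at each $t$. Hence $\tau^{M}\leq\tau^{U^\infty}$ $\pi_1^\otimes$-a.s., and because $R$ is non-increasing, $\E_{\pi_1^\otimes}[R(\tau^M)]\geq\E_{\pi_1^\otimes}[R(\tau^{U^\infty})]$. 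We may also replace $M$ by $M/M_0$ when $M_0>0$, which only helps. If $M_0=0$, then $M\equiv0$ a.s., and the case is trivial because the reward is $0=R(\infty)$ unless $U$ rejects, which it a.s.\ cannot.

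It then remains to realise a martingale with $M_0=1$ as the wealth of a predictable policy with bets in $\PR^{\pi_0}$. On $\{M_{t-1}(x^{t-1})>0\}$, I would define $\nu_t(x^{t-1})$ by
\[
\frac{\dd\nu_t(x^{t-1})}{\dd\pi_0}(x)=\frac{M_t(x^{t-1},x)}{M_{t-1}(x^{t-1})}.
\]
The martingale property gives $\int M_t(x^{t-1},x)\,\dd\pi_0(x)=M_{t-1}(x^{t-1})$ for $\pi_0^{\otimes(t-1)}$-a.e.\ $x^{t-1}$, so this is a probability measure dominated by $\pi_0$. On the exceptional null set and on $\{M_{t-1}=0\}$, I would set $\nu_t=\pi_0$. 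Then $\widehat W_t=M_t$ holds $\pi_0^\otimes$-a.s., hence $\pi_1^\otimes$-a.s., on the event that $M$ has not vanished. If $M$ does vanish, both processes are zero from that point on; with the $\pi_0$ bet the wealth stays at $0$, since $0\cdot1=0$. The rejection times therefore coincide $\pi_1^\otimes$-a.s., and the objectives agree.

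The main obstacle is measurability: the map $x^{t-1}\mapsto\nu_t(x^{t-1})\in\PR^{\pi_0}$ must be Borel with respect to the standard Borel structure from \Cref{lemma:canonical_action_space}. My plan is to use that the Borel structure on $\PR_\X$ is generated by the evaluations $\nu\mapsto\nu(B)$. Then $x^{t-1}\mapsto\nu_t(x^{t-1})(B)=\int_B M_t(x^{t-1},x)\,\dd\pi_0(x)/M_{t-1}(x^{t-1})$ is Borel by Fubini, and the exceptional set where normalisation fails is Borel. Finally, since the paper fixes specific Borel versions of $\dd\nu/\dd\pi_0$, I need the identity $\phi^{\pi_0}(\nu_t(x^{t-1}),x_t)=M_t(x^t)/M_{t-1}(x^{t-1})$ to hold jointly a.e. I would obtain this from joint Borelness of $\phi^{\pi_0}$ and a Fubini argument, noting that the two sides agree $\pi_0$-a.e.\ in $x_t$ for each fixed $x^{t-1}$.
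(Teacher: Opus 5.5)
Your proposal is correct and follows essentially the same route as the paper. The paper also uses the easy inclusion via the canonical wealth being a $\pi_0^\otimes$-martingale, and dominates an arbitrary e-process by a martingale via \cite{ramdas2020admissible}, transferred to $\pi_1^\otimes$ through finite-horizon absolute continuity. It then realises that martingale as canonical wealth through ratio densities made Borel via evaluation maps, and invokes \Cref{thm:markovian_reduction} for the Markov equality.

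The only differences are cosmetic. You set the bet to $\pi_0$ on the null set where the one-step normalisation fails, whereas the paper first modifies the Doob--Dynkin versions $m_t$ there so that the martingale identity holds everywhere. Your explicit Fubini argument for the joint almost-everywhere identity $\phi^{\pi_0}(\nu_t(x^{t-1}),x_t)=M_t(x^t)/M_{t-1}(x^{t-1})$ is, if anything, slightly more careful than the paper's.
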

First, we state a lemma that ensures that the pair $(\A^{\pi_0},\phi^{\pi_0})$, defined in \Cref{sec:simple}, is a valid control framework for the simple-vs-simple case.
\begin{lemma}
\label{lemma:canonical_action_space}
Fix $\pi_0\in\PR_\X$ and define $\PR^{\pi_0}=\{\nu\in\PR_\X:\nu\ll\pi_0\}$. Then, $\PR^{\pi_0}$ is a standard Borel space. Moreover, there exists a Borel map $\phi^{\pi_0}:\PR^{\pi_0}\times\X\to\R_+$ such that, for every $\nu\in\PR^{\pi_0}$, the map $x\mapsto\phi^{\pi_0}(\nu,x)$ is a version of the Radon--Nikodym derivative $\dd\nu/\dd\pi_0$. Equivalently, for every Borel set $B\subseteq\X$,
$$\nu(B)=\int_B\phi^{\pi_0}(\nu,x)\,\dd\pi_0(x)\,.$$
\end{lemma}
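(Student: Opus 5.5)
The plan is to identify $\PR^{\pi_0}$ with a Borel subset of $\PR_\X$, and then build a jointly Borel density by approximating with ratios over a countable generating algebra.

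\emph{Standard Borel structure.} Equip $\PR_\X$ with the weak topology for a Polish topology compatible with the Borel structure of $\X$; then $\PR_\X$ is Polish and its Borel $\sigma$-field is generated by the evaluation maps $\nu\mapsto\nu(B)$. Fix a countable algebra $\{B_k\}_{k\ge1}$ generating the Borel $\sigma$-field of $\X$. The key claim is that
$$\nu\ll\pi_0\iff\forall m\ \exists n\ \forall k:\ \pi_0(B_k)<1/n\Rightarrow \nu(B_k)\le 1/m,$$
which is the $\varepsilon$--$\delta$ characterisation of absolute continuity for finite measures. The only nontrivial direction is extending from the algebra to all Borel sets, and this follows by outer approximation of a Borel set by sets of the algebra simultaneously in $\nu+\pi_0$. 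The right-hand side is a countable Boolean combination of Borel conditions on $\nu$, so $\PR^{\pi_0}$ is a Borel subset of a Polish space and hence standard Borel.

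\emph{Jointly Borel density.} Let $\mathcal F_n$ be the finite $\sigma$-algebra generated by $B_1,\dots,B_n$, with atoms $C^n_1,\dots,C^n_{m_n}$. Define
$$\phi_n(\nu,x)=\sum_j \frac{\nu(C^n_j)}{\pi_0(C^n_j)}\one_{C^n_j}(x),$$
with the convention $0/0=0$. Each $\phi_n$ is jointly Borel, since $\nu\mapsto\nu(C)$ is Borel and $x\mapsto\one_C(x)$ is Borel. For fixed $\nu\ll\pi_0$, $\phi_n(\nu,\cdot)=\E_{\pi_0}[\dd\nu/\dd\pi_0\mid\mathcal F_n]$ holds $\pi_0$-a.s. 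This is a uniformly integrable $\pi_0$-martingale, so by Lévy's upward theorem it converges $\pi_0$-a.s. and in $L^1$ to $\dd\nu/\dd\pi_0$. Set $\phi^{\pi_0}(\nu,x)=\limsup_n\phi_n(\nu,x)$ where this is finite, and $0$ otherwise. This map is jointly Borel as a limsup of Borel maps, and for each $\nu$ it agrees $\pi_0$-a.s. with $\dd\nu/\dd\pi_0$, hence is a version of the density. The identity $\nu(B)=\int_B\phi^{\pi_0}\,\dd\pi_0$ then holds for all Borel $B$.

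The main obstacle is making the Borel-set characterisation of absolute continuity rigorous, since measurability of $\PR^{\pi_0}$ is not automatic. If the approximation argument gets delicate, the fallback is to define
$$\PR^{\pi_0}=\Bigl\{\nu:\ \int\phi^{\pi_0}(\nu,x)\,\pi_0(\dd x)=1\Bigr\},$$
where $\phi^{\pi_0}$ is built as above for all $\nu$. Because $\limsup_n\phi_n$ is always the density of the absolutely continuous part of $\nu$, this integral equals $\nu_{ac}(\X)$, so the condition holds exactly when $\nu\ll\pi_0$. Moreover, $\nu\mapsto\int\phi^{\pi_0}(\nu,\cdot)\,\dd\pi_0$ is Borel by Tonelli/kernel measurability, which makes $\PR^{\pi_0}$ a Borel set directly. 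This fallback route avoids the $\varepsilon$--$\delta$ argument entirely.
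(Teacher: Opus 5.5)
Your proposal is correct, and it takes a genuinely different, self-contained route. The paper proves the lemma entirely by citation. Kechris (Theorem 17.23) makes $\PR_\X$ standard Borel. Lange (1973, Theorem 3.5) makes $\PR^{\pi_0}$ a Borel subset of it. Kallenberg (2014, Lemma 2.1), applied to the kernel $Q(\nu,\cdot)=\nu$ on $\PR^{\pi_0}$, supplies a jointly measurable density.

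You prove the two nontrivial ingredients directly. Borelness comes from the countable $\varepsilon$--$\delta$ characterisation over a countable generating algebra. For the converse direction, approximating a Borel set in $(\nu+\pi_0)$-symmetric difference by an algebra set suffices; you do not need outer approximation by a single algebra set. Joint measurability comes from ratios over finite partitions plus L\'evy's upward theorem, which is essentially the argument behind Kallenberg's lemma.

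The paper's route buys brevity. Yours buys transparency, an explicit formula $\phi^{\pi_0}(\nu,x)=\limsup_n \nu(C^n(x))/\pi_0(C^n(x))$, and, via your fallback, the observation that a single construction certifies both claims.

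The fallback does need slightly more than L\'evy's upward theorem. For $\nu\not\ll\pi_0$, the restrictions $\nu|_{\mathcal F_n}$ need not be dominated by $\pi_0|_{\mathcal F_n}$. You therefore need a convention for $\nu(C)/0$ when $\nu(C)>0$, which is harmless because such atoms are $\pi_0$-null. You must also show that the singular part's ratios vanish $\pi_0$-a.s. One way is through the bounded $(\nu_s+\pi_0)$-martingale of densities $\dd\pi_0/\dd(\nu_s+\pi_0)$ on $\mathcal F_n$, which converges to $\one_{N^c}$. This is classical (Doob's martingale proof of the Lebesgue decomposition), but it should be cited or proved. Your main route does not depend on it.
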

\begin{proof}
We endow $\PR_\X$ with the $\sigma$-field generated by the evaluation maps $\nu\mapsto\nu(B)$, where $B\subseteq\X$ is Borel. Since $\X$ is standard Borel, $\PR_\X$ is standard Borel \citep[Theorem~17.23]{kechris1995classical}\footnote{The result in \cite{kechris1995classical} is stated for Polish spaces. We apply it to a Polish topology on $\X$ generating the given Borel $\sigma$-field. The resulting Borel $\sigma$-field on $\PR_\X$ is the one generated by the evaluation maps.}. Moreover, since $\X$ admits a Polish topology generating the same Borel $\sigma$-field, \cite[Theorem~3.5]{lange1973borel} implies that $\PR^{\pi_0}$ is a Borel subset of $\PR_\X$. Therefore $\PR^{\pi_0}$ is standard Borel.

For $\nu\in\PR^{\pi_0}$ and any Borel set $B\subseteq\X$, define $Q(\nu,B)=\nu(B)$. Then $Q$ is a probability kernel from $\PR^{\pi_0}$ to $\X$, since $Q(\nu,\cdot)=\nu$ and the evaluation maps $\nu\mapsto\nu(B)$ are Borel. Moreover, $Q(\nu,\cdot)\ll\pi_0$ for every $\nu\in\PR^{\pi_0}$. By \cite[Lemma~2.1]{kallenberg2014stationary}, there exists a Borel map $\rho:\PR^{\pi_0}\times\X\to[0,\infty)$ such that
$$Q(\nu,B)=\int_B\rho(\nu,x)\,\dd\pi_0(x)$$
for all $\nu\in\PR^{\pi_0}$ and every Borel set $B\subseteq\X$. Taking $\phi^{\pi_0}=\rho$ gives the claim.
\end{proof}

\begin{proof}[Proof of \Cref{thm:eprocess_equivalence}]
Let $(\A,\phi)=(\PR^{\pi_0},\phi^{\pi_0})$ as in \Cref{lemma:canonical_action_space}. We first show that every policy in the canonical testing-by-betting model induces an e-process. Fix a policy $A^\infty$. Under $\pi_0^\otimes$, the associated wealth process satisfies
$$\widehat W_t^{A^\infty}=\widehat W_{t-1}^{A^\infty}\phi^{\pi_0}(A_t(X^{t-1}),X_t)\,,$$
with $\widehat W_0^{A^\infty}=1$. Since $A_t(X^{t-1})$ is $\sigma(X^{t-1})$-measurable and
$$\int_\X\phi^{\pi_0}(\nu,x)\,\dd\pi_0(x)=1$$
for every $\nu\in\PR^{\pi_0}$, we have
$$\E_{\pi_0^\otimes}\left[\widehat W_t^{A^\infty}\mid X^{t-1}\right]=\widehat W_{t-1}^{A^\infty}\,.$$
Thus $\widehat W^{A^\infty}$ is a non-negative $\pi_0^\otimes$-martingale with initial value one, and hence an e-process. Therefore,
\begin{equation}\label{eq:ineqeasy}\sup_{A^\infty}\E_{\pi_1^\otimes}[R(\tau^{A^\infty})]\leq \sup_{U^\infty\text{ e-process}}\E_{\pi_1^\otimes}[R(\tau^{U^\infty})]\,.\end{equation}

We now prove the reverse inequality. Fix an arbitrary e-process $U^\infty=(U_t)_{t\geq0}$ under the point null $\pi_0^\otimes$. By \cite[Lemma~6]{ramdas2020admissible}, there exists a non-negative $\pi_0^\otimes$-martingale $M^\infty=(M_t)_{t\geq0}$, which we may take to satisfy $M_0=1$, such that $U_t\leq M_t$ for every $t\geq0$, $\pi_0^\otimes$-almost surely. Since $\pi_1\ll\pi_0$, we have $\pi_1^{\otimes t}\ll\pi_0^{\otimes t}$ for every finite $t$. Hence, intersecting over countably many times, $U_t\leq M_t$ for every $t\geq0$, $\pi_1^\otimes$-almost surely. It follows that $\tau^{M^\infty}\leq\tau^{U^\infty}$, $\pi_1^\otimes$-almost surely. Since $R$ is non-increasing,
$$\E_{\pi_1^\otimes}[R(\tau^{M^\infty})]\geq\E_{\pi_1^\otimes}[R(\tau^{U^\infty})]\,.$$

It remains to represent $M^\infty$ as a canonical wealth process. Since $M_t$ is $\sigma(X^t)$-measurable and $\X^t$ is standard Borel, the Doob--Dynkin lemma gives Borel maps $m_t:\X^t\to\R_+$ such that $M_t=m_t(X^t)$, $\pi_0^\otimes$-almost surely. We take $m_0\equiv1$. For $t\geq1$ and $x^{t-1}\in\X^{t-1}$, define
$$b_t(x^{t-1})=\int_\X m_t(x^{t-1},x)\,\dd\pi_0(x)\,.$$
By the martingale property, $b_t(X^{t-1})=m_{t-1}(X^{t-1})$, $\pi_0^{\otimes(t-1)}$-almost surely. Let
$$N_t=\{x^{t-1}\in\X^{t-1}:b_t(x^{t-1})\neq m_{t-1}(x^{t-1})\}\,.$$
Then $N_t$ is Borel and $\pi_0^{\otimes(t-1)}(N_t)=0$. Replacing $m_t$ by
$$\widetilde m_t(x^{t-1},x)=m_t(x^{t-1},x)\one_{N_t^c}(x^{t-1})+m_{t-1}(x^{t-1})\one_{N_t}(x^{t-1})\,,$$
we still have $M_t=\widetilde m_t(X^t)$, $\pi_0^\otimes$-almost surely, and now
$$\int_\X \widetilde m_t(x^{t-1},x)\,\dd\pi_0(x)=m_{t-1}(x^{t-1})$$
for every $x^{t-1}\in\X^{t-1}$.

Define
$$e_t(x^{t-1},x)=\begin{cases}\widetilde m_t(x^{t-1},x)/m_{t-1}(x^{t-1})&\text{if }m_{t-1}(x^{t-1})>0\,,\\ 1&\text{if }m_{t-1}(x^{t-1})=0\,.\end{cases}$$
Then $e_t$ is Borel, non-negative, and satisfies
$$\int_\X e_t(x^{t-1},x)\,\dd\pi_0(x)=1$$
for every $x^{t-1}\in\X^{t-1}$. Hence, for each $x^{t-1}\in\X^{t-1}$, the formula
$$\nu_t(x^{t-1},B)=\int_B e_t(x^{t-1},x)\,\dd\pi_0(x)$$
defines a probability measure $\nu_t(x^{t-1},\,\cdot\,)\in\PR^{\pi_0}$. Moreover, since $x^{t-1}\mapsto\nu_t(x^{t-1},B)$ is Borel for every Borel set $B\subseteq\X$, the map $x^{t-1}\mapsto\nu_t(x^{t-1},\,\cdot\,)$ is Borel from $\X^{t-1}$ to $\PR^{\pi_0}$.

Now define a policy $A^\infty$ by setting $A_t(x^{t-1})=\nu_t(x^{t-1},\,\cdot\,)$. By the defining property of $\phi^{\pi_0}$ in \Cref{lemma:canonical_action_space}, for each $x^{t-1}\in\X^{t-1}$ the map $x\mapsto\phi^{\pi_0}(A_t(x^{t-1}),x)$ is a version of $e_t(x^{t-1},\,\cdot\,)$, since $e_t(x^{t-1},\,\cdot\,)$ is a version of $\dd\nu_t(x^{t-1},\,\cdot\,)/\dd\pi_0$. Hence $\phi^{\pi_0}(A_t(x^{t-1}),x)=e_t(x^{t-1},x)$ for $\pi_0$-almost every $x$, and therefore also for $\pi_1$-almost every $x$, since $\pi_1\ll\pi_0$. By induction over $t$, we get
$$\widehat W_t^{A^\infty}=M_t$$
for every $t\geq0$, $\pi_1^\otimes$-almost surely. Consequently, $\tau^{A^\infty}=\tau^{M^\infty}$, $\pi_1^\otimes$-almost surely, and so
$$\E_{\pi_1^\otimes}[R(\tau^{A^\infty})]=\E_{\pi_1^\otimes}[R(\tau^{M^\infty})]\geq\E_{\pi_1^\otimes}[R(\tau^{U^\infty})]\,.$$
Taking the supremum over e-processes gives
$$\sup_{U^\infty\text{ e-process}}\E_{\pi_1^\otimes}[R(\tau^{U^\infty})]\leq\sup_{A^\infty}\E_{\pi_1^\otimes}[R(\tau^{A^\infty})]\,.$$

Combining this with \eqref{eq:ineqeasy} yields
$$\sup_{U^\infty\text{ e-process}}\E_{\pi_1^\otimes}[R(\tau^{U^\infty})]=\sup_{A^\infty}\E_{\pi_1^\otimes}[R(\tau^{A^\infty})]\,.$$
Finally, applying \Cref{thm:markovian_reduction} to the canonical action space $(\PR^{\pi_0},\phi^{\pi_0})$ gives
$$\sup_{A^\infty}\E_{\pi_1^\otimes}[R(\tau^{A^\infty})]=\sup_{a^\infty\text{ Markov}}\E_{\pi_1^\otimes}[R(\tau^{a^\infty})]\,,$$
and the claim follows.
\end{proof}

\subsection[Proof of Proposition 2]{Proof of \Cref{prop:hard-deadline-np}}
We provide here a proof of \Cref{prop:hard-deadline-np}, which we restate here.
\begin{proposition*}
Consider testing $\pi_0^\otimes$ against $\pi_1^\otimes$, with $\pi_1\ll\pi_0$. Then there exists a Borel set $\Gamma^\star\subseteq\X^T$, viewed as an event depending only on the first $T$ observations, such that $\pi_0^{\otimes}(\Gamma^\star)\leq\alpha$ and
$$\sup_{U^\infty\text{ e-process}}\pi_1^\otimes\bigl(\tau^{U^\infty}\leq T\bigr)=\pi_1^{\otimes}(\Gamma^\star)=\max\bigl\{\pi_1^{\otimes}(\Gamma)\,:\,\Gamma\subseteq\X^T\text{ Borel}\,,\,\pi_0^{\otimes}(\Gamma)\leq\alpha\bigr\}\,.$$
Fixing Borel versions of the conditional probabilities, the supremum  is achieved by the $\pi_0^\otimes$-martingale
$U_t^\star(x^t)=\pi_0^\otimes(\Gamma^\star\mid x^t)/\pi_0^\otimes(\Gamma^\star)$ if $\pi_0^\otimes(\Gamma^\star)>0$, and by the constant e-process $U_t\equiv 1$ otherwise.
\end{proposition*}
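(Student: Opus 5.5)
We prove two inequalities around the fixed-sample Neyman--Pearson value
$$\beta^\star=\sup\bigl\{\pi_1^{\otimes}(\Gamma)\,:\,\Gamma\subseteq\X^T\text{ Borel},\ \pi_0^{\otimes}(\Gamma)\leq\alpha\bigr\}\,,$$
and then check that the supremum is attained by some $\Gamma^\star$.

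\emph{Upper bound.} Fix any e-process $U^\infty$ for $\pi_0^\otimes$ and let $\Gamma_U=\{\tau^{U^\infty}\leq T\}$. Since $\tau^{U^\infty}$ is a stopping time, $\Gamma_U$ is determined by $x^T$ and is Borel in $\X^T$. Ville-type validity of e-processes gives $\pi_0^\otimes(\Gamma_U)\leq\alpha$. Concretely, apply $\E_{\pi_0^\otimes}[U_\sigma]\leq1$ to the bounded stopping time $\sigma=\tau^{U^\infty}\wedge T$, and note $U_\sigma\geq\alpha^{-1}$ on $\Gamma_U$. Hence $\pi_1^\otimes(\tau^{U^\infty}\leq T)=\pi_1^\otimes(\Gamma_U)\leq\beta^\star$.

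\emph{Existence of $\Gamma^\star$.} This is the classical Neyman--Pearson lemma on $\X^T$ for $P_0=\pi_0^{\otimes T}$ and $P_1=\pi_1^{\otimes T}\ll P_0$, with likelihood ratio $\Lambda=\prod_{t\le T}\dd\pi_1/\dd\pi_0(x_t)$. Among randomised tests, the optimum rejects on $\{\Lambda>k\}$ and randomises on $\{\Lambda=k\}$. We need a \emph{non-randomised} Borel set that attains the maximum over sets, so we do not pass through randomised tests. If $P_0(\Lambda=k)$ contains atoms, a threshold set may fail to be optimal among sets; this is the Bernoulli integer-programme issue. Here is the plan.
\begin{itemize}
\item[(i)] Decompose $P_0$ into its atomic and non-atomic parts. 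On the non-atomic part, sets can carve out any prescribed $P_0$-mass inside a level set $\{\Lambda=k\}$ by Lyapunov/Sierpi\'nski.
\item[(ii)] Show that the set value $\sup\{P_1(\Gamma):P_0(\Gamma)\le\alpha\}$ is attained by compactness. Take a maximising sequence $\Gamma_n$, with indicators converging weak-$*$ in $L^\infty(P_0)$ to some $f\in[0,1]$.
\item[(iii)] Use a Lyapunov-convexity/extreme-point argument (via the vector measure $(P_0,P_1)$ restricted to the non-atomic part, with atoms handled as finitely or countably many $0/1$ choices) to replace $f$ by an indicator with the same $(P_0,P_1)$ masses.
\end{itemize}
The countable atomic part is dealt with by a knapsack-type argument: since $\sum_{\text{atoms}}P_0<\infty$, a diagonal subsequence stabilises the choices on atoms. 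I expect this attainment step to be the main technical obstacle. The rest is the soft part of the argument.

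\emph{Lower bound via Doob.} Let $p=\pi_0^\otimes(\Gamma^\star)$. If $p=0$, then $\pi_1^\otimes(\Gamma^\star)=0$ by absolute continuity, so $\beta^\star=0$ and the constant e-process is trivially optimal. Otherwise, set $U_t^\star=\pi_0^\otimes(\Gamma^\star\mid x^t)/p$ using Borel versions, with $U_t^\star=\one_{\Gamma^\star}/p$ for $t\ge T$. This is a non-negative $\pi_0^\otimes$-martingale with $U_0^\star=1$. By optional stopping, with Fatou for unbounded times, it is an e-process. On $\Gamma^\star$ we have $U_T^\star=1/p\geq\alpha^{-1}$, so $\Gamma^\star\subseteq\{\tau^{U^\star}\leq T\}$ $\pi_0^\otimes$-a.s., and hence also $\pi_1^\otimes$-a.s. by $\pi_1^{\otimes T}\ll\pi_0^{\otimes T}$. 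Therefore $\pi_1^\otimes(\tau^{U^\star}\le T)\ge\pi_1^\otimes(\Gamma^\star)=\beta^\star$.

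Combined with the upper bound, this gives equality throughout and shows that $U^\star$ attains the supremum.
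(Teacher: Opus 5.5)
Your proof is correct and has the same three-part structure as the paper's. The upper bound applies the e-process property at the bounded stopping time $\tau^{U^\infty}\wedge T$ followed by Markov's inequality. The lower bound uses the Doob martingale $\pi_0^\otimes(\Gamma^\star\mid x^t)/\pi_0^\otimes(\Gamma^\star)$, with the case $\pi_0^\otimes(\Gamma^\star)=0$ handled by absolute continuity. The third part is the existence of a maximising Borel set $\Gamma^\star$.

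The only real difference is in the attainment step, which you flag as the main obstacle. The paper settles it in one line. By Lyapunov's theorem, the range $\{(\pi_0^{\otimes T}(\Gamma),\pi_1^{\otimes T}(\Gamma))\}$ of the $\R^2$-valued measure is closed even when atoms are present. Its intersection with $(-\infty,\alpha]\times\R$ is therefore compact and non-empty, and a point maximising the second coordinate is realised by some Borel set.

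Your route re-derives exactly that closedness statement by hand, and it works:
\begin{itemize}
\item weak-$*$ compactness of indicators in $L^\infty(P_0)$, which is sequential because $L^1(P_0)$ is separable on a standard Borel space;
\item the bang-bang form of Lyapunov convexity on the complement of the atoms, where the vector measure $(P_0,P_1)$ is non-atomic because $P_0$ is non-atomic there and dominates $P_1$;
\item separate treatment of the atoms.
\end{itemize}

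You can simplify the atomic part. Weak-$*$ convergence already gives $\one_{\Gamma_n}(a)\to f(a)$ at each atom $a$ (test against $\one_{\{a\}}/P_0(\{a\})$). Since each $\one_{\Gamma_n}(a)\in\{0,1\}$, this forces $f(a)\in\{0,1\}$, so no separate diagonal or knapsack argument is needed. Also, your step (i) about carving level sets of $\Lambda$ is never used.

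The paper's citation buys brevity. Your argument buys self-containedness. It also makes explicit that the fractional (randomised) optimum can be de-randomised exactly on the non-atomic part, with atoms contributing only $0/1$ choices.
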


\begin{proof}
    First, we show that there exists $\Gamma^\star$ as required by the statement. Define the finite $\R^2$-valued measure $\mu$ on $\X^T$, given by
    $$\mu(\Gamma) = \big(\pi_0^{\otimes T}(\Gamma),\pi_1^{\otimes T}(\Gamma)\big)\,.$$
    Its range is defined as
    $$\mathcal R_\mu = \{\mu(\Gamma)\,:\,\Gamma\subseteq\X^T\;\text{Borel}\}\,.$$
    By Liapounoff's vector-measure theorem \citep[Theorem 2]{liapounoff1940fonctions}\footnote{Paper in Russian, with French summary at the end.}, $\mathcal R_\mu$ is closed in $\R^2$, and hence compact since it is bounded. In particular, the set
    $$\mathcal C = \mathcal R_\mu \cap \big((-\infty,\alpha]\times \R\big)$$ is compact and non-empty (as $(0,0)\in\mathcal C$). Hence, there exists $(p^\star, q^\star)\in\mathcal C$ such that
    $$q^\star = \max\{q\,:\,(p,q)\in\mathcal C\}\,.$$
    Then, there is a Borel $\Gamma^\star\in\X^T$ such that $\mu(\Gamma^\star) = (p^\star,q^\star)$. In particular, $\pi_0^{\otimes T}(\Gamma^\star) \leq \alpha$ and $$\pi_1^{\otimes T}(\Gamma^\star) = \max\{\pi_1^{\otimes T}(\Gamma)\,:\,\Gamma\subseteq\X^T\;\text{Borel}\,,\,\pi_0^{\otimes T}(\Gamma)\leq\alpha\}\,.$$

    We now show that this set gives the optimal hard-deadline value. Let $U^\infty$ be any e-process under $\pi_0^\otimes$, and define
    $$\Gamma_U=\left\{x^T\in\X^T:\max_{0\leq t\leq T}U_t(x^t)\geq\alpha^{-1}\right\}\,.$$
    Then $\Gamma_U$ is Borel and $\{\tau^{U^\infty}\leq T\}=\Gamma_U$. For the bounded stopping time $\tau^{U^\infty}\wedge T$, the e-process property gives $\E_{\pi_0^\otimes}\left[U_{\tau^{U^\infty}\wedge T}\right]\leq1$. On the event $\{\tau^{U^\infty}\leq T\}$ we have $U_{\tau^{U^\infty}\wedge T}\geq\alpha^{-1}$, and therefore
    $$\pi_0^{\otimes T}(\Gamma_U)=\pi_0^\otimes(\tau^{U^\infty}\leq T)\leq \pi_0^{\otimes T}(U_{\tau^{U^\infty}\wedge T}\geq\alpha^{-1})\leq\alpha\,,$$ by Markov's inequality.
    By the definition of $\Gamma^\star$, it follows that $\pi_1^\otimes(\tau^{U^\infty}\leq T)=\pi_1^{\otimes T}(\Gamma_U)\leq\pi_1^{\otimes T}(\Gamma^\star)$.
    Taking the supremum over all e-processes gives
    $$\sup_{U^\infty\text{ e-process}}\pi_1^\otimes(\tau^{U^\infty}\leq T)\leq\pi_1^{\otimes T}(\Gamma^\star)\,.$$

    It remains to show that this upper bound is attained. Suppose first that $\pi_0^{\otimes T}(\Gamma^\star)>0$. Fix Borel versions of the conditional probabilities
    $$x^t\mapsto\pi_0^\otimes(\Gamma^\star\mid X^t=x^t)$$
    for $0\leq t\leq T$, and define
    $$U_t^\star(x^t)=\frac{\pi_0^\otimes(\Gamma^\star\mid X^t=x^t)}{\pi_0^{\otimes T}(\Gamma^\star)}$$
    for $0\leq t\leq T$. For $t>T$, set $U_t^\star(x^t)=U_T^\star(x^T)$. Then $U^{\star,\infty}$ is a non-negative $\pi_0^\otimes$-martingale with initial value one, and hence an e-process. Moreover, for $x^T\in\Gamma^\star$,
    $$U_T^\star(x^T)=\frac{1}{\pi_0^{\otimes T}(\Gamma^\star)}\geq\alpha^{-1}\,,$$
    because $\pi_0^{\otimes T}(\Gamma^\star)\leq\alpha$. Hence $\Gamma^\star\subseteq\{x^T:\tau^{U^{\star,\infty}}(x^\infty)\leq T\}$, and so
    $$\pi_1^{\otimes T}(\Gamma^\star)\leq\pi_1^\otimes(\tau^{U^{\star,\infty}}\leq T)\,.$$
    Combining this with the upper bound already proved gives
    $$\pi_1^\otimes(\tau^{U^{\star,\infty}}\leq T)=\pi_1^{\otimes T}(\Gamma^\star)\,.$$

    Finally, if $\pi_0^{\otimes T}(\Gamma^\star)=0$, then $\pi_1^{\otimes T}(\Gamma^\star)=0$ since $\pi_1\ll\pi_0$. In this case the constant process $U_t^\star\equiv1$ is an e-process and satisfies
    $$\pi_1^\otimes(\tau^{U^{\star,\infty}}\leq T)=0=\pi_1^{\otimes T}(\Gamma^\star)\,.$$
    This proves the claim.
\end{proof}

\subsection[Proof of Proposition 3]{Proof of \Cref{prop:approximation-constant-strategy}}
Here we prove \Cref{prop:approximation-constant-strategy}, which we restate for convenience.
\begin{proposition*}
Let $(a_\star,\eta_\star)$ satisfy \eqref{eq:cond}, with $\eta_\star\in(0,1)$ and $\phi(a_\star,\,\cdot\,)\leq \Phi$, $\pi_1$-almost surely. Then
$$(\alpha/\Phi)^{\eta_\star}\leq \E_{\pi_1^\otimes}[e^{-\tau^{a_\star}/T}] \leq \sup_{a^\infty\text{ Markov}}\E_{\pi_1^\otimes}[e^{-\tau^{a^\infty}/T}]\leq \alpha^{\eta_\star}\,.$$
\end{proposition*}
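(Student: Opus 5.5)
The plan is to build a supermartingale and a martingale out of the power function $z\mapsto z^{\eta_\star}$ combined with the discount $e^{-t/T}$, and then apply optional stopping in each direction. Throughout, $z=\alpha w$ denotes the rescaled wealth, so $z_0=\alpha$ and rejection happens once $z\geq1$. The middle inequality in the statement is immediate, because the constant policy $a_\star$ is itself Markov. So the work is in the two outer bounds.

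\emph{Upper bound.} Fix an arbitrary Markov policy $a^\infty$ and write $Z_t=\alpha\widehat W_t$. Consider
$$Y_t=e^{-t/T}\min\{Z_t,1\}^{\eta_\star},$$
stopped at $\tau=\tau^{a^\infty}$. Before rejection we have $Z_{t-1}<1$. Using $\min\{z,1\}^{\eta}\leq z^{\eta}$ and then \eqref{eq:cond}, which says $\sup_a\E_{\pi_1}[\phi(a,\cdot)^{\eta_\star}]=e^{1/T}$, we get
$$\E_{\pi_1^\otimes}[Y_t\mid X^{t-1}]\leq e^{-t/T}Z_{t-1}^{\eta_\star}e^{1/T}=Y_{t-1}.$$
Hence $(Y_{t\wedge\tau})_t$ is a non-negative supermartingale under $\pi_1^\otimes$. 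On the event $\{\tau<\infty\}$ the terminal value is exactly $e^{-\tau/T}$, since $\min\{Z_\tau,1\}=1$. The supermartingale property gives $\E[Y_{t\wedge\tau}]\leq Y_0=\alpha^{\eta_\star}$ for every $t$, and Fatou's lemma as $t\to\infty$ then yields
$$\E[e^{-\tau/T}\one_{\{\tau<\infty\}}]\leq\alpha^{\eta_\star}.$$
The indicator can be dropped because $R(\infty)=0$. This bound holds for every policy, not only Markov ones.

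\emph{Lower bound.} Play $a_\star$ at every round, and set
$$M_t=e^{-t/T}Z_t^{\eta_\star}=\alpha^{\eta_\star}\prod_{k\leq t}e^{-1/T}\phi(a_\star,X_k)^{\eta_\star}.$$
By the equality in \eqref{eq:cond}, $M_t$ is a positive martingale with mean $\alpha^{\eta_\star}$. The stopped process $M_{t\wedge\tau}$ is bounded: before $\tau$ we have $Z<1$, and at $\tau$ we have $Z_\tau\leq Z_{\tau-1}\Phi<\Phi$, so $M_{t\wedge\tau}\leq\Phi^{\eta_\star}$ (note $\Phi\geq1$ necessarily). Bounded convergence then gives
$$\alpha^{\eta_\star}=\E[M_{\tau}\one_{\{\tau<\infty\}}]+\E[\lim_t M_t\one_{\{\tau=\infty\}}].$$
The first term is at most $\Phi^{\eta_\star}\E[e^{-\tau/T}]$, which is exactly what we want. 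So it remains to show that the second term vanishes.

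\emph{Main obstacle.} The key step is showing $M_t\to0$ almost surely on $\{\tau=\infty\}$. On that event $Z_t<1$ for all $t$, so $M_t\leq e^{-t/T}\to0$, and the second term is indeed zero. This is where the truncation from never having rejected is used, and the argument closes without any drift condition on $\log\phi(a_\star,\cdot)$. Rearranging gives $\E[e^{-\tau^{a_\star}/T}]\geq(\alpha/\Phi)^{\eta_\star}$.

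The measurability and conditioning details are routine, since $a_\star$ is constant and $a^\infty$ is Borel. The assumption $\eta_\star<1$ does not seem to be needed beyond positivity, and I would simply note this rather than rely on it.
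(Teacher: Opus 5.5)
Your proof is correct and follows the paper's argument. For the upper bound, the discounted power process $e^{-t/T}(\alpha W_t)^{\eta_\star}$ stopped at rejection is a supermartingale under any policy by \eqref{eq:cond}. For the lower bound, under the constant action $a_\star$ the same process is a martingale whose stopped version is bounded by $\Phi^{\eta_\star}$, so optional stopping and bounded convergence apply, with the $\{\tau=\infty\}$ term vanishing because $M_t\le e^{-t/T}$ there.

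Your $\min\{Z_t,1\}$ truncation with Fatou is only a cosmetic variant of the paper's bound $e^{-\tau/T}\one_{\{\tau\le n\}}\le M_n$ combined with monotone convergence. Your remarks that $\eta_\star<1$ is never used and that the upper bound holds for all predictable policies are both accurate.
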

We recall that \eqref{eq:cond}, mentioned in the statement, reads
$$\sup_{a\in\A}\E_{\pi_1}\bigl[\phi(a,\,\cdot\,)^{\eta_\star}\bigr] = \E_{\pi_1}\bigl[\phi(a_\star,\,\cdot\,)^{\eta_\star}\bigr] = e^{1/T}\,.$$
\begin{proof}
Fix a Markov policy $a^\infty$, let $\widetilde W_t^{a^\infty}$ be the wealth process starting from wealth one, and write
$$Z_t^{a^\infty}=\alpha \widetilde W_t^{a^\infty}\,,$$
so that rejection occurs when $Z_t^{a^\infty}\geq1$. Define
$$M_t=e^{-(t\wedge\tau^{a^\infty})/T}\big(Z_{t\wedge\tau^{a^\infty}}^{a^\infty}\big)^{\eta^\star}\,.$$
We first show that $(M_t)_{t\geq0}$ is a non-negative supermartingale under $\pi_1^\otimes$. On the event $\{\tau^{a^\infty}\leq t\}$, we have $M_{t+1}=M_t$. On the event $\{\tau^{a^\infty}>t\}$, the next action is $\sigma(X^t)$-measurable. Hence, using independence under $\pi_1^\otimes$ and the defining property of $(a^\star,\eta^\star)$,
$$\E_{\pi_1^\otimes}\left[M_{t+1}\mid X^t\right]\leq e^{-(t+1)/T}\big(Z_t^{a^\infty}\big)^{\eta^\star}\sup_{a\in\A}\E_{\pi_1}\left[\phi(a,X_1)^{\eta^\star}\right]=e^{-t/T}\big(Z_t^{a^\infty}\big)^{\eta^\star}=M_t\,.$$
Thus $(M_t)_{t\geq0}$ is a non-negative supermartingale. Therefore, for every $n\geq1$, $\E_{\pi_1^\otimes}[M_n]\leq M_0=\alpha^{\eta^\star}$. On the event $\{\tau^{a^\infty}\leq n\}$, we have $M_n=e^{-\tau^{a^\infty}/T}(Z_{\tau^{a^\infty}}^{a^\infty})^{\eta^\star}$ and $Z_{\tau^{a^\infty}}^{a^\infty}\geq1$, so
$$e^{-\tau^{a^\infty}/T}\one_{\{\tau^{a^\infty}\leq n\}}\leq M_n\,.$$
It follows that $\E_{\pi_1^\otimes}[e^{-\tau^{a^\infty}/T}\one_{\{\tau^{a^\infty}\leq n\}}]\leq\alpha^{\eta^\star}$. Sending $n\to\infty$ and using monotone convergence gives
$$\E_{\pi_1^\otimes}\left[e^{-\tau^{a^\infty}/T}\right]\leq\alpha^{\eta^\star}\,.$$
Taking the supremum over Markov policies yields
$$\sup_{a^\infty\text{ Markov}}\E_{\pi_1^\otimes}\left[e^{-\tau^{a^\infty}/T}\right]\leq\alpha^{\eta^\star}\,.$$

We now consider the constant policy that always plays $a^\star$. Let $\widetilde W_t^{a^\star}$ be the corresponding wealth process starting from wealth one, and write $Z_t^{a^\star}=\alpha\widetilde W_t^{a^\star}$. Define $N_t=e^{-t/T}(Z_t^{a^\star})^{\eta^\star}$. Then $(N_t)_{t\geq0}$ is a non-negative martingale under $\pi_1^\otimes$, because
$$\E_{\pi_1^\otimes}\left[N_{t+1}\mid X^t\right]=e^{-(t+1)/T}\big(Z_t^{a^\star}\big)^{\eta^\star}\E_{\pi_1}\left[\phi(a^\star,X_1)^{\eta^\star}\right]=N_t\,.$$
On the event $\{\tau^{a^\star}<\infty\}$, we have $Z_{\tau^{a^\star}-1}^{a^\star}<1$ and $Z_{\tau^{a^\star}}^{a^\star}\geq1$. Since $\phi(a^\star,\cdot)\leq\Phi$, $\pi_1$-almost surely, this implies $1\leq Z_{\tau^{a^\star}}^{a^\star}\leq\Phi$, $\pi_1^\otimes$-almost surely on $\{\tau^{a^\star}<\infty\}$. On the event $\{\tau^{a^\star}>n\}$, instead, $Z_n^{a^\star}<1$. Hence $0\leq Z_{\tau^{a^\star}\wedge n}^{a^\star}\leq\Phi$, $\pi_1^\otimes$-almost surely.

By optional stopping at the bounded stopping time $\tau^{a^\star}\wedge n$,
$$\alpha^{\eta^\star}=\E_{\pi_1^\otimes}\left[e^{-(\tau^{a^\star}\wedge n)/T}\big(Z_{\tau^{a^\star}\wedge n}^{a^\star}\big)^{\eta^\star}\right]\,.$$
Sending $n\to\infty$ and using dominated convergence yields
$$\alpha^{\eta^\star}=\E_{\pi_1^\otimes}\left[e^{-\tau^{a^\star}/T}\big(Z_{\tau^{a^\star}}^{a^\star}\big)^{\eta^\star}\one_{\{\tau^{a^\star}<\infty\}}\right]\,.$$
Since $Z_{\tau^{a^\star}}^{a^\star}\leq\Phi$ on $\{\tau^{a^\star}<\infty\}$, we get
$$\alpha^{\eta^\star}\leq\Phi^{\eta^\star}\E_{\pi_1^\otimes}\left[e^{-\tau^{a^\star}/T}\right]\,.$$
Therefore,
$$\left(\frac{\alpha}{\Phi}\right)^{\eta^\star}\leq\E_{\pi_1^\otimes}\left[e^{-\tau^{a^\star}/T}\right]\,.$$
Since the constant policy is Markov, the middle inequality is immediate. Combining the three bounds gives
$$(\alpha/\Phi)^{\eta^\star}\leq\E_{\pi_1^\otimes}\left[e^{-\tau^{a^\star}/T}\right]\leq\sup_{a^\infty\text{ Markov}}\E_{\pi_1^\otimes}\left[e^{-\tau^{a^\infty}/T}\right]\leq\alpha^{\eta^\star}\,,$$
as claimed.
\end{proof}

\subsection[Proof of Proposition 4]{Proof of \Cref{prop:exp-reward-summary}}
Here we prove \Cref{prop:exp-reward-summary}, restated below.
\begin{proposition*}
Let $G = \{(1-\eta)\log\E_{\pi_0}[L_\eta]\,:\,\eta\in[0,1)\,,\,\E_{\pi_0}[L_\eta]<\infty\}$. If $1/T\in(0,\sup G)$, then there exist $\eta_\star\in(0,1)$ and $\pi_\star\ll\pi_0$ satisfying \eqref{eq:cond}. Moreover,
$\dd\pi_\star/\dd\pi_0=L_{\eta_\star}/\E_{\pi_0}[L_{\eta_\star}]$
and $\log(\sup_{\pi\ll\pi_0}\E_{\pi_1}[(\dd\pi/\dd\pi_0)^{\eta_\star}]) = \eta^\star D_{1/(1-\eta_\star)}(\pi_1\|\pi_0) = 1/T$.
\end{proposition*}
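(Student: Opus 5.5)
The proof has two parts. First, for fixed $\eta\in(0,1)$, solve the inner maximisation $\sup_{\pi\ll\pi_0}\E_{\pi_1}[(\dd\pi/\dd\pi_0)^{\eta}]$ in closed form. Second, choose $\eta_\star$ by a continuity argument on the resulting one-dimensional function of $\eta$.

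For the inner problem, write $\ell=\dd\pi_1/\dd\pi_0$ and $f=\dd\pi/\dd\pi_0$, so that $\E_{\pi_0}[f]=1$ and
$$\E_{\pi_1}[f^\eta]=\E_{\pi_0}[\ell f^\eta].$$
Apply Hölder's inequality with exponents $1/\eta$ and $1/(1-\eta)$:
$$\E_{\pi_0}[\ell f^\eta]\leq \E_{\pi_0}[f]^{\eta}\,\E_{\pi_0}[\ell^{1/(1-\eta)}]^{1-\eta}=\E_{\pi_0}[L_\eta]^{1-\eta}.$$
Equality in Hölder holds when $f\propto \ell^{1/(1-\eta)}=L_\eta$, which gives $f=L_\eta/\E_{\pi_0}[L_\eta]$ whenever $\E_{\pi_0}[L_\eta]<\infty$. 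Substituting this $f$ shows directly that the bound is attained, so
$$\log\sup_{\pi\ll\pi_0}\E_{\pi_1}\bigl[(\dd\pi/\dd\pi_0)^\eta\bigr]=g(\eta):=(1-\eta)\log\E_{\pi_0}[L_\eta].$$
With $\xi=1/(1-\eta)$ we have $\xi-1=\eta/(1-\eta)$, hence $g(\eta)=(1-\eta)(\xi-1)D_\xi(\pi_1\|\pi_0)=\eta D_\xi(\pi_1\|\pi_0)$. This identity yields the Rényi form of the statement at no extra cost.

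It remains to find $\eta_\star\in(0,1)$ with $g(\eta_\star)=1/T$.
\begin{itemize}
\item At $\eta=0$, $L_0=\ell$ and $g(0)=\log\E_{\pi_0}[\ell]=0$.
\item Let $I=\{\eta\in[0,1):\E_{\pi_0}[L_\eta]<\infty\}$. Since $\E_{\pi_0}[\ell^p]$ is log-convex in $p$ and finite at $p=1$, the finiteness set in $p=1/(1-\eta)$ is an interval containing $1$. Hence $I$ is an interval containing $0$, and $g$ is finite on $I$.
\item Because $1/T<\sup G$, there is some $\eta_1\in I$ with $g(\eta_1)>1/T>0$, and necessarily $\eta_1>0$.
\item If $g$ is continuous on $[0,\eta_1]$, the intermediate value theorem gives $\eta_\star\in(0,\eta_1)$ with $g(\eta_\star)=1/T$.
\end{itemize}
Set $\dd\pi_\star/\dd\pi_0=L_{\eta_\star}/\E_{\pi_0}[L_{\eta_\star}]$. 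By the first part, $(\pi_\star,\eta_\star)$ satisfies \eqref{eq:cond}.

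The main obstacle is the continuity of $p\mapsto\log\E_{\pi_0}[\ell^p]$ on $[1,p_1]$, where $p_1=1/(1-\eta_1)$. For this I use convexity: $\log\E_{\pi_0}[\ell^p]$ is convex and finite on $[1,p_1]$, so it is continuous on the open interval $(1,p_1)$. At the endpoints, I would use dominated convergence with dominating function $\ell^{1}+\ell^{p_1}$, which bounds $\ell^p$ for every $p\in[1,p_1]$ and is $\pi_0$-integrable. This gives continuity on the closed interval, and composing with the continuous maps $\eta\mapsto 1/(1-\eta)$ and the factor $(1-\eta)$ preserves it. If $\sup G$ is approached only along a sequence in $I$, it suffices to pick one $\eta_1$ from that sequence with $g(\eta_1)>1/T$, so this case needs no separate treatment.
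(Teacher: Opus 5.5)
Your proposal is correct and follows essentially the same route as the paper. You use Hölder with exponents $1/\eta$ and $1/(1-\eta)$ to evaluate the inner supremum, attained by the $L_\eta$-tilt, and then apply the intermediate value theorem to $\eta\mapsto(1-\eta)\log\E_{\pi_0}[L_\eta]$ on $[0,\bar\eta]$ starting from the value $0$ at $\eta=0$. The only differences are that you present the two steps in the opposite order and justify continuity explicitly by dominated convergence with $\ell+\ell^{p_1}$, where the paper appeals directly to continuity of $q\mapsto\|L\|_q$.
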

We recall that \eqref{eq:cond}, mentioned in the statement, reads
$$\sup_{a\in\A}\E_{\pi_1}\bigl[\phi(a,\,\cdot\,)^{\eta_\star}\bigr] = \E_{\pi_1}\bigl[\phi(a_\star,\,\cdot\,)^{\eta_\star}\bigr] = e^{1/T}\,.$$
\begin{proof}
Let $L=L_0 =\dd\pi_1/\dd\pi_0$. We first show that there exists $\eta_\star\in(0,1)$ such that
$$(1-\eta_\star)\log\E_{\pi_0}[L_{\eta_\star}]=1/T\,.$$
Indeed, since $1/T\in(0,\sup G)$, there exists $\bar\eta\in(0,1)$ such that $\E_{\pi_0}[L_{\bar\eta}]<\infty$ and
$$(1-\bar\eta)\log\E_{\pi_0}[L_{\bar\eta}]>1/T\,.$$
The map $\eta\mapsto(1-\eta)\log\E_{\pi_0}[L_\eta]$ is finite and continuous on $[0,\bar\eta]$. Finiteness follows by interpolation, since $L_\eta=L^{1/(1-\eta)}$ and $1/(1-\eta)\leq1/(1-\bar\eta)$ for $\eta\leq\bar\eta$. Continuity follows from the continuity of $q\mapsto\|L\|_q$ on finite intervals where the largest moment is finite. Since the same map is equal to zero at $\eta=0$, the intermediate value theorem gives $\eta_\star\in(0,\bar\eta)$ such that
$$(1-\eta_\star)\log\E_{\pi_0}[L_{\eta_\star}]=1/T\,.$$

We now identify the maximiser in \eqref{eq:cond}. Fix $\eta\in(0,1)$ such that $\E_{\pi_0}[L_\eta]<\infty$, and write $p=1/\eta$ and $q=1/(1-\eta)$. Then $1/p + 1/q = 1$. Let $\pi\ll\pi_0$ and set $f=\dd\pi/\dd\pi_0$. By Hölder's inequality,
$$\E_{\pi_1}[f^\eta]=\E_{\pi_0}[Lf^\eta]\leq \E_{\pi_0}[L^q]^{1/q}\E_{\pi_0}[f]^\eta=\E_{\pi_0}[L_\eta]^{1-\eta}\,.$$
Thus
$$\sup_{\pi\ll\pi_0}\E_{\pi_1}\left[\left(\dd\pi/\dd\pi_0\right)^\eta\right]\leq \E_{\pi_0}[L_\eta]^{1-\eta}\,.$$
The upper bound is attained by the probability measure $\pi_\eta\ll\pi_0$ defined by
$$\frac{\dd\pi_\eta}{\dd\pi_0}=\frac{L_\eta}{\E_{\pi_0}[L_\eta]}\,.$$
Indeed, for this choice,
$$\E_{\pi_1}\left[\left(\frac{\dd\pi_\eta}{\dd\pi_0}\right)^\eta\right]=\E_{\pi_0}\left[L\left(\frac{L^q}{\E_{\pi_0}[L^q]}\right)^\eta\right]=\E_{\pi_0}[L^q]^{1-\eta}=\E_{\pi_0}[L_\eta]^{1-\eta}\,,$$
where we used $1+q\eta=q$. Applying this with $\eta=\eta_\star$ and setting $\pi_\star=\pi_{\eta_\star}$ gives
$$\frac{\dd\pi_\star}{\dd\pi_0}=\frac{L_{\eta_\star}}{\E_{\pi_0}[L_{\eta_\star}]}\,.$$
Moreover,
$$\log\left(\sup_{\pi\ll\pi_0}\E_{\pi_1}\left[\left(\dd\pi/\dd\pi_0\right)^{\eta_\star}\right]\right)=(1-\eta_\star)\log\E_{\pi_0}[L_{\eta_\star}]=1/T\,.$$
Finally, since
$$\eta_\star D_{1/(1-\eta_\star)}(\pi_1\|\pi_0)=(1-\eta_\star)\log\E_{\pi_0}[L_{\eta_\star}]\,,$$
we obtain
$$\log\left(\sup_{\pi\ll\pi_0}\E_{\pi_1}\left[\left(\dd\pi/\dd\pi_0\right)^{\eta_\star}\right]\right)=\eta_\star D_{1/(1-\eta_\star)}(\pi_1\|\pi_0)=1/T\,.$$
Thus $\pi_\star$ attains the supremum in \eqref{eq:cond}, and \eqref{eq:cond} holds.
\end{proof}

\subsection[Proof of Proposition 5]{Proof of \Cref{prop:power-constant-strategy}}
We prove \Cref{prop:power-constant-strategy}, which we now recall.
\begin{proposition*}
Fix $a\in\A$, assume that $\phi(a,\,\cdot\,)>0$, $\pi_1$-almost surely, and that $\gamma(a)=\E_{\pi_1}[\log\phi(a,\,\cdot\,)]\in[-\infty,\infty]$ is well-defined. Then, $\pi_1^\otimes(\tau_w^a<\infty)=1$ for every $w\in(0,\alpha^{-1})$ if and only if either $\gamma(a)>0$, or $\gamma(a)=0$ and $\pi_1(\log\phi(a,\,\cdot\,)>0)>0$. Moreover, when $\gamma(a)<0$, if $\phi(a,\,\cdot\,)\leq\Phi$, $\pi_1$-almost surely, and there exists $\kappa>0$ such that $\E_{\pi_1}[\phi(a,\,\cdot\,)^\kappa]=1$, then $(\alpha w/\Phi)^\kappa\leq\pi_1^\otimes(\tau_w^a<\infty)\leq(\alpha w)^\kappa$ for every $w\in(0,\alpha^{-1})$.
\end{proposition*}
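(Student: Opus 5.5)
Under the constant policy the log-wealth is a random walk: writing $\xi_k=\log\phi(a,X_k)$, which is well defined $\pi_1$-a.s. by the positivity assumption, we have $\log\widetilde W_t^{w,a}=\log w+S_t$ with $S_t=\sum_{k\le t}\xi_k$ i.i.d.\ under $\pi_1^\otimes$. Moreover $\tau_w^a<\infty$ if and only if $\sup_t S_t\geq\log(1/(\alpha w))$, where the level $\log(1/(\alpha w))$ is strictly positive and ranges over all of $(0,\infty)$ as $w$ ranges over $(0,\alpha^{-1})$. The first claim therefore becomes the statement that $\sup_t S_t=+\infty$ almost surely, and I will prove it using the classical trichotomy for random walks with well-defined mean (Chung--Fuchs / Kesten).

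\textbf{Sufficiency.} If $\gamma(a)>0$, the strong law of large numbers gives $S_t/t\to\gamma(a)$ a.s.; this also covers $\gamma(a)=+\infty$ by truncating $\xi$ from above. Hence $S_t\to\infty$. If $\gamma(a)=0$ and $\pi_1(\xi>0)>0$, then $\xi$ is not a.s.\ zero. A mean-zero non-degenerate random walk oscillates, with $\limsup S_t=+\infty$ a.s., and this again gives $\tau_w^a<\infty$ a.s.

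\textbf{Necessity.} If $\gamma(a)<0$, including $-\infty$, then $S_t\to-\infty$ a.s., so $M=\sup_{t\ge1}S_t<\infty$ a.s. Choosing $w$ small enough makes $\pi_1^\otimes(M\geq\log(1/(\alpha w)))<1$. If instead $\gamma(a)=0$ and $\pi_1(\xi>0)=0$, then $\xi=0$ a.s., so $S_t\equiv0$ and no level $>0$ is ever reached. So the equivalence holds "for every $w$", with failure witnessed by small $w$, or by every $w$ in the degenerate case.

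\textbf{Quantitative bound.} For the case $\gamma(a)<0$, I mimic the martingale argument of \Cref{prop:approximation-constant-strategy}. Set $N_t=(\alpha\widetilde W_t^{w,a})^\kappa$. This is a nonnegative $\pi_1^\otimes$-martingale with $N_0=(\alpha w)^\kappa$, since $\E_{\pi_1}[\phi(a,\cdot)^\kappa]=1$.

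For the upper bound, optional stopping at $\tau_w^a\wedge n$ together with $N_{\tau}\geq1$ on $\{\tau_w^a\le n\}$ gives $\pi_1^\otimes(\tau_w^a\le n)\le(\alpha w)^\kappa$; letting $n\to\infty$ finishes it.

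For the lower bound, the stopped process is bounded by $\Phi^\kappa$, because before $\tau$ we have $\alpha\widetilde W<1$ and at $\tau$ we have $\alpha\widetilde W\le\Phi$. So dominated convergence applies and $(\alpha w)^\kappa=\E[N_{\tau}\one_{\{\tau<\infty\}}]+\lim_n\E[N_n\one_{\{\tau>n\}}]$.

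\textbf{Main obstacle.} The delicate step is showing that the second term vanishes. On $\{\tau=\infty\}$ the walk drifts to $-\infty$ since $\gamma(a)<0$, so $N_n\to0$ a.s. there, and the terms are bounded by $1$; dominated convergence then kills the term. This yields $(\alpha w)^\kappa\le\Phi^\kappa\pi_1^\otimes(\tau_w^a<\infty)$. The only other nontrivial ingredient is the oscillation of the mean-zero walk, which I will cite as standard rather than reprove.
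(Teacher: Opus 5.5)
Your proof is correct and follows essentially the same route as the paper: you reduce to the random walk $S_t$, use the strong law and the oscillation theorem for the characterisation, and apply optional stopping to the martingale $(\alpha\widetilde W_t^{w,a})^\kappa$ for the bounds. There are two minor differences. For $\gamma(a)<0$ you exhibit failure only at small $w$; this suffices for the stated equivalence, whereas the paper shows $\pi_1^\otimes(\tau_w^a<\infty)<1$ at every $w$, in particular at $w=1$, which is what the ``power one'' interpretation uses. For the lower bound you remove $\E[N_n\one_{\{\tau>n\}}]$ by dominated convergence using $N_n\to0$, instead of the paper's lower barrier $\sigma_b$ with $b\downarrow0$.
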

\begin{proof}
Write $Y_i=\log\phi(a,X_i)$ and $S_t=\sum_{i=1}^tY_i$. Under $\pi_1^\otimes$, the variables $(Y_i)_{i\geq1}$ are i.i.d. with well-defined mean $\gamma(a)$. For initial wealth $w\in(0,\alpha^{-1})$, the constant policy that always plays $a$ has wealth
$$\widetilde W_t^{w,a}=w\exp(S_t)\,,$$
and therefore
$$\tau_w^a=\inf\{t\geq1:S_t\geq \log(\alpha^{-1}/w)\}\,.$$
Set $c_w=\log(\alpha^{-1}/w)>0$.

Suppose first that $\gamma(a)>0$. By the strong law of large numbers, $S_t\to+\infty$, $\pi_1^\otimes$-almost surely. Hence $S_t$ hits the level $c_w$ in finite time almost surely, and so $\pi_1^\otimes(\tau_w^a<\infty)=1$. If instead $\gamma(a)=0$ and $\pi_1(Y_1>0)>0$, then $(S_t)_{t\geq0}$ is a centred non-degenerate random walk. Since its increments have mean zero, it cannot drift to $+\infty$ or $-\infty$. By Feller's oscillation theorem for one-dimensional random walks \citep[Chapter~XII, Section~2, Theorem~1]{feller1971introduction}, $\limsup_t S_t=+\infty$ almost surely. Hence $S_t$ hits the level $c_w$ in finite time almost surely, and again $\pi_1^\otimes(\tau_w^a<\infty)=1$.

We now prove the converse. If $\gamma(a)=0$ and $\pi_1(Y_1>0)=0$, then $Y_1=0$, $\pi_1$-almost surely, so $S_t=0$ for every $t\geq0$ and the positive level $c_w$ is never reached. If $\gamma(a)<0$, then the strong law of large numbers gives $S_t\to-\infty$, $\pi_1^\otimes$-almost surely. In particular, $\sup_t S_t<\infty$ almost surely. We claim that $\pi_1^\otimes(\sup_t S_t<c_w)>0$. Indeed, choose $\varepsilon>0$ such that $\pi_1(Y_1\leq-\varepsilon)>0$. Since an independent copy of the random walk has finite supremum almost surely, we may choose $m$ large enough that this supremum is smaller than $c_w+m\varepsilon$ with positive probability. On the event that the first $m$ increments are all at most $-\varepsilon$ and the future supremum is smaller than $c_w+m\varepsilon$, the level $c_w$ is never hit. Thus $\pi_1^\otimes(\tau_w^a<\infty)<1$. This proves the first part of the proposition.

We now assume that $\gamma(a)<0$, that $\phi(a,\,\cdot\,)\leq\Phi$, $\pi_1$-almost surely, and that there exists $\kappa>0$ such that $\E_{\pi_1}[\phi(a,\,\cdot\,)^\kappa]=1$. Then $(\widetilde W_t^{w,a})^\kappa$ is a non-negative $\pi_1^\otimes$-martingale with initial value $w^\kappa$. Optional stopping at the bounded stopping time $\tau_w^a\wedge n$ gives
$$w^\kappa=\E_{\pi_1^\otimes}\left[(\widetilde W_{\tau_w^a\wedge n}^{w,a})^\kappa\right]\geq \alpha^{-\kappa}\pi_1^\otimes(\tau_w^a\leq n)\,.$$
Sending $n\to\infty$ yields
$$\pi_1^\otimes(\tau_w^a<\infty)\leq(\alpha w)^\kappa\,.$$

It remains to prove the lower bound. If $\Phi=\infty$, the bound is trivial, so assume $\Phi<\infty$. Fix $b\in(0,w)$ and define $\sigma_b=\inf\{t\geq0:\widetilde W_t^{w,a}\leq b\}$. Since $\gamma(a)<0$, we have $\widetilde W_t^{w,a}\to0$, $\pi_1^\otimes$-almost surely. Hence $\sigma_b<\infty$ almost surely for every $b\in(0,w)$, and $\sigma_b\to\infty$ almost surely as $b\downarrow0$. Optional stopping at $\tau_w^a\wedge\sigma_b\wedge n$ gives
$$w^\kappa=\E_{\pi_1^\otimes}\left[(\widetilde W_{\tau_w^a\wedge\sigma_b\wedge n}^{w,a})^\kappa\right]\,.$$
Letting $n\to\infty$ and splitting according to whether $\tau_w^a<\sigma_b$ or $\sigma_b<\tau_w^a$, we obtain
$$w^\kappa\leq \left(\frac{\Phi}{\alpha}\right)^\kappa\pi_1^\otimes(\tau_w^a<\sigma_b)+b^\kappa\,.$$
Here we used that, on $\{\tau_w^a<\infty\}$, the pre-rejection wealth is below $\alpha^{-1}$, and therefore the rejection wealth is at most $\Phi\alpha^{-1}$. Sending $b\downarrow0$ gives
$$w^\kappa\leq \left(\frac{\Phi}{\alpha}\right)^\kappa\pi_1^\otimes(\tau_w^a<\infty)\,,$$
and hence
$$(\alpha w/\Phi)^\kappa\leq\pi_1^\otimes(\tau_w^a<\infty)\,.$$
Combining the two bounds proves the claim.
\end{proof}

\newpage
\section{Details for the examples}\label{sec:app-examples}
\subsection{Gaussian example}\label{sec:app-gauss}
Let $\pi_0 = \N(0, \sigma^2)$ and $\pi_1 = \N(\mu, \sigma^2)$, with $\mu>0$ and $\sigma>0$. We consider sequential tests for $\pi_0^\otimes$ against $\pi_1^\otimes$. We let $\A$ and $\phi$ be as in \Cref{sec:simple}.

\paragraph{Hard deadline.} Fix an integer $T\geq 1$ and consider the reward $R=\one_{[0,T]}$. By the Neyman--Pearson lemma, the optimal fixed-horizon rejection event is obtained by spending the available null mass on the largest values of the likelihood ratio. Thus the event $\Gamma^\star$ from \Cref{prop:hard-deadline-np} can be chosen in the form $\Gamma^\star = \{\dd\pi_1^\otimes/\dd\pi_0^\otimes\geq C\}$, where $C$ is chosen so that $\pi_0^\otimes(\dd\pi_1^{\otimes T}/\dd\pi_0^{\otimes T}\geq C)=\alpha$.

We let $S_t(x^t)=\sum_{k=1}^t x_k$ and notice that
$$\frac{\dd\pi_1^{\otimes T}}{\dd\pi_0^{\otimes T}}(x^T)=\exp\left(\frac{\mu}{\sigma^2}S_T(x^T)-\frac{T\mu^2}{2\sigma^2}\right)\,.$$
Since $\mu>0$, this expression is strictly increasing in $S_T(x^T)$. In particular, we can write $\Gamma^\star=\{S_T\geq c\}$, where $c$ satisfies $\pi_0^\otimes(S_T\geq c)=\alpha$. Since $S_T\sim\N(0,T\sigma^2)$ under $\pi_0^\otimes$, we have
$$c=\sigma\sqrt T\,F^{-1}(1-\alpha)\,,$$
where $F$ is the cumulative distribution function of the standard normal.

The optimal e-process from \Cref{prop:hard-deadline-np} can be written as
$$U_t^\star(x^t)=\frac{\pi_0^\otimes(S_T\geq c\mid S_t)}{\alpha}\,.$$
Since under $\pi_0^\otimes$ we have $\Delta_{T,t}=S_T-S_t\sim\N(0,\sigma^2(T-t))$, independently of $S_t$, we obtain

$$U_t^\star(x^t)=\frac{h_t(S_t(x^t))}{\alpha}\,,$$
where, for $t<T$,
$$h_t(s)=1-F\left(\frac{c-s}{\sigma\sqrt{T-t}}\right)\,,$$
with terminal convention $h_T(s)=\one_{[c,\infty)}(s)$.

Notice that, for every $t<T$ and every $s\in\R$, the Gaussian increment $S_T-S_t$ has full support under $\pi_0^\otimes$. Hence
$$0<h_t(s)<1\,.$$
Consequently, for every $t<T$,
$$U_t^\star(x^t)=\frac{h_t(S_t(x^t))}{\alpha}<\alpha^{-1}\,.$$
Thus the exact hard-deadline Doob test does not reject before the deadline. It rejects at time $T$ precisely on the event $\Gamma^\star=\{S_T\geq c\}$, since
$$U_T^\star(x^T)=\alpha^{-1}\one_{\{S_T\geq c\}}\,.$$

Following the discussion after \Cref{prop:hard-deadline-np}, we now describe the canonical predictable \emph{truncated} strategy $A^T$ whose wealth process is $U^{\star,T}$ (for $t\geq T$, any continuation of the policy and wealth process are equivalent for the hard deadline objective). For $t=1,\ldots,T$ and $x^{t-1}\in\R^{t-1}$, the action $A_t(x^{t-1})$ is the measure
$$A_t(x^{t-1})(\dd x)=\frac{h_t(S_{t-1}(x^{t-1})+x)}{h_{t-1}(S_{t-1}(x^{t-1}))}\,\dd\pi_0(x)\,.$$

Remarkably, although this strategy is written as a function of the whole past, it corresponds to a Markovian strategy, since it can be made to depend on the current wealth only. Indeed, the wealth at round $t-1$ is precisely
$$U_{t-1}^\star(x^{t-1})=\frac{h_{t-1}(S_{t-1}(x^{t-1}))}{\alpha}\,.$$
In particular, we can write $W_{t-1}=w_{t-1}(S_{t-1}(x^{t-1}))$, where $w_{t-1}(s)=h_{t-1}(s)/\alpha$. Since $h_{t-1}$ is strictly increasing, we can invert this relation. Let $s_{t-1}=w_{t-1}^{-1}$. Then
$$S_{t-1}(x^{t-1})=s_{t-1}(W_{t-1})\,.$$
Since $A_t$ depends on $x^{t-1}$ only through $S_{t-1}$, the strategy depends only on current wealth. More precisely, the equivalent Markovian policy is $a^T$, where for $t=1,\ldots,T$ and wealth $w$ the action $a_t(w)$ is the measure
$$a_t(w)(\dd x)=\frac{h_t(s_{t-1}(w)+x)}     {h_{t-1}(s_{t-1}(w))}\,\dd\pi_0(x)=\frac{1}{\alpha w}h_t(s_{t-1}(w)+x)\,\dd\pi_0(x)\,,$$
where we used $h_{t-1}(s_{t-1}(w))=\alpha w$ by construction.

Finally, an important remark is that the value of $\mu$ does not appear in the construction, except through its sign, which here we have assumed to be positive. The magnitude of $\mu$ affects the power but not the rejection region or the associated Doob strategy. This contrasts with the GRO criterion, which would play the constant action $\pi_1$.

\paragraph{Exponentially decaying reward.} Fix $T>0$ and let $R(t)=e^{-t/T}$. We now compute the EDO action from \Cref{prop:exp-reward-summary}. Plugging the closed form
$$D_\xi(\pi_1\|\pi_0)=\frac{\xi\mu^2}{2\sigma^2}$$
of the Rényi divergence into the characteristic relation defining $\eta_\star$, we obtain
$$\frac{1}{1-\eta_\star}=1+\frac{2\sigma^2}{\mu^2T}\,.$$
Hence
$$L_{\eta_\star}(x)=\left(\frac{\dd\pi_1}{\dd\pi_0}(x)\right)^{1/(1-\eta_\star)}=\exp\left(\left(\frac{\mu}{\sigma^2}+\frac{2}{\mu T}\right)x-\left(\frac{\mu^2}{2\sigma^2}+\frac{1}{T}\right)\right)\,.$$
By \Cref{prop:exp-reward-summary}, the optimal constant action $\pi_\star$ has Lebesgue density proportional to $(\dd\pi_0/\dd x)L_{\eta_\star}$. Therefore
$$\frac{\dd\pi_\star}{\dd x}(x)\propto \exp\left(-\frac{x^2}{2\sigma^2}+\left(\frac{\mu}{\sigma^2}+\frac{2}{\mu T}\right)x\right)\propto \exp\left(-\frac{1}{2\sigma^2}\left(x-\mu-\frac{2\sigma^2}{\mu T}\right)^2\right)\,.$$
Thus
$$\pi_\star=\N\left(\mu+\frac{2\sigma^2}{\mu T},\,\sigma^2\right)\,.$$
Compared with GRO, which plays $\pi_1=\N(\mu,\sigma^2)$, EDO is more aggressive, shifting further away from the null. As $T\to\infty$, this additional shift vanishes and $\pi_\star$ recovers the GRO action $\pi_1$. Finally, we remark that by \Cref{prop:power-constant-strategy}, this increased aggressiveness preserves power one if and only if
$$T\geq\frac{1}{\KL{\pi_1}{\pi_0}}=\frac{2\sigma^2}{\mu^2}\,.$$

\subsection{Bernoulli example}\label{sec:app-bern}
Let $\pi_0 = \Bernoulli(p_0)$ and $\pi_1=\Bernoulli(p_1)$, with $0<p_0<p_1<1$. We consider sequential tests for $\pi_0^\otimes$ against $\pi_1^\otimes$. We let $\A$ and $\phi$ be as in \Cref{sec:simple}.
\paragraph{Hard deadline.} As in the Gaussian example we define $S_t(x^t) = \sum_{k=1}^t x_k$. Again, the likelihood ratio $\dd\pi_1^{\otimes T}/\dd\pi_0^{\otimes T}$ depends on $x^T$ only through $S_T$, since we have
$$\frac{\dd\pi_1^{\otimes T}}{\dd\pi_0^{\otimes T}}(x^T) = \left(\frac{p_1}{p_0}\right)^{S_T(x^T)}\left(\frac{1-p_1}{1-p_0}\right)^{T-S_T(x^T)}\,.$$
Since $p_1>p_0$, this expression is increasing in $S_T(x^T)$. However, unlike in the Gaussian case, $S_T$ is discrete. Hence it might be that no level $\{S_T=c\}$ has exactly probability $\alpha$ under $\pi_0^\otimes$. For this reason, in general the Neyman--Pearson event $\Gamma^\star$ (see \Cref{prop:hard-deadline-np}) is not $S_T$-measurable.

To get a concrete example of the issue, take $T=3$, $p_0=1/2$, $p_1=3/4$, and $\alpha=1/4$. Under $\pi_0^{\otimes 3}$, every triple of observations has mass $1/8$, while the likelihood ratio is increasing in $S_3$. The best $S_3$-measurable upper-tail event is $\{S_3=3\}=\{111\}$\footnote{Here, $111$ is the triple of observations $x_1=1$, $x_2=1$, $x_3=1$.}, since $\pi_0^\otimes(S_3\geq 2)=4/8>\alpha$, and its power is
$$\pi_1^\otimes(S_3=3)=\left(\frac34\right)^3=\frac{27}{64}\,.$$
However, the unrestricted Neyman--Pearson event can include one tied boundary sequence, for example
$$\Gamma^\star=\{111,110\}\,.$$
Then $\pi_0^\otimes(\Gamma^\star)=2/8=\alpha$, while
$$\pi_1^\otimes(\Gamma^\star)=\frac{27}{64}+\frac{9}{64}=\frac{36}{64}\,.$$
Thus the exact deterministic Neyman--Pearson event need not be $S_T$-measurable: restricting to $S_T$-measurable upper tails can lose power in the discrete case.

In order to identify $\Gamma^\star$, it is enough to decide how many sequences to select from each level set of $S_T$. For each $k=0,\ldots,T$, let $m_k$ denote the number of sequences selected from $\{S_T=k\}$. Since both $\pi_0^\otimes$ and $\pi_1^\otimes$ assign the same mass to all sequences in a fixed level, the Neyman--Pearson problem reduces to the finite integer programme
$$\max_{m_0,\ldots,m_T}\sum_{k=0}^T m_k p_1^k(1-p_1)^{T-k}$$
subject to
$$\sum_{k=0}^T m_k p_0^k(1-p_0)^{T-k}\leq \alpha\,,\qquad 0\leq m_k\leq {T\choose k}\,,\quad m_k\in\Nn\,.$$
After fixing a deterministic tie-break within each level, for instance lexicographic order with $1>0$, any maximiser $(m_0^\star,\dots,m_T^\star)$ defines an exact deterministic Neyman--Pearson event $\Gamma^\star$ by selecting the first $m_k^\star$ sequences from each level $\{S_T=k\}$.

When $p_0\leq 1/2$, the integer programme has a simpler greedy structure. Indeed, the null mass of a single sequence with $k$ ones is $p_0^k(1-p_0)^{T-k}$, which is non-increasing in $k$. Hence, after all levels with larger likelihood ratio have been included, if the remaining level budget is too small to add another sequence from the current boundary level, then it is also too small to add any sequence from a lower level. Thus one may include all sequences above a boundary level and then fill the remaining budget with as many sequences as possible from that boundary level. More precisely, choose $k_\alpha$ such that
$$\pi_0^\otimes(S_T>k_\alpha)\leq \alpha<\pi_0^\otimes(S_T\geq k_\alpha)\,,$$
and set
$$r_\alpha=\left\lfloor\frac{\alpha-\pi_0^\otimes(S_T>k_\alpha)}{p_0^{k_\alpha}(1-p_0)^{T-k_\alpha}}\right\rfloor\,.$$
After fixing the same deterministic tie-break within $\{S_T=k_\alpha\}$, an exact Neyman--Pearson event is
$$\Gamma^\star=\{S_T>k_\alpha\}\cup \mathcal C_{k_\alpha,r_\alpha}\,,$$
where $\mathcal C_{k_\alpha,r_\alpha}$ denotes the first $r_\alpha$ sequences in the level $\{S_T=k_\alpha\}$.

Once an optimal $\Gamma^\star$ is determined, the optimal e-process is again the Doob martingale from \Cref{prop:hard-deadline-np}. More explicitly, set $h_0=\pi_0^\otimes(\Gamma^\star)$ and, for $t=0,\ldots,T$,
$$h_t(x^t)=\pi_0^\otimes(\Gamma^\star\mid x^t)=\sum_{y\in\{0,1\}^{T-t}}\one_{\Gamma^\star}(x^t,y)\,p_0^{S_{T-t}(y)}(1-p_0)^{T-t-S_{T-t}(y)}\,.$$
Then
$$U_t^\star(x^t)=\frac{h_t(x^t)}{h_0}\,.$$
Following the discussion after \Cref{prop:hard-deadline-np}, this e-process is implemented by the predictable strategy which, whenever $h_{t-1}(x^{t-1})>0$, plays the action
$$A_t(x^{t-1})(\{x\})=\frac{h_t(x^{t-1},x)}{h_{t-1}(x^{t-1})}\,\pi_0(\{x\})\,,\qquad x\in\{0,1\}\,.$$
When $h_{t-1}(x^{t-1})=0$, the action can be chosen arbitrarily. The recursion $h_{t-1}(x^{t-1})=\sum_{x\in\{0,1\}}h_t(x^{t-1},x)\pi_0(\{x\})$ shows that $A_t(x^{t-1})$ is a probability measure, and the corresponding wealth telescopes to $U_t^\star$. Contrarily to the Gaussian case, in general this strategy is not Markovian in the wealth.

For large $T$, however, explicitly identifying $\Gamma^\star$ and computing the conditional quantities $h_t(x^t)$ quickly becomes computationally costly, since the exact construction may require keeping track of a tie-broken subset of a boundary level. In such cases, a practical simplification is to use the $S_T$-measurable upper-tail event obtained by ignoring the partial boundary correction. Namely, let
$$\widetilde k=\min\{k:\pi_0^\otimes(S_T\geq k)\leq\alpha\}\,,$$
and set $\widetilde\Gamma=\{S_T\geq \widetilde k\}$. This event is optimal among $S_T$-measurable upper-tail events, and it coincides with the exact Neyman--Pearson event whenever the level constraint aligns with a binomial upper tail. Otherwise, the discrepancy is confined to the boundary level. Heuristically, this correction becomes less important for large $T$: when $p_0$ is bounded away from $0$ and $1$, the mass of any single boundary atom is exponentially small in $T$, so ignoring a partial boundary selection has only a small effect on the rejection probability.

For this simplified event, the Doob quantities depend only on the current number of ones. More precisely, for $s\in\{0,\ldots,t\}$,
$$\widetilde h_t(s)=\pi_0^\otimes(S_T\geq \widetilde k\mid S_t=s)=\sum_{j=(\widetilde k-s)_+}^{T-t}{T-t\choose j}p_0^j(1-p_0)^{T-t-j}\,.$$
The corresponding predictable action is therefore
$$\widetilde A_t(x^{t-1})(\{x\})=\frac{\widetilde h_t(S_{t-1}(x^{t-1})+x)}{\widetilde h_{t-1}(S_{t-1}(x^{t-1}))}\,\pi_0(\{x\})\,,\qquad x\in\{0,1\}\,.$$
This gives a simple $S_T$-based implementation of the hard-deadline Doob strategy, which is the approximation we use when the exact boundary selection is computationally impractical.

\paragraph{Exponentially decaying reward.} Fix $T>0$ and let $R(t)=e^{-t/T}$. We now compute the EDO action from \Cref{prop:exp-reward-summary}. For Bernoulli distributions, the Rényi divergence has the closed form
$$D_\xi(\pi_1\|\pi_0)=\frac{1}{\xi-1}\log\left(p_1^\xi p_0^{1-\xi}+(1-p_1)^\xi(1-p_0)^{1-\xi}\right)\,.$$
Plugging this into the characteristic relation defining $\eta_\star$, namely $\eta_\star D_{1/(1-\eta_\star)}(\pi_1\|\pi_0)=1/T$, gives the scalar equation
$$ (1-\eta_\star)\log\left(p_1^{1/(1-\eta_\star)}p_0^{-\eta_\star/(1-\eta_\star)}+(1-p_1)^{1/(1-\eta_\star)}(1-p_0)^{-\eta_\star/(1-\eta_\star)}\right)=\frac{1}{T}\,.$$
In contrast to the Gaussian case, this equation does not simplify to a closed-form expression for $\eta_\star$, but it is only one-dimensional and can easily be solved numerically.

We now set
$$L_{\eta_\star}(x)=\left(\frac{\dd\pi_1}{\dd\pi_0}(x)\right)^{1/(1-\eta_\star)}\,.$$
Thus
$$L_{\eta_\star}(1)=\left(\frac{p_1}{p_0}\right)^{1/(1-\eta_\star)}\,,\qquad L_{\eta_\star}(0)=\left(\frac{1-p_1}{1-p_0}\right)^{1/(1-\eta_\star)}\,.$$
By \Cref{prop:exp-reward-summary}, the EDO action $\pi_\star$ is the $L_{\eta_\star}$-tilt of $\pi_0$. Hence $\pi_\star=\Bernoulli(p_\star)$, where
\begin{equation}\label{eq:pstar}p_\star=\frac{p_0\left(\frac{p_1}{p_0}\right)^{1/(1-\eta_\star)}}{p_0\left(\frac{p_1}{p_0}\right)^{1/(1-\eta_\star)}+(1-p_0)\left(\frac{1-p_1}{1-p_0}\right)^{1/(1-\eta_\star)}}\,.\end{equation}
Equivalently,
$$\log\frac{p_\star}{1-p_\star}=\frac{1}{1-\eta_\star}\log\frac{p_1}{1-p_1}-\frac{\eta_\star}{1-\eta_\star}\log\frac{p_0}{1-p_0}\,.$$
Since $p_1>p_0$ and $\eta_\star>0$, this tilt moves past the alternative, so $p_\star>p_1$. Compared with  GRO, which plays $\pi_1=\Bernoulli(p_1)$, EDO is therefore more aggressive. As $T\to\infty$, we have $\eta_\star\to0$, and this additional tilt vanishes, so $\pi_\star$ recovers the GRO action $\pi_1$.

Finally, by \Cref{prop:power-constant-strategy}, this increased aggressiveness preserves power one if and only if
$$T\geq\frac{1}{\KL{\pi_1}{\pi_0}}=\left(p_1\log\frac{p_1}{p_0}+(1-p_1)\log\frac{1-p_1}{1-p_0}\right)^{-1}\,.$$

\newpage
\section{Experimental details}
\label{sec:appendix-experiments}

\subsection[Details for Figure 1]{Details for \Cref{fig:heatmaps-Gaussian}}\label{sec:app-fig-heatmaps-gaussian}
\begin{center}
\includegraphics[width=\textwidth]{Plots/optimal_bet_heatmap_gaussian_lambda.png}
\end{center}

\Cref{fig:heatmaps-Gaussian} is obtained by solving the finite-horizon Bellman recursion numerically for a restricted Gaussian-shift action class. We take $\pi_0=\N(0,1)$, $\pi_1=\N(0.6,1)$ and $\alpha=0.05$, and use actions $a\in[0,4]$, corresponding to one-step e-variables $E_{[a]}(x)=\exp(ax-a^2/2)$. Thus the figure shows the Bellman optimum within this parametric class, not the unrestricted optimum over $\PR^{\pi_0}$. In particular, for the hard-deadline reward, the exact unrestricted Gaussian Doob strategy discussed in \Cref{sec:examples} would not reject before the deadline.

The Bellman recursion is solved over the rescaled wealth $z=\alpha w\in[0,1]$. We use a logarithmic grid for $z$ on $[10^{-8},1]$ with $401$ points, and a grid of $401$ actions. Expectations under $\pi_1$ are evaluated by Gauss--Hermite quadrature with $41$ nodes. To start the recursion, we have to truncate the rewards at a large value; in \cref{fig:heatmaps-Gaussian} the truncation is at $150$. This barely changes the objectives since, for all rewards, the values above $t=150$ are below $10^{-6}$.
The displayed panels show the early part of the policy, where the dependence on time and wealth is most visible.

The reward panels correspond to the hard-deadline reward $R(t)=\one_{t\leq 30}$, the logistic reward $R(t)=(1+\exp((t-30)/2))^{-1}$, and the exponential reward $R(t)=e^{-t/10}$. The heatmaps show the Bellman-optimal Gaussian-shift action $a$ as a function of time $t$ and log-wealth $\log w$. The ``hopeless region'' marks states where the reward has value zero, and hence any valid rejection time has passed so that further betting is hopeless.

The three panels illustrate how the reward shape changes the optimal betting behaviour. Under the hard-deadline reward, the policy becomes increasingly aggressive as the deadline approaches, reflecting the fact that rejection after time $30$ has no value. The logistic reward smooths this transition, leading to a more gradual change in actions around the same time scale. The exponential reward has no sharp deadline, and therefore produces a more stationary-looking policy away from the rejection boundary. The lower row shows the corresponding stopping-time CDFs under the alternative, estimated from $5000$ paths using the same quadrature dynamics as in the Bellman computation.

\newpage
\subsection[Details for Figure 2]{Details for \Cref{fig:bernoulli_doob}}\label{sec:appendix-fig-bernoulli_doob}

\begin{center}
\includegraphics[width=\textwidth]{Plots/bernoulli_hard_deadline_vs_gro_stacked_small.png}
\end{center}

\Cref{fig:bernoulli_doob} compares the hard-deadline Doob e-processes from \Cref{prop:hard-deadline-np} with GRO in the Bernoulli case. We take $\pi_0=\Bernoulli(0.4)$, $\pi_1=\Bernoulli(0.6)$ and $\alpha=0.05$, and consider deadlines $T=10$ and $T=20$. For each deadline, the terminal Neyman--Pearson event is constructed exactly. Writing $S_T=\sum_{t=1}^T x_t$, we choose the smallest $k$ such that $\pi_0^{\otimes T}(S_T\ge k)\leq\alpha$. Since the Bernoulli model is atomic, the deterministic level-$\alpha$ event also includes a subset of the tied boundary level $S_T=k-1$. We order boundary strings lexicographically with $1>0$, and include the first
$$r=\left\lfloor\frac{\alpha-\pi_0^{\otimes T}(S_T\ge k)}{p_0^{k-1}(1-p_0)^{T-k+1}}\right\rfloor\,.$$
This gives $k=8$ and $r=106$ for $T=10$, and $k=13$ and $r=102809$ for $T=20$.

For the resulting event $\Gamma^\star$, the Doob e-process is $U_t^\star=\pi_0^{\otimes T}(\Gamma^\star\mid x^t)/\pi_0^{\otimes T}(\Gamma^\star)$. Its stopping-time distribution under $\pi_1^{\otimes}$ is computed exactly, by propagating the $\pi_1$-mass of alive prefixes and removing a prefix as soon as it already forces membership in $\Gamma^\star$. The GRO curve is computed in the same way, using the likelihood-ratio wealth, whose log at a state with $s$ successes after $t$ rounds is $s\log(p_1/p_0)+(t-s)\log((1-p_1)/(1-p_0))$. The top panel aggregates the probability mass of $\tau$ into time bins, while the bottom panel shows the corresponding CDF under the alternative.

\newpage
\subsection[[Details for Figure 3]{Details for \Cref{fig:edo-approx}}\label{sec:appendix-fig-edo-approx}
\begin{center}
\includegraphics[width=\textwidth]{Plots/bern_approx.png}
\end{center}

\Cref{fig:edo-approx} compares EDO with a numerical Bellman solution in the Bernoulli case. We take $\pi_0=\Bernoulli(1/2)$, $\pi_1=\Bernoulli(2/3)$ and $\alpha=0.05$, and use the exponentially decaying reward $R(t)=e^{-t/T}$ with time scale $T=30$. The EDO action is the constant action $\pi_\star=\Bernoulli(p_\star)$ obtained from \Cref{prop:exp-reward-summary}. In this case $\eta_\star$ is computed numerically by solving $\eta_\star D_{1/(1-\eta_\star)}(\pi_1\|\pi_0)=1/T$, and $p_\star$ is then obtained from the closed-form Bernoulli tilt \eqref{eq:pstar}.

The Bellman curve is computed by finite-horizon backward induction on the rescaled wealth $z=\alpha w$. We use horizon $180$, plot rejection times up to $120$, and discretise the state and action spaces using $401$ wealth grid points and $401$ actions. The wealth grid is logarithmic on $[10^{-10},1]$. The Bellman policy uses action-level overshoot capping, meaning that if an outcome would already cross the boundary, its payoff is capped at the value needed to reach $z=1$. We compare it with the uncapped EDO action and with the same EDO action after applying this boundary-aware capping.

The stopping-time distributions are estimated under $\pi_1^{\otimes}$ using $300000$ Monte Carlo paths, with common random numbers across the three strategies. The top panel aggregates the probability mass of $\tau$ into time bins, while the bottom panel shows the corresponding CDF under the alternative.

\newpage
\subsection[Details for Figure 4]{Details for \Cref{fig:edo-gro}}\label{sec:appendix-fig-edo-gro}
\begin{center}
\includegraphics[width=\textwidth]{Plots/GROvsEDO.png}
\end{center}

\Cref{fig:edo-gro} compares GRO with the EDO strategy in the Gaussian case (no capping). We take $\pi_0=\N(0,1)$, $\pi_1=\N(1/4,1)$ and $\alpha=0.05$. GRO corresponds to the constant Gaussian-shift action $a=\mu=1/4$. For each time scale $T\in\{8,14,60\}$, EDO uses the constant action from \Cref{sec:app-gauss}, namely $a_\star(T)=\mu+2/(\mu T)$. Thus smaller time scales lead to larger shifts away from the null, favouring earlier rejection. As $T$ increases, $a_\star$ approaches the GRO action $\mu$, which is asymptotically optimal for minimising the first-order rejection-time scale as $\alpha\to0$. This is reflected in the CDFs: smaller time scales rise earlier but plateau below one, while the larger time scale more closely tracks GRO, which has the best large-time behaviour.

The stopping-time distributions are estimated under $\pi_1^{\otimes}$ using $300000$ Monte Carlo paths, with common random numbers across all strategies. The horizon is $250$, and rejection times are plotted up to $120$. No capping is applied. The top panel aggregates the probability mass of $\tau$ into time bins, while the bottom panel shows the corresponding CDF under the alternative.

The power effect is visible through \Cref{prop:power-constant-strategy}. For a constant Gaussian-shift action $a$, the log-growth under $\pi_1$ is $\gamma(a)=a\mu-a^2/2$. Hence power one holds exactly when $0<a\leq 2\mu$. For EDO this condition is equivalent to $T\geq 2/\mu^2$. In the present experiment, $2/\mu^2=32$, so the EDO curve with $T=60$ has power one, whereas the smaller time scales $T=8$ and $T=14$ trade eventual power for earlier rejection.

\newpage
\subsection[Details for Figure 5]{Details for \Cref{fig:power-bound-check}}\label{sec:appendix-fig-power-bound-check}
\begin{center}
\includegraphics[width=\textwidth]{Plots/power_proposition_bernoulli.png}
\end{center}

\Cref{fig:power-bound-check} illustrates the power statement in \Cref{prop:power-constant-strategy} for the Bernoulli EDO constant strategy. We take $\pi_0=\Bernoulli(0.5)$, $\pi_1=\Bernoulli(0.7)$ and $\alpha=0.05$. For each time scale $T$, we compute the EDO action by solving $\eta_\star D_{1/(1-\eta_\star)}(\pi_1\|\pi_0)=1/T$. This determines the constant e-variable $E_\star=d\pi_\star/d\pi_0$, which in the Bernoulli case is specified by its two values $E_\star(0)$ and $E_\star(1)$.

The drift under the alternative is $\gamma=(1-p_1)\log E_\star(0)+p_1\log E_\star(1)$. By \Cref{prop:power-constant-strategy}, the strategy has power one when $\gamma\geq0$. In this example the transition occurs at $T=\KL{\Bernoulli(0.7)}{\Bernoulli(0.5)}^{-1}$, where
$$\KL{\Bernoulli(0.7)}{\Bernoulli(0.5)}=0.7\log(1.4)+0.3\log(0.6)\,.$$
This threshold is marked by the vertical dotted line.

For $T$ below the threshold, the drift is negative. We then compute $\kappa>0$ solving $\E_{\pi_1}[E_\star(X)^\kappa]=1$, and plot the bounds from \Cref{prop:power-constant-strategy}, namely $(\alpha/B_\star)^\kappa$ and $\alpha^\kappa$, where $B_\star=\max\{E_\star(0),E_\star(1)\}$. The power itself is estimated under $\pi_1^{\otimes}$ by Monte Carlo, using $5000$ paths for each value of $T$ below the threshold. The error bars are Wilson confidence intervals for this Monte Carlo estimate, with the upper endpoint enlarged by the truncation bound used for paths stopped far below the rejection boundary. For $T$ above the threshold, power is set to one by the proposition.

\newpage
\section{Further discussion on the hard-deadline reward}\label{sec:appendix-hard}

We expand here on the hard-deadline construction of \Cref{prop:hard-deadline-np}. In the simple-vs-simple case, the reward $R(t)=\one_{\crl{t\leq T}}$ \Cref{prop:hard-deadline-np} reduces the problem to a fixed-horizon Neyman--Pearson problem at time $T$. Namely, one first chooses a terminal event $\Gamma^\star\subseteq\X^T$ satisfying $\pi_0^\otimes(\Gamma^\star)\leq\alpha$ and maximising $\pi_1^\otimes(\Gamma)$ among all terminal events $\Gamma\subseteq\X^T$ with $\pi_0^\otimes(\Gamma)\leq\alpha$. The corresponding sequential test is then obtained by inducing this terminal event through the Doob martingale $$U_t^\star(x^t)=\pi_0^\otimes(\Gamma^\star\mid x^t)/\pi_0^\otimes(\Gamma^\star)\,.$$ Thus the exact stopping rule is $$\pi_0^\otimes(\Gamma^\star\mid x^t)\geq \pi_0^\otimes(\Gamma^\star)/\alpha\,.$$ This is the sense in which the hard-deadline optimum has a terminal character: the event being optimised lives at time $T$, and the sequential test rejects earlier only if the observations so far make this terminal event sufficiently likely under the null.

When $\pi_0^{\otimes T}$ is non-atomic, one can choose the Neyman--Pearson set $\Gamma^\star$ so that $\pi_0^{\otimes T}(\Gamma^\star)=\alpha$: since there are no atoms, the likelihood-ratio rejection region can be adjusted to have exactly the desired null probability. In this case $U_T^\star=\alpha^{-1}\one_{\Gamma^\star}$, so the induced Doob test rejects at the deadline exactly on the fixed-sample Neyman--Pearson event. Before the deadline, the stopping rule becomes $\pi_0^\otimes(\Gamma^\star\mid x^t)\geq 1$, and hence early rejection is possible only if the observations $x^t$ already imply that the final path will belong to $\Gamma^\star$, whatever the remaining observations may be.

This phenomenon becomes very explicity for Gaussian mean testing, where rejection before the deadline cannot happen, as discussed in \Cref{sec:examples}. Let $\pi_0=\N(0,\sigma^2)$ and $\pi_1=\N(\mu,\sigma^2)$, with $\mu>0$. Then the fixed-horizon Neyman--Pearson region is $\Gamma^\star=\crl{S_T\geq c}$, where $S_T=\sum_{t=1}^T x_t$ and $c$ is chosen so that $\pi_0^\otimes(S_T\geq c)=\alpha$. For every $t<T$, conditional on $x^t$, the remaining sum $S_T-S_t$ has full support on $\R$ under $\pi_0^\otimes$. Therefore $\pi_0^\otimes(S_T\geq c\mid x^t)<1$ for every finite history $x^t$. Thus the exact Gaussian hard-deadline test never rejects before the deadline: it rejects at $T$ precisely on $\Gamma^\star$, and hence coincides with the fixed-sample Neyman--Pearson test.

In Bernoulli testing, by contrast, the fixed-horizon problem is atomic. Since the likelihood ratio is increasing in $S_T$, the Neyman--Pearson ordering favours paths with more successes. If randomisation were allowed, this would give the usual upper-tail rule, with randomisation at a boundary value of $S_T$. For deterministic events, however, the atoms cannot be split arbitrarily, and the exact set $\Gamma^\star$ is obtained by a finite integer optimisation over paths, or equivalently over the numbers of strings selected at each value of $S_T$. Except for special values of $\alpha$ for which the desired level is attained exactly by a union of atoms, such a deterministic choice may satisfy $\pi_0^\otimes(\Gamma^\star)<\alpha$. This creates a minor distinction between scaling the Doob martingale by $\pi_0^\otimes(\Gamma^\star)$ and scaling it by $\alpha$. In either case, since future observations are bounded, sufficiently favourable observations before time $T$ can force membership in the terminal event, and hence allow earlier rejection. Nevertheless, the hard-deadline objective still assigns the same reward to all rejection times before $T$, and so does not itself favour rejection substantially earlier than the deadline. Consequently, even in Bernoulli examples the induced Doob test can remain conservative until late in the horizon. This is the phenomenon referred to in \Cref{sec:hard}, and is one motivation for considering smoother rewards, such as the exponential-decay reward of \Cref{sec:exponential-reward}.

\Cref{fig:hard_deadline_bernoulli_comparison} compares the exact hard-deadline Doob construction with several betting rules in the Bernoulli example $\pi_0=\Bernoulli(0.4)$, $\pi_1=\Bernoulli(0.6)$ and $\alpha=0.05$. The Doob curve is computed exactly from the deterministic Neyman--Pearson event with lexicographic tie-breaking, using the stopping rule $U_t^\star\geq\alpha^{-1}$, equivalently $\pi_0^\otimes(\Gamma^\star\mid x^t)\geq \pi_0^\otimes(\Gamma^\star)/\alpha$. As expected from its terminal-power objective, this curve is the most conservative among the horizon-aware hard-deadline procedures: rejection mass is pushed towards the deadline. At the same time, it attains the largest probability of rejection by the deadline, as guaranteed by \Cref{prop:hard-deadline-np}. STaR-Bets from \cite{voravcek2025star} performs particularly well in this example, achieving substantially earlier rejection while remaining close to the Doob benchmark near the deadline, and typically outperforming the method from \cite{taga2026learning}.

The comparison between GRO and EDO isolates the effect of changing the constant betting rule. GRO plays the likelihood-ratio bet, equivalently $\Bernoulli(p_1)$, at every round. EDO instead plays the uncapped Bernoulli tilt $\Bernoulli(p^\star)$ obtained from the exponential-decay criterion with time scale equal to the plotted horizon. For the shorter horizons, EDO outperforms GRO, betting more aggressively and achieving earlier rejection. For $T=100$, the two curves are nearly indistinguishable, in line with the fact that EDO recovers GRO as the time scale grows.

For EDO, $p^\star$ is obtained as described in \Cref{sec:app-examples}. STaR-Bets is simulated under $\pi_1^\otimes$ using the finite-horizon target-recalculating rule of \citet{voravcek2025star}, with the empirical second-moment proxy regularised by the horizon-dependent bonus and clipped at $p_0(1-p_0)$ as in the paper's implementation. The Taga et al.\ curve is a simulated horizon-aware betting heuristic in the spirit of \citet{taga2026learning}: we use the empirical Kelly stake as a baseline, mix it with the endpoint stake according to six linear or quadratic schedules with onsets $0.25T,0.5T,0.75T$, and reject using the average wealth across these schedules. Both simulated curves use $20000$ Monte Carlo paths from $\pi^\otimes_1$ and the same random seed across horizons.

\begin{figure}[!t]
    \centering
    \includegraphics[width=\textwidth]{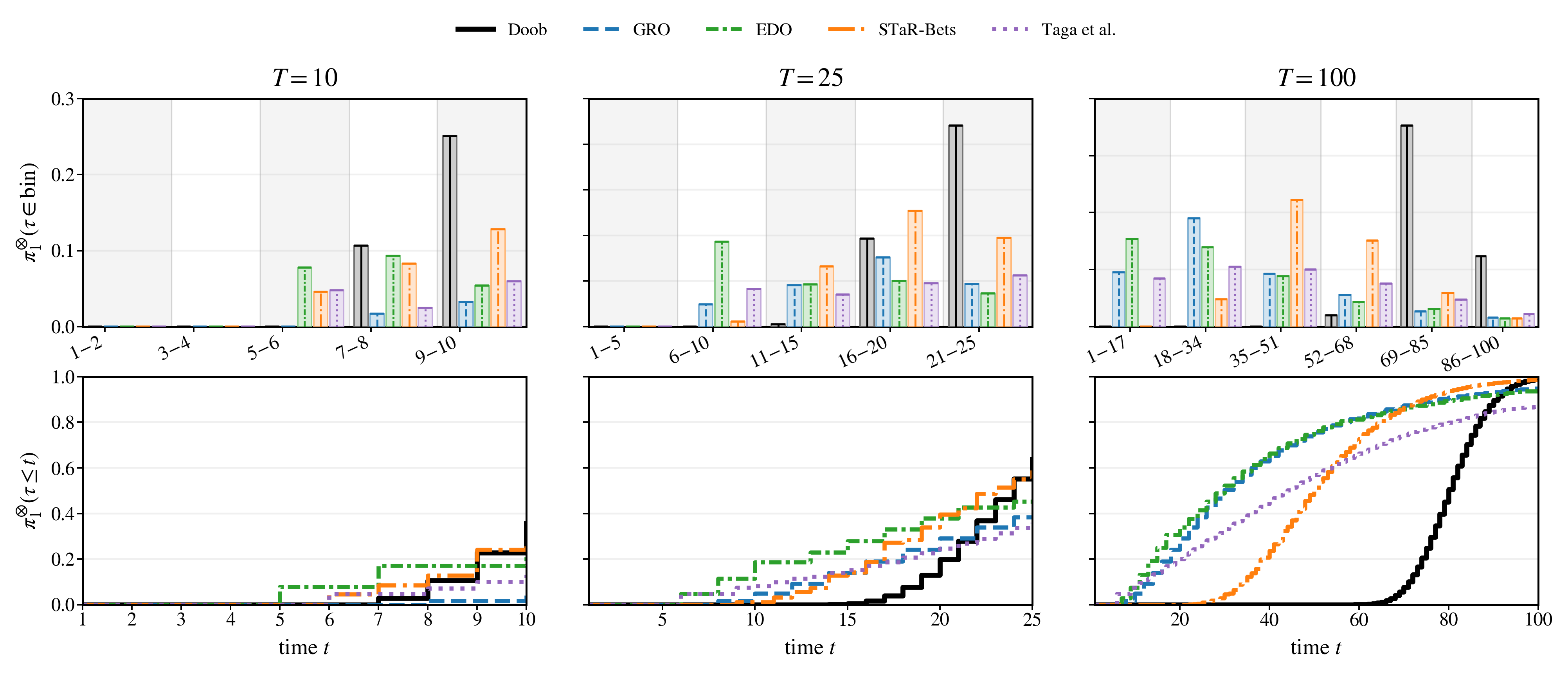}
    \caption{Rejection-time distributions for $\pi_0=\Bernoulli(0.4)$ and $\pi_1=\Bernoulli(0.6)$, with $\alpha=0.05$, for deadlines $T=10,25,100$. The top row shows coarse-binned rejection probabilities and the bottom row shows the corresponding cdfs. The Doob curve is the exact hard-deadline Doob/Neyman--Pearson benchmark, GRO and EDO are  constant-bet strategies, and STaR-Bets and Taga et al.\ are simulated horizon-aware baselines.}
    \label{fig:hard_deadline_bernoulli_comparison}
\end{figure}

\end{document}